\title[Frobenius and Spherical \CODs and \NBHDs]{Frobenius and Spherical\\ \CODs and \NBHDs}
\author{Andreas Hochenegger}
\author{Ciaran Meachan}
\subjclass[2010]{Primary 18A40; Secondary 18E30, 14F05, 16E35}
\keywords{exact functor with both adjoints; Frobenius functor;
          spherical functor; fully faithful functor;
          spherical subcategory; spherelike functor;
          spherelike object; thick subcategory}
\begin{document}
\maketitle

\begin{abstract}
Given an exact functor between triangulated categories which admits both adjoints and whose cotwist is either zero or an autoequivalence, 
we show how to associate a unique full triangulated subcategory of the codomain on which the functor becomes either Frobenius or spherical, respectively. 
We illustrate our construction with examples coming from projective bundles and smooth blowups.
This work generalises results about spherical subcategories obtained by Martin Kalck, David Ploog and the first author.
\end{abstract}

\tableofcontents


\addtocontents{toc}{\protect\setcounter{tocdepth}{1}}     

\section{Introduction}
In this article we will study exact functors $\F \colon \cA \to \cB$ between (suitably enhanced) triangulated categories which admit both a left adjoint $\L$ and a right adjoint $\R$. 
Using the unit $\eta$ and counit $\eps$ of adjunction $\F\dashv\R$, one can associate two natural endofunctors to $\F$, namely the cotwist $\C$ and the twist $\T$, which fit into the triangles:
\[
\C \to \id_\cA \xra\eta \R\F
\qquad \text{and} \qquad
\F\R \xra\eps \id_\cB \to \T.
\]
These endofunctors are ubiquitous in nature because:
\begin{quote}
\emph{``adjoint functors arise everywhere''.} \qquad (Saunders Mac Lane)
\end{quote}
In this paper, we will focus on the two most fundamental cases for the cotwist:
\begin{enumerate}
\item $\C = 0$, which is equivalent to $\F$ being fully faithful;\\
we call a fully faithful functor with both adjoints \emph{exceptional};
\item $\C$ is an autoequivalence, in which case we call $\F$ \emph{spherelike}.
\end{enumerate}
At this point, we want to offer up an extension to Mac Lane's famous slogan above with the following imperative, which will act as our guiding principle throughout: 
\begin{quote}
\emph{``if a functor admits both adjoints then compare them!''}
\end{quote}
In particular, for the two fundamental cases described above, we have canonical natural transformations between $\R$ and $\L$, namely:
\begin{enumerate}
\item $\phi \colon \R \to \R\F\L \xla\sim \L$, if $\F$ is exceptional;
\item $\phi \colon \R \to \R\F\L \to \C\L[1]$, if $\F$ is spherelike.
\end{enumerate}
Thus, a natural comparison question is whether $\phi$ is an isomorphism in either case? 
If $\phi$ is an isomorphism then we recover the well-established notions of $\F$ being:
\begin{enumerate}
\item \emph{exceptionally Frobenius} in the exceptional case;
\item \emph{spherical} (
or \emph{quasi-Frobenius}) in the spherelike case.
\end{enumerate}
However, if $\phi$ is \emph{not} an isomorphism then one can complete $\phi$ to a triangle of functors and use the cocones to measure how far away $\F$ is from being (quasi-)Frobenius:
\begin{enumerate}
\item if $\F$ is exceptional then we have a triangle $\P \to \R \to \L$,\\ 
and we call $\Frb\F \coloneqq \ker\P\subset\cB$ the \emph{Frobenius \cod} of $\F$;
\item if $\F$ is spherelike then we have a triangle $\Q \to \R \to \C\L[1]$,\\
and we call $\Sph\F \coloneqq \ker\Q\subset\cB$ the \emph{spherical \cod} of $\F$.
\end{enumerate}

\begin{theoremalpha}
\label{mainthm:global}
Let $\F \colon \cA \to \cB$ be an exceptional or spherelike functor and let $\cB_\F$ be the Frobenius or spherical \cod, respectively. 
Then $\im\F\subset\cB_\F$ and the corestriction $\F|^{\cB_\F} \colon \cA \to \cB_\F$ is exceptionally Frobenius or spherical, respectively. 
Moreover, $\cB_\F$ is the maximal full triangulated subcategory of $\cB$ with this property. 
\end{theoremalpha}

This theorem is the main result of \autoref{sec:frobenius-global} and \autoref{sec:spherelike-global}, respectively.
There is a local version of these \cods for objects $\F A \in \cB$, where $A\in\cA$ is some object in the source category. For simplicity, we assume that $\cA$ and $\cB$ admit Serre functors $\S_\cA$ and $\S_\cB$, respectively. The local statements are as follows: 
\begin{enumerate}
\item if $\F$ is exceptional then $\P \to \R \to \L$ becomes $\F\S_\cA \to \S_\cB\F \to \T\S_\cB\F$,\\
and we call $\FrbO\F A\coloneqq\lorth \T\S_\cB\F A$ the \emph{Frobenius \nbhd} of $\F A\in\cB$;
\item if $\F$ is spherelike then $\Q \to \R \to \C\L[1]$ becomes $\F\S_\cA\C^{-1}[-1] \to \S_\cB\F \to \Q^r\S_\cA$,\\
and we call $\SphO\F A\coloneqq\lorth \Q^r\S_\cA A$ the \emph{spherical \nbhd} of $\F A\in\cB$.
\end{enumerate}
Here we denote by $\Q^r$ the right adjoint of $\Q$.

\begin{theoremalpha}
\label{mainthm:local}
Let $\F \colon \cA \to \cB$ be an exceptional or spherelike functor and let $\cB_{\F A}$ be the Frobenius or spherical \nbhd of $\F A \in \cB$, for some $A\in\cA$.
Then, inside $\cB_{\F A}$, the Serre dual of $\F A$ is given by $\F \S_\cA A$ or $\F\S_\cA\C^{-1}[-1] A$, respectively.
Moreover, $\cB_{\F A}$ is the maximal full triangulated subcategory of $\cB$ with this property.
\end{theoremalpha}

This theorem is proven in \autoref{sec:frobenius-local} and \autoref{sec:spherelike-local}.
These neighbourhoods can be put into a set, which is ordered by inclusion, thus yielding the \emph{Frobenius} or \emph{spherical poset} of an exceptional or spherelike functor, respectively.

The symmetrical nature of $\C$ and $\T$ means that we could also consider the fundamental cases of when $\T$ is zero or $\T$ is an equivalence.
The dual nature of these constructions might lead us to name the corresponding functors \emph{coexceptional} and \emph{cospherelike}, respectively, and it is easy to see how we would obtain analogous results to that of \autoref{mainthm:global} and \autoref{mainthm:local}.

We illustrate the theory by several examples. On the exceptional side, we study exceptional functors coming from projective bundles and blowups. 
We highlight \autoref{prop:rational-curve-threefold} of blowing up a $\bbP^1$ on a threefold $\pi \colon \Bl_{\bbP^1}(X) \to X$. There we can determine the Frobenius poset of the exceptional functor $\pi^*$: it encodes the poset of thick subcategories of $\Db(\bbP^1)$.
Additionally, we show that in case of hypersurfaces of degree $n$ in $\bbP^{2n-1}$, the linkage class appears actually as the triangle associated to an exceptional functor.
On the spherelike side, we obtain a wealth of examples by \autoref{prop:sph-frb}: the composition of a spherical functor $\F_1$ and an exceptional functor $\F_2$ gives a spherelike functor $\F_2\F_1$ and its spherical neighbourhoods can be expressed as Frobenius neighbourhoods of $\F_1$. 
Currently, this is the only way we know how to build spherelike functors. It would be interesting to find examples which are not of this shape.

This article grew out of an attempt to generalise the notion of spherelike objects, as introduced in \cite{hochenegger2016spherical, hochenegger2019spherical}, to spherelike functors; see \autoref{sec:spherelike-compare} for a detailed comparison. Whilst building up the theory, we realised that central statements and examples in loc.\ cit.\ are about embedding spherical objects by an exceptional functor, and thus they are actually statements about Frobenius neighbourhoods rather than spherical neighbourhoods; see \autoref{prop:sphsubcat-frbnbhd} and the examples thereafter.


\subsection*{Conventions}
Throughout, all categories will be triangulated and linear over an algebraically closed field $\field$. In particular, all subcategories will be triangulated.
Additionally, we will often implicitly assume that the triangulated categories admit an enhancement, in order to speak about triangles of functors.
The shift functor will be denoted by $[1]$ and all triangles will be exact.
We write $A \to B \to C$ for an (exact) triangle, suppressing the degree increasing map $C \to A[1]$.
Finally, all functors will be exact. In particular, we will denote derived functors with the same symbol as its (non-exact) counterpart on the abelian level. For example, for a proper morphism $\pi \colon X \to Y$, we write $\pi_* \colon \Db(X) \to \Db(Y)$ for the derived pushforward. 
Dualisation over $\field$ is given by $(\blank)^\vee \coloneqq \Hom(\blank,\field)$ and we use $\Hom^*(A,B)$ to mean the graded $\field$-vector space $\bigoplus_i \Hom^*(A,B[i])[-i]$, which can also be considered as a complex with zero differential in $\Db(\field\mod)$.


\subsection*{Acknowledgements}
We thank Greg Stevenson for illuminating discussions. We are also grateful to 
Pieter Belmans,
Andreas Krug, 
Sasha Kuznetsov, 
and 
Theo Raedschelders
for many helpful comments.

\section{Preliminaries}
\label{sec:preliminaries}
In this section, we collect some standard facts as well as detailing the terminology and notation that we will use throughout the article.

\subsection{Generating triangulated subcategories}
\label{sec:generation}

Recall that all categories are assumed to be triangulated, unless stated otherwise.

\begin{definition}
A subcategory $\cC$ of $\cA$ is called \emph{thick} if it is full and closed under direct summands, i.e.\ if $C \oplus C' \in \cC$ then $C, C' \in \cC$ as well.

For an arbitrary family $\cF$ of objects in $\cA$, the \emph{thick closure} of $\cF$ is the smallest thick subcategory of $\cA$ containing $\cF$ and will be denoted by $\thick\cF$.
\end{definition}

\begin{definition}
Let $\cF$ be an arbitrary family of objects in $\cA$.
Then the \emph{right orthogonal} of $\cF$ is
\[
\cF^\perp \coloneqq \{ A \in \cA \mid \Hom^*(F,A) = 0 \text{ for all $F \in \cF$} \}.
\]
Likewise, the \emph{left orthogonal} of $\cF$ is
\[
\lorth \cF \coloneqq \{ A \in \cA \mid \Hom^*(A,F) = 0 \text{ for all $F \in \cF$} \}.
\]
\end{definition}

\begin{remark}
The full subcategory of $\cA$ with objects in $\cF^\perp$ is automatically triangulated and thick.
The same holds true for $\lorth \cF$.
For this reason, we will in the following identify $\cF^\perp$ and $\lorth \cF$ with the corresponding (full) subcategories of $\cA$.
\end{remark}

\begin{definition}\label{def:generators}
An object $A$ of $\cA$ is said to be:
\begin{itemize}
\item a \emph{weak generator} of $\cA$ if $A^\perp = 0$; 
\item a \emph{classical generator} of $\cA$ if $\cA = \thick A$.
\end{itemize}
\end{definition}

\begin{remark}
Note that if $A$ is a direct sum of exceptional objects, then both notions of weak and classical generator are equivalent.
A classical generator is always a weak generator, but the converse implication does not hold in general.
\end{remark}

\begin{example}
\label{exa:stronggen}
If $X$ is a smooth projective variety and $\cL$ a very ample line bundle, then $A = \cO_X \oplus \cL \oplus \cdots \oplus \cL^{\otimes \dim(X)}$ is a classical generator of $\Db(X)$, see \cite[Thm. 4]{orlov2009generators}.
\end{example}

\begin{definition}
A pair of full subcategories $(\cA, \cB)$ of a triangulated category $\cD$ is said to be a \emph{semiorthogonal decomposition} if
\begin{itemize}
\item $\Hom^*(B,A) = 0$ for all $A \in \cA$ and $B \in \cB$; 
\item for all $D \in \cD$ there is an exact triangle
\[ 
D_{\cB} \to D \to D_{\cA}
\]
with $D_{\cA} \in \cA$ and $D_{\cB} \in \cB$.
\end{itemize}
We denote a semiorthogonal decomposition by $\cD = \sod{\cA,\cB}$.
\end{definition}

The following statements about semiorthogonal decompositions are standard and can be found, for example, in \cite{bondal1989representations} or \cite{kuznetsov2015derived}.

\begin{proposition}
\label{prop:decomptriangles}
If $\cD = \sod{\cA,\cB}$ is a semiorthogonal decomposition then the assignments $D \mapsto D_{\cA}$ and $D \mapsto D_{\cB}$ are functorial in $D$ and define left and right adjoints to the inclusions $\cA \to \cD$ and $\cB \to \cD$, respectively.
Moreover, we have $\cA = \cB^\perp$ and $\cB = \lorth \cA$.
\end{proposition}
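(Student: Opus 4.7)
The plan is to extract everything from the single decomposition triangle $D_\cB \to D \to D_\cA$ by repeatedly exploiting the vanishing $\Hom^*(B,A)=0$ for $A\in\cA$, $B\in\cB$. The first step is the observation, used everywhere afterwards, that this decomposition triangle is essentially unique: given $D\in\cD$, applying $\Hom(D_\cB,-)$ to the triangle yields $\Hom(D_\cB,D_\cA[i])=0$ for all $i$, so the long exact sequence forces $\Hom(D_\cB,D_\cB)\isoto\Hom(D_\cB,D)$. Any second decomposition triangle $D'_\cB\to D\to D'_\cA$ produces, by the same argument applied in the other variable, a unique pair of isomorphisms matching the two triangles up to unique isomorphism.

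For functoriality, given $f\colon D\to D'$, I would consider the composition $D_\cB\to D\xra{f}D'\to D'_\cA$. Since $D_\cB\in\cB$ and $D'_\cA\in\cA$, semiorthogonality makes this composite zero; the octahedral axiom (or the standard triangulated lifting argument) then gives a morphism $f_\cB\colon D_\cB\to D'_\cB$ making the square commute, and uniqueness of this lift follows from $\Hom(D_\cB,D'_\cA[-1])=0$. Completing the morphism of triangles yields $f_\cA\colon D_\cA\to D'_\cA$, again unique for the same reason, which shows that $D\mapsto D_\cA$ and $D\mapsto D_\cB$ are well-defined functors.

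For the adjunctions, I would apply $\Hom(-,A)$ with $A\in\cA$ to the triangle $D_\cB\to D\to D_\cA$; since $\Hom^*(D_\cB,A)=0$, this yields $\Hom_\cD(D,A)\cong\Hom_\cA(D_\cA,A)$ naturally in both variables, exhibiting $D\mapsto D_\cA$ as left adjoint to the inclusion $\cA\hookrightarrow\cD$. Dually, applying $\Hom(B,-)$ for $B\in\cB$ and using $\Hom^*(B,D_\cA)=0$ gives $\Hom_\cD(B,D)\cong\Hom_\cB(B,D_\cB)$, so $D\mapsto D_\cB$ is the right adjoint of $\cB\hookrightarrow\cD$.

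Finally, for the orthogonality identities, the inclusions $\cA\subset\cB^\perp$ and $\cB\subset\lorth\cA$ are immediate from semiorthogonality. For the reverse inclusions, suppose $D\in\cB^\perp$; applying $\Hom(D_\cB,-)$ to the decomposition triangle kills both of the neighbours of $\Hom(D_\cB,D_\cB)$ in the long exact sequence, forcing $\id_{D_\cB}=0$, hence $D_\cB=0$ and $D\cong D_\cA\in\cA$. The assertion $\lorth\cA\subset\cB$ is dual, obtained by applying $\Hom(-,D_\cA)$ and concluding $D_\cA=0$. The only place where care is needed is the uniqueness and functoriality of the lift in the second paragraph, where one must track that the obstruction and ambiguity groups both vanish by semiorthogonality; once this bookkeeping is done the rest is a direct application of the long exact Hom sequence.
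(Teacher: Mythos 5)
Your argument is correct and is exactly the standard one: the paper itself gives no proof of this proposition, labelling it as standard and citing Bondal--Orlov and Kuznetsov, and what you have written is the argument found in those references (uniqueness and functoriality of the decomposition triangle via the vanishing of the obstruction and ambiguity groups $\Hom^*(B,A)$, the adjunctions from the long exact Hom sequence, and the orthogonality identities by forcing $\id_{D_\cB}=0$ or $\id_{D_\cA}=0$). All the vanishing claims you invoke do hold, so there is nothing to add.
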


\begin{definition}
\label{def:admissible}
Let $\cA$ be a full subcategory of $\cD$. Then $\cA$ is called
\begin{itemize}
\item \emph{right admissible} if the inclusion functor $\cA\hra\cB$ has a right adjoint;
\item \emph{left admissible} if the inclusion functor admits a left adjoint;
\item \emph{admissible} if it is both left and right admissible. 
\end{itemize}
\end{definition}

\begin{proposition}
Let $\cA$ be a left admissible subcategory of $\cD$.
Then $\lorth \cA$ is right admissible and $\cD = \sod{\cA, \lorth \cA}$ is a semiorthogonal decomposition.
\end{proposition}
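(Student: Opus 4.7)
The plan is to use the left adjoint $i^*$ of the inclusion $i\colon\cA\hookrightarrow\cD$ to construct, for each $D\in\cD$, a triangle of the form required by the definition of a semiorthogonal decomposition, and then to invoke \autoref{prop:decomptriangles} to deduce right admissibility of $\lorth\cA$.

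Concretely, I would start from the unit $\eta_D\colon D\to ii^*D$ of the adjunction $i^*\dashv i$ and complete it to an exact triangle
\[
D_\perp \to D \xra{\eta_D} ii^*D.
\]
The right-hand term lies in $\cA$ by construction, and the central claim is that $D_\perp\in\lorth\cA$. To verify this I fix $A\in\cA$, apply $\Hom_\cD(\blank,iA)$ and its shifts to the triangle, and show that precomposition with $\eta_D$ yields an isomorphism $\Hom_\cD(ii^*D,iA)\xra\sim\Hom_\cD(D,iA)$; the associated long exact sequence then forces $\Hom^*_\cD(D_\perp,iA)=0$. Combined with the tautological orthogonality $\Hom^*(B,A)=0$ for $B\in\lorth\cA$ and $A\in\cA$, this exhibits $\cD=\sod{\cA,\lorth\cA}$ as a semiorthogonal decomposition. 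Then \autoref{prop:decomptriangles} automatically provides the right adjoint $D\mapsto D_\perp$ to the inclusion $\lorth\cA\hookrightarrow\cD$, establishing that $\lorth\cA$ is right admissible.

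The only step that demands real care is the iso $\Hom_\cD(ii^*D,iA)\xra\sim\Hom_\cD(D,iA)$. Both sides are canonically identified with $\Hom_\cA(i^*D,A)$: the left one by fully faithfulness of $i$ (equivalently, by invertibility of the counit $\eps\colon i^*i\to\id_\cA$), the right one via the adjunction $i^*\dashv i$. Precomposition with $\eta_D$ is precisely the composite of these two identifications, as follows from the triangle identity $\eps_{i^*D}\circ i^*\eta_D=\id$. This is a formal unwinding, but worth doing explicitly to keep track of the direction of the map; once this compatibility is in hand, the rest of the argument is essentially automatic and the statement drops out of \autoref{prop:decomptriangles}.
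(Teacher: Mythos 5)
Your proof is correct and is the standard argument: the paper states this proposition without proof (deferring to Bondal--Kapranov and Kuznetsov), and those sources proceed exactly as you do, completing the unit of $i^*\dashv i$ to a triangle, using the adjunction isomorphism $\Hom_\cD(ii^*D,iA)\cong\Hom_\cA(i^*D,A)\cong\Hom_\cD(D,iA)$ to place the cocone in $\lorth\cA$, and then extracting the right adjoint from \autoref{prop:decomptriangles}. The compatibility check via the triangle identity $\eps_{i^*D}\circ i^*\eta_D=\id$ that you flag is indeed the only point requiring care, and you have handled it correctly.
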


We can iterate the definition of semiorthogonal decompositions.

\begin{definition}
\label{dfn:sod}
A sequence $(\cA_1,\ldots,\cA_n)$ of full subcategories in $\cD$ is called \emph{semiorthogonal decomposition} if 
$\cA_n$ is right admissible in $\cD$ and
$\sod{\cA_1,\ldots,\cA_{n-1}}$ is a semiorthogonal decomposition of $\cA_n^\perp$.
In this case, we write $\cD = \sod{\cA_1,\ldots,\cA_n}$.
\end{definition}

\begin{remark}
By this definition, $\cD$ decomposes into a nested semiorthogonal decomposition:
\[
\cD = \sod{\cA_1,\ldots,\cA_n} = \sod{\sod{\ldots \sod{\cA_1,\cA_2}, \ldots }, \cA_n}.
\]
Actually, one can check that the order of the nesting does not matter, since we have $\cA_i = \lorth \sod{\cA_1,\ldots,\cA_{i-1}} \cap \sod{\cA_{i+1},\ldots,\cA_n}^\perp$.
Moreover, note that $\cA_1$ is left admissible in $\cD$ (and $\cA_n$ right admissible), whereas for the terms in between we cannot make a general statement about left or right admissibility in $\cD$.
\end{remark}

\begin{remark}
For a semiorthogonal decomposition $\cD = \sod{\cA_1,\ldots,\cA_n}$, it is often assumed in the literature that all $\cA_i$ are admissible in $\cD$, and the definition we gave above is sometimes called a \emph{weak} semiorthogonal decomposition.

In the presence of a Serre functor of $\cD$, a left or right admissible subcategory of $\cD$ is automatically admissible.
A particular consequence of this is that all terms of a (weak) semiorthogonal decomposition become admissible.
That is, if we have the luxury of Serre functors 
then there is no difference between the two notions.
\end{remark}

\subsection{Serre duality}
We recall some basic facts about Serre duality, all of which can be found, for example, in \cite{bondal1989representable} or \cite{huybrechts2006fourier}.

\begin{definition}
\label{def:serre-dual}
Let $A$ be an object in a triangulated category $\cA$.
An object $\S A \in \cA$ is called a \emph{Serre dual} of $A$ if it represents the functor $\Hom(A,\blank)^\vee$. Moreover, $A$ is called \emph{$d$-Calabi-Yau} if $A[d]$ is a Serre dual for $A$.

We say that $\S \colon \cA \to \cA$ is a \emph{Serre functor} of $\cA$ if $\S$ is an equivalence and $\S A$ is a Serre dual for all $A \in \cA$, i.e.\ there is an isomorphism
\[
\Hom(A,B) \cong \Hom(B, \S A)^\vee
\]
which is natural in $A,B \in \cA$.
Finally, if $\S = [d]$ for some $d\in\bbZ$, then we say that $\cA$ is a \emph{$d$-Calabi-Yau} category.
\end{definition}

\begin{proposition}
Let $\cA$ be a right admissible subcategory of $\cB$, and $\S_{\cB}$ be a Serre functor of $\cB$.
If $\ii^r \colon \cB \to \cA$ is the right adjoint of the inclusion $\ii \colon \cA \to \cB$ then
$\ii^r \S_{\cB} \ii$ is a Serre functor of $\cA$. 
\end{proposition}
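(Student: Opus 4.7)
The plan is to verify the Serre duality isomorphism directly by chaining together three standard natural isomorphisms, and then invoke the fact that a functor implementing Serre duality object-wise is automatically an equivalence.

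First, I would set $\S_\cA := \ii^r \S_\cB \ii$ and, for $A, B \in \cA$, compute
\[
\Hom_\cA(B, \S_\cA A) \;=\; \Hom_\cA(B, \ii^r \S_\cB \ii A) \;\cong\; \Hom_\cB(\ii B, \S_\cB \ii A),
\]
where the last step uses the adjunction $\ii \dashv \ii^r$. Then Serre duality in $\cB$ gives
\[
\Hom_\cB(\ii B, \S_\cB \ii A) \;\cong\; \Hom_\cB(\ii A, \ii B)^\vee,
\]
and finally the fully faithfulness of the inclusion $\ii$ (which holds because $\cA$ is a full subcategory of $\cB$) yields
\[
\Hom_\cB(\ii A, \ii B)^\vee \;\cong\; \Hom_\cA(A, B)^\vee.
\]
Concatenating these and dualising produces the required natural isomorphism $\Hom_\cA(A,B) \cong \Hom_\cA(B, \S_\cA A)^\vee$. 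Naturality in both arguments is automatic since each of the three isomorphisms is natural.

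It remains to show $\S_\cA$ is an equivalence. The slick way is to invoke the standard fact (Bondal–Kapranov) that any $\field$-linear functor satisfying the Serre duality formula functorially in both variables is automatically an equivalence: indeed, $\S_\cA$ represents the functor $\Hom_\cA(A,\blank)^\vee$ for each $A$, so it is determined up to unique natural isomorphism and is essentially surjective and fully faithful by Yoneda-type arguments. Alternatively, one can exhibit a quasi-inverse directly: using the left adjoint $\ii^l$ to $\ii$ (which exists because $\cB$ has a Serre functor, making right admissibility equivalent to admissibility) and the inverse Serre functor $\S_\cB^{-1}$, the functor $\ii^l \S_\cB^{-1} \ii$ is a quasi-inverse, as one checks by the symmetric chain of isomorphisms.

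The only subtle point is the last step, namely the equivalence property, but since the existence of admissible left and right adjoints to $\ii$ is guaranteed by the presence of the Serre functor on $\cB$ (as noted in the preceding remark on semiorthogonal decompositions), no real obstacle arises. The bulk of the proof is the two-line manipulation with adjunctions and Serre duality in $\cB$.
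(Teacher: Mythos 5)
Your chain of isomorphisms
$\Hom_\cA(B,\ii^r\S_\cB\ii A)\cong\Hom_\cB(\ii B,\S_\cB\ii A)\cong\Hom_\cB(\ii A,\ii B)^\vee\cong\Hom_\cA(A,B)^\vee$
is exactly the standard argument; the paper itself offers no proof of this proposition, recalling it from \cite{bondal1989representable} and \cite{huybrechts2006fourier}, and your computation is what those references do. The only soft spot is the ``slick'' justification of the equivalence step: the Serre-duality formula alone gives fully faithfulness of $\S_\cA$ by a (standard but not entirely Yoneda-trivial) computation, whereas essential surjectivity genuinely requires corepresentability of $\Hom_\cA(\blank,A)^\vee$, i.e.\ an inverse Serre functor. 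Your second route supplies precisely this: $\ii^l\S_\cB^{-1}\ii$ corepresents $\Hom_\cA(\blank,A)^\vee$ by the symmetric chain, and then $\Hom_\cA(G\S_\cA A,B)\cong\Hom_\cA(B,\S_\cA A)^\vee\cong\Hom_\cA(A,B)$ shows it is a quasi-inverse by Yoneda. This does lean on the existence of $\ii^l$, i.e.\ on the fact that a right admissible subcategory of a category with Serre functor is admissible---another standard fact the paper records only as a remark---so you are trading one recalled fact for another, but the argument is correct and non-circular as written.
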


\begin{proposition}
Let $\F \colon \cA \to \cB$ be a functor between categories that admit Serre functors $\S_{\cA}$ and $\S_{\cB}$, respectively.
If $\L\dashv \F$ then $\F\dashv \S_{\cA} \L \S_{\cB}^{-1}$.
Similarly, if $\F\dashv \R$ then $\S_{\cA}^{-1} \R \S_{\cB}\dashv \F$.
\end{proposition}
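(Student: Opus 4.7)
The plan is to unwind both claims by chaining together four natural isomorphisms: two instances of Serre duality (one in each category) and one instance of the assumed adjunction. Since uniqueness of adjoints up to natural isomorphism means it suffices to produce a natural bijection on Hom-spaces, I would simply write out the chain.

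For the first statement, assume $\L \dashv \F$ and take objects $A \in \cA$, $B \in \cB$. The plan is to compute
\[
\Hom_\cB(\F A, B) \cong \Hom_\cB(\F A, \S_\cB \S_\cB^{-1} B) \cong \Hom_\cB(\S_\cB^{-1} B, \F A)^\vee
\]
using the Serre functor of $\cB$, then apply the hypothesis $\L \dashv \F$ to rewrite the right-hand side as $\Hom_\cA(\L \S_\cB^{-1} B, A)^\vee$, and finally invoke the Serre functor of $\cA$ to identify this with $\Hom_\cA(A, \S_\cA \L \S_\cB^{-1} B)$. Each isomorphism in the chain is bifunctorial, so the composite is natural in $A$ and $B$, and this identifies $\S_\cA \L \S_\cB^{-1}$ as the right adjoint to $\F$.

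The second statement is entirely symmetric: assuming $\F \dashv \R$, I would proceed via
\[
\Hom_\cA(\S_\cA^{-1} \R \S_\cB B, A) \cong \Hom_\cA(A, \R \S_\cB B)^\vee \cong \Hom_\cB(\F A, \S_\cB B)^\vee \cong \Hom_\cB(B, \F A),
\]
using Serre duality in $\cA$, then the adjunction $\F \dashv \R$, then Serre duality in $\cB$.

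There is essentially no real obstacle here; the content is purely formal manipulation of natural isomorphisms. The one place to be mildly careful is tracking the direction of Serre duality (whether to move the Serre functor onto the first or second argument, and whether the dualisation is applied once or cancelled by a second use) so that the two $\S$'s end up on the correct side with the correct sign; but this is just bookkeeping. Once the chain is written down, naturality of each step gives naturality of the composite, and the claim follows from the Yoneda-type uniqueness of adjoints.
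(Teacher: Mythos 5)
Your chain of isomorphisms is correct and is exactly the standard argument (the paper itself omits the proof and refers to Bondal--Kapranov and Huybrechts, where this same computation appears): Serre duality in $\cB$, the given adjunction, then Serre duality in $\cA$, with double duals cancelling by Hom-finiteness, and naturality inherited from each step. Nothing further is needed.
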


\subsection{Kernel, image and (co)restriction}

\begin{definition}
Let $\F \colon \cA \to \cB$ be a functor. 
The \emph{kernel} of $\F$ is the full subcategory: 
\[
\ker\F=\{A\in\cA \mid \F(A)=0\}\subset \cA.
\] 
The \emph{(essential) image} of $\F$ is the subset:
\[
\im\F=\{B \in \cB \mid B \cong \F(A) \text{ for some $A \in \cA$}\}\subset \cB.
\] 
\end{definition}

\begin{remark}
Note that $\ker\F$ is automatically triangulated. Moreover, the kernel $\ker \F$ is a thick subcategory of $\cA$.
Actually this generalises the notion of orthogonals of objects, as $A^\perp = \ker \Hom^*(A,\blank)$. 
On the other hand, if $\F$ is full, then the full subcategory of $\cB$ with objects $\im\F$ will be triangulated. For general (exact) $\F$ this might not be true.
\end{remark}

\begin{definition}
Let $\F \colon \cA \to \cB$ a functor. 
If we have a full subcategory $\cA'\subset\cA$ then the \emph{restriction} of $\F$ to $\cA'$ is the functor:
\[\F \res{\cA'} \colon \cA' \to \cB,\] 
which does the same as $\F$ on objects and morphisms. 

Similarly, if we have a full subcategory $\cB'\subset\cB$ such that $\im\F\subset\cB'$ then the \emph{corestriction} of $\F$ to $\cB'$ is the functor:
\[\F \cores{\cB'} \colon \cA \to \cB',\]
which also does the same as $\F$ on objects and morphisms.
\end{definition}

\subsection{Functors with both adjoints}
\label{sec:functors-with-adjoints}

\begin{definition}
If $\F \colon \cA \to \cB$ is an exact functor between triangulated categories with left adjoint $\L$ and right adjoint $\R$ then we can use Fourier--Mukai kernels, bimodules or dg-enhancements, to define the \emph{twist} $\T$ and \emph{cotwist} $\C$ of $\F$ by the following triangles:
\[
\F\R \xra{\eps_\R} \id_\cB \xra{\alpha_\R} \T\xra{\beta_\R}\F\R[1] \qquad \text{and} \qquad \C \xra{\delta_\R} \id_\cA \xra{\eta_\R} \R\F\xra{\gamma_\R}\C[1],
\]
where $\eta_\R$ and $\eps_\R$ are the unit and counit of adjunction, respectively. Similarly, the \emph{dual twist} $\T'$ and \emph{dual cotwist} $\C'$ are defined by the adjoint triangles:
\[
\T' \xra{\delta_\L} \id_\cB \xra{\eta_\L} \F\L\xra{\gamma_\L}\T'[1] \qquad \text{and} \qquad \L\F \xra{\eps_\L} \id_\cA \xra{\alpha_\L} \C'\xra{\beta_\L}\L\F[1],
\]
where $\eta_\L$ and $\eps_\L$ are again the unit and counit of adjunction, respectively.
\end{definition}

\begin{remark}
Note that the dual twist $\T'$ and dual cotwist $\C'$ are cotwist and twist of the left adjoint (and there is a dual statement involving the right adjoint).

For the construction of these triangles and the fact that they behave well under adjunction, we refer the reader to \cite{caldararu2010mukai} or \cite{anno2013spherical}.
\end{remark}

\begin{remark}
If we have more than one functor present in an argument, such as a composition $\F_2\circ \F_1: \cA\to \cB\to \cC$, then we will use $\eta_1$ and $\eta_2$ for the unit morphisms associated to $\F_1$ and $\F_2$, respectively. In particular, $\eta_1$ will be used to denote either $\eta_{\R_1}\colon\id\to\R_1\F_1$ or $\eta_{\L_1}\colon\id\to\F_1\L_1$. Since these maps are taking place on different categories, this should not cause confusion.
\end{remark}

\begin{lemma}[{\cite[\Sec 2.3]{addington2011new}} or {\cite[Lem.~1.4]{meachan2016note}}]
\label{lem:naturalisos}
We have natural isomorphisms:
\[\T\F[-1] \simeq \F\C[1]\qquad \R\T[-1]\simeq\C\R[1]\qquad \F\C'[-1]\simeq\T'\F[1]\qquad \C'\L[-1]\simeq \L\T'[1].\]
\end{lemma}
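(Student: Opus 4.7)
The plan is to deduce all four isomorphisms from one octahedral argument, so I will spell out only the first, $\T\F[-1]\simeq\F\C[1]$, and indicate how the remaining three follow from the same recipe.

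The first step is to whisker the two defining triangles of $\F$ so that they live in a common position. Applying $\F$ on the left of the cotwist triangle $\C\to\id_\cA\xra{\eta_\R}\R\F$ gives the exact triangle $\F\C\to\F\xra{\F\eta_\R}\F\R\F$, so the cone of $\F\eta_\R$ is $\F\C[1]$. Whiskering the twist triangle $\F\R\xra{\eps_\R}\id_\cB\to\T$ by $\F$ on the right gives $\F\R\F\xra{\eps_\R\F}\F\to\T\F$, so the cone of $\eps_\R\F$ is $\T\F$.

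Next I invoke the zigzag identity for the adjunction $\F\dashv\R$, which asserts that the composition $\eps_\R\F\circ\F\eta_\R$ equals $\id_\F$; its cone is therefore $0$. Applying the octahedral axiom (TR4) to this factorisation of $\id_\F$ produces the triangle of cones
\[
\F\C[1]\to 0\to\T\F,
\]
and a single rotation then yields the desired isomorphism $\T\F\simeq\F\C[2]$, equivalently $\T\F[-1]\simeq\F\C[1]$.

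The remaining three identities are proven by the same mechanism, using a different whiskering and the appropriate zigzag identity. For $\R\T[-1]\simeq\C\R[1]$, one whiskers both defining triangles by $\R$ and factors $\id_\R$ as $\R\xra{\eta_\R\R}\R\F\R\xra{\R\eps_\R}\R$. For the two dual statements, one passes to the adjunction $\L\dashv\F$ and its two zigzag identities: $\id_\F$ factors as $\F\xra{\eta_\L\F}\F\L\F\xra{\F\eps_\L}\F$ (yielding $\F\C'[-1]\simeq\T'\F[1]$), and $\id_\L$ factors as $\L\xra{\L\eta_\L}\L\F\L\xra{\eps_\L\L}\L$ (yielding $\C'\L[-1]\simeq\L\T'[1]$). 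In each case the two cones are computed from the twist and cotwist triangles (suitably whiskered), the middle cone is $0$ by the zigzag identity, and the octahedron supplies the asserted isomorphism.

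No step is really an obstacle; the only point requiring care is the bookkeeping of shifts. The asymmetry comes from the positions of $\C$ and $\T$ in their respective defining triangles: the cotwist sits to the left of $\id$, so its contribution to the cone acquires a shift of $[1]$, whereas the twist sits to the right of $\id$ and contributes no shift. This precisely accounts for the $[-1]$ attached to $\T\F$, $\R\T$, $\F\C'$ and $\C'\L$ in the four statements.
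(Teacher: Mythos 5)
Your argument is correct and is essentially the standard proof found in the cited references (the paper itself only cites \cite{addington2011new, meachan2016note} and gives no proof): whisker the twist and cotwist triangles, identify the relevant cones, and apply the octahedral axiom to the zigzag factorisations of $\id_\F$, $\id_\R$ and $\id_\L$, using the enhancement to make sense of functorial cones. The shift bookkeeping in all four cases checks out; only your closing heuristic about "the cotwist sitting to the left of $\id$" is stated loosely (for the dual pair the roles of which cone carries the extra $[1]$ are swapped), but the explicit computations are right.
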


\addtocontents{toc}{\protect\setcounter{tocdepth}{2}}     


\section{Exceptional functors}
\label{sec:exceptional}

\subsection{Definition and examples}
\label{sec:ExcFnctrsExas}
We start with the central notion of this section.

\begin{definition}
\label{def:exceptional-frobenius}
We say that a functor $\F\colon\cA\to\cB$ is \emph{exceptional} if it is fully faithful and admits both adjoints. If, in addition, there is an isomorphism $\R \simeq \L$ between the adjoints of $\F$, then we say that $\F$ is \emph{exceptionally Frobenius}.
\end{definition}

\begin{remark}
Note that an exceptional functor is essentially the inclusion of an admissible subcategory.

Recall \cite[Cor. 1.23]{huybrechts2006fourier} that if $\L\dashv \F\dashv \R$ then $\F$ being fully faithful is equivalent to $\eta_\R \colon \id_\cA \xra{\smash\sim} \R\F$ and $\eps_\L \colon \L\F \xra{\smash\sim} \id_\cA$ being isomorphisms. 
\end{remark}

\begin{remark}
\label{rem:frobenius}
A functor $\F$ is called Frobenius, if there is an isomorphism $\R \simeq \L$ between the adjoints of $\F$. 
Note that $\F$ need not to be fully faithful. 
As an example consider $\F = (\blank)\otimes E \colon \Db(X) \to \Db(X)$ with $E$ any object in $\Db(X)$ where $X$ is smooth and projective. Then the adjoints of $\F$ are $\R = \L = \cHom(E,\blank)$, but $\F$ will not be fully faithful in general. 
\end{remark}

\begin{lemma}
\label{lem:exc-Leta-iso}
If $\F$ is exceptional then we have natural isomorphisms:
\[
\L\eta_\L\colon\L\xra\sim\L\F\L
\quad
\R\eps_\R\colon\R\F\R\xra\sim\R
\quad
\eta_\L\F\colon\F\xra\sim\F\L\F
\quad
\eps_\R\F\colon\F\R\F\xra\sim\F.
\]
\end{lemma}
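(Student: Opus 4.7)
The plan is to derive all four isomorphisms directly from the triangle identities (zig-zag identities) of the two adjunctions $\L\dashv\F\dashv\R$, combined with the fact quoted from \cite[Cor.~1.23]{huybrechts2006fourier} that $\F$ being exceptional forces both $\eta_\R\colon\id_\cA\xra\sim\R\F$ and $\eps_\L\colon\L\F\xra\sim\id_\cA$ to be isomorphisms.

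Recall that the adjunction $\F\dashv\R$ satisfies the zig-zag identities
\[
(\R\eps_\R)\circ(\eta_\R\R)=\id_\R\qquad\text{and}\qquad(\eps_\R\F)\circ(\F\eta_\R)=\id_\F,
\]
while the adjunction $\L\dashv\F$ gives
\[
(\eps_\L\L)\circ(\L\eta_\L)=\id_\L\qquad\text{and}\qquad(\F\eps_\L)\circ(\eta_\L\F)=\id_\F.
\]
Since $\eta_\R$ is an isomorphism, so is $\eta_\R\R$, and the first identity forces $\R\eps_\R$ to be its inverse; hence $\R\eps_\R\colon\R\F\R\xra\sim\R$. Likewise, $\F\eta_\R$ is an isomorphism, so $\eps_\R\F$ is the inverse and $\eps_\R\F\colon\F\R\F\xra\sim\F$. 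Symmetrically, because $\eps_\L$ is an isomorphism, so are $\eps_\L\L$ and $\F\eps_\L$; applying the two zig-zag identities for $\L\dashv\F$ we conclude that $\L\eta_\L$ and $\eta_\L\F$ are the respective inverses, which yields $\L\eta_\L\colon\L\xra\sim\L\F\L$ and $\eta_\L\F\colon\F\xra\sim\F\L\F$.

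There is no real obstacle here; the statement is a purely formal consequence of having two adjunctions on either side of $\F$ together with the vanishing of the cotwist $\C$ (equivalently, the dual cotwist $\C'$), which makes both units/counits $\eta_\R$ and $\eps_\L$ into isomorphisms. The mild point to be careful about is matching the correct zig-zag identity to each of the four claimed isomorphisms, i.e.\ noticing that $\R\eps_\R$ and $\eps_\R\F$ are produced by the $\F\dashv\R$ adjunction (which gives the iso $\eta_\R$), whereas $\L\eta_\L$ and $\eta_\L\F$ are produced by the $\L\dashv\F$ adjunction (which gives the iso $\eps_\L$).
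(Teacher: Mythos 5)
Your proof is correct, and it takes a genuinely more elementary route than the paper's. The paper proves $\L\eta_\L\colon\L\to\L\F\L$ is an isomorphism by completing it to the triangle $\C'\L[-1]\to\L\to\L\F\L\to\L\T'[1]$, invoking \autoref{lem:naturalisos} to identify the cone with $\C'\L$, and then using $\C'=0$ (equivalent to full faithfulness) to conclude that the cone vanishes; the other three isomorphisms are dispatched "by similar arguments." You instead bypass the triangulated structure entirely: from the zig-zag identity $(\eps_\L\L)\circ(\L\eta_\L)=\id_\L$ and the fact that $\eps_\L$ (hence $\eps_\L\L$) is invertible, $\L\eta_\L$ is forced to be the two-sided inverse $(\eps_\L\L)^{-1}$, and likewise for the other three maps using the remaining three triangle identities and the invertibility of $\eta_\R$. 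This is a purely formal 2-categorical argument valid for any pair of adjunctions $\L\dashv\F\dashv\R$ with $\eta_\R$ and $\eps_\L$ invertible, needing neither cones nor the compatibility $\C'\L[-1]\simeq\L\T'[1]$; what the paper's version buys in exchange is an explicit identification of the obstruction to each map being an isomorphism as a (co)twist term, which is the pattern reused throughout the rest of the article. Your only obligation, which you discharge, is to match each of the four maps to the correct adjunction's zig-zag identity.
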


\begin{proof}
Consider the triangle identity:
\[
\begin{tikzcd}
&\C'\L[-1]\ar[d]\ar[dr]&\\
\L \ar[r, "\L\eta_L"]\ar[dr,equal] & \L\F\L\ar[d,"\eps_\L\L"]\ar[r] & \L\T'[1]\\
&\L.&
\end{tikzcd}
\]
Since $\F$ is fully faithful we know that $\eps_\L \colon \L\F \xra{\smash\sim} \id_\cA$ and hence $\C'=0$. In particular, we have $\L\T'[1]\simeq\C'\L[-1]=0$ which implies $\L\eta_L\colon\L\to\L\F\L$ is an isomorphism. That is, even though $\eta_\L\colon\id_\cB\to\F\L$ is not an isomorphism, it becomes an isomorphism after applying $\L$ on the left, or $\F$ on the right. The other isomorphisms follow from similar arguments.
\end{proof}

\begin{remark}
Note that as soon as $\id_\cA$ and $\R\F$ are naturally isomorphic, 
then $\eta_\R$ is already an isomorphism (and analoguously for $\eps_\L$);
see \cite[Lem.~1.1.1]{johnstone2002sketches}.
\end{remark}

\begin{lemma}
\label{lem:LFR-RFL}
Let $\F \colon \cA \to \cB$ be an exceptional functor.
Then the canonical maps: 
\[
\phi \colon \R \xra{\R \eta_\L} \R\F\L \xra{\eta_\R^{-1}\L} \L
\qquad \text{and} \qquad
\psi \colon \R \xra{\eps_\L^{-1} \R} \L\F\R \xra{\L \eps_\R} \L.
\]
are equal.
\end{lemma}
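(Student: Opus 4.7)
My plan is to prove $\phi = \psi$ by a diagram chase that routes both compositions through the common intermediate functor $\L\F\R\F\L$ and then cancels a resulting isomorphism on the codomain.

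First I would write down the three naturality squares coming from the interchange law (i.e.\ bifunctoriality of horizontal composition of natural transformations), namely
\[
\eps_\L^{-1}\R\F\L \circ \R\eta_\L \;=\; \L\F\R\eta_\L \circ \eps_\L^{-1}\R, \qquad
\eps_\L^{-1}\L \circ \eta_\R^{-1}\L \;=\; \L\F\eta_\R^{-1}\L \circ \eps_\L^{-1}\R\F\L,
\]
\[
\L\eta_\L \circ \L\eps_\R \;=\; \L\eps_\R\F\L \circ \L\F\R\eta_\L.
\]
These are automatic: they express nothing more than naturality of $\eps_\L^{-1}$ (for the first two) or of $\eps_\R$ (for the third) applied to the natural transformation with which it is being whiskered.

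The key step to take some care over is the identification $\L\F\eta_\R^{-1}\L = \L\eps_\R\F\L$, which glues the upper two squares to the lower one. This follows from the $\F \dashv \R$ triangle identity $(\eps_\R\F)\circ(\F\eta_\R) = \id_\F$: since $\F$ is exceptional, $\eta_\R$ is invertible, so $\F\eta_\R$ is invertible, and the triangle identity then forces $\eps_\R\F = (\F\eta_\R)^{-1} = \F\eta_\R^{-1}$; whiskering by $\L$ on both sides delivers the claim.

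Chaining the four commutativities produces $\eps_\L^{-1}\L \circ \phi = \L\eta_\L \circ \psi$ as natural transformations $\R \to \L\F\L$. Finally, the dual $\L \dashv \F$ triangle identity $(\eps_\L\L) \circ (\L\eta_\L) = \id_\L$ combined with invertibility of $\eps_\L$ yields $\L\eta_\L = \eps_\L^{-1}\L$, so cancelling this common isomorphism on the left concludes $\phi = \psi$. Apart from the identification $\L\eps_\R\F\L = \L\F\eta_\R^{-1}\L$, the whole argument is essentially bookkeeping; the exceptional hypothesis is used only through invertibility of $\eps_\L$ and $\eta_\R$.
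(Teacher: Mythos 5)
Your argument is correct and is essentially the paper's proof in mirror image: where you whisker with $\L\F$ via $\eps_\L^{-1}$ and chase through $\L\F\R\F\L$, the paper applies $\F$ on the left (using faithfulness of $\F$) and chases through $\F\L\F\R$ and $\F\R\F\L$, but both arguments rest on exactly the same ingredients — the interchange law, the triangle identities, and the invertibility of $\eta_\R$ and $\eps_\L$ coming from full faithfulness. In particular, your key identification $\L\F\eta_\R^{-1}\L = \L\eps_\R\F\L$ is precisely the content of \autoref{lem:exc-Leta-iso} that the paper invokes at the corresponding step.
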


\begin{proof}
The claim can be reformulated to show that the following diagram commutes:
\[
\begin{tikzcd}
\L\F\R \ar[r, "\L \eps_\R"] \ar[d, "\eps_\L\R"', "\wr"] & \L \ar[d, "\eta_\R\L", "\wr"'] \\
\R \ar[r, "\R\eta_L"'] & \R\F\L.
\end{tikzcd}
\]
Since $\F$ is fully faithful, the statement follows by the commutativity of the following diagram:
\[
\begin{tikzcd}
\F\L\F\R \ar[r, "\F\L \eps_\R"] \ar[d, "\F\eps_\L\R"', "\wr"] & \F\L \ar[d, "\F\eta_\R\L", "\wr"'] \\
\F\R \ar[r, "\F\R\eta_L"'] & \F\R\F\L.
\end{tikzcd}
\]
By \autoref{lem:exc-Leta-iso}, the maps $\F \xra{\eta_\L\F} \F\L\F$ and $\F\L\F \xra{\F\eps_\L} \F$ are inverse to each other, and the same holds for $\F \xra{\F\eta_\R} \F\R\F$ and $\F\R\F \xra{\eps_\R\F} \F$.
Extending the previous diagram by these isomorphisms we get:
\[
\begin{tikzcd}
\F\L\F\R \ar[r, "\F\L \eps_\R"] \ar[d, "\F\eps_\L\R"'] \ar[dd, equal, bend right=90] \ar[rd, phantom, "\scriptscriptstyle (\ast)" description] & \F\L \ar[d, "\F\eta_\R\L"] \ar[dd, equal, bend left=90] \\
\F\R \ar[r, "\F\R\eta_\L"'] \ar[d, "\eta_\L\F\R"'] & \F\R\F\L \ar[d, "\eps_\R\F\L"] \\
\F\L\F\R \ar[r, "\F\L\eps_\R"'] & \F\L
\end{tikzcd}
\]
The triangles on both sides commute by the remark above, whereas the bottom square commutes as the units and counits act on separate variables.
To conclude that $(\ast)$ is commutative, we note that
$\eps_\R\F\L$ is an isomorphism and
\[
\eps_\R\F\L \circ \F\R\eta_L \circ \F\eps_\L\R = 
\F\L\eps_\R \circ \eta_\L\F\R \circ \F\eps_\L\R =
\eps_\R\F\L \circ \F\eta_\R\L \circ \F\L \eps_\R
\]
which finishes the proof. For convenience of the reader we depict this chain:
\[
\begin{gathered}[b]
\begin{tikzcd}[sep = small]
{} \ar[d] & {}        & {} & {} \ar[d] & {}        & {} & {} \ar[r] & {} \ar[d]\\
{} \ar[r] & {} \ar[d] & {}=& {} \ar[d] & {}        & {}=& {}        & {} \ar[d]\\
{}        & {}        & {} & {} \ar[r] & {}        & {} & {}        & {}       
\end{tikzcd}
\\[-\dp\strutbox]
\end{gathered}
\qedhere
\]
\end{proof}

\begin{proposition}
\label{lem:frobenius-isos}
\label{prop:frobenius-isos}
Let $\F \colon \cA \to \cB$ be an exceptionally Frobenius functor. 
Then the canonical map 
\[\phi \colon \R \xra{\R \eta_\L} \R\F\L \xra{\eta_\R^{-1}\L} \L\] is an isomorphism.
\end{proposition}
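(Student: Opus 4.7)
My plan is to reduce the assertion to a naturality square in which three of the four sides are already known to be isomorphisms. Since $\F$ is fully faithful, the unit $\eta_\R \colon \id_\cA \xra\sim \R\F$ is an isomorphism, and therefore so is the second factor $\eta_\R^{-1}\L \colon \R\F\L \to \L$ of $\phi$. It thus suffices to show that the first factor $\R\eta_\L \colon \R \to \R\F\L$ is an isomorphism.

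For this, I would observe that \autoref{lem:exc-Leta-iso} already gives the companion isomorphism $\L\eta_\L \colon \L \xra\sim \L\F\L$ for free from fully faithfulness, while the Frobenius hypothesis supplies some abstract isomorphism $\theta \colon \L \xra\sim \R$ between the two adjoints. Applying the interchange law to the horizontal composite of $\theta$ with $\eta_\L$, I obtain a commutative square
\[
\begin{tikzcd}
\L \ar[r, "\L\eta_\L"] \ar[d, "\theta"', "\wr"] & \L\F\L \ar[d, "\theta\F\L", "\wr"'] \\
\R \ar[r, "\R\eta_\L"'] & \R\F\L,
\end{tikzcd}
\]
in which the top arrow is an isomorphism by \autoref{lem:exc-Leta-iso} and both verticals are isomorphisms because $\theta$ is. Consequently $\R\eta_\L$ is an isomorphism, and composing with $\eta_\R^{-1}\L$ yields the desired isomorphism $\phi$.

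There is no serious obstacle here; the only conceptual point is to notice that, although the Frobenius hypothesis only furnishes an \emph{abstract} isomorphism $\theta$ between $\R$ and $\L$, whiskering $\eta_\L$ by $\theta$ intertwines the ``automatic'' isomorphism $\L\eta_\L$ with the canonical map $\R\eta_\L$. Thus the mere existence of some isomorphism $\R \simeq \L$ is already enough to force the specific canonical comparison map $\phi$ to be an isomorphism.
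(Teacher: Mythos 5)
Your proposal is correct and is essentially identical to the paper's own proof: both reduce to showing $\R\eta_\L$ is an isomorphism (since $\eta_\R$ is invertible by full faithfulness) and then sandwich $\R\eta_\L$ in the naturality/interchange square with the abstract isomorphism between the adjoints and the isomorphism $\L\eta_\L$ from \autoref{lem:exc-Leta-iso}. The only cosmetic difference is the orientation of the vertical arrows in the square.
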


\begin{proof}
Since $\F$ is fully faithful, $\eta_\R$ is an isomorphism and so it is sufficient to show that 
$\R\eta_\L \colon \R \to \R\F\L$
is an isomorphism.
If we suppose the isomorphism between $\R$ and $\L$ is given by $\alpha\colon\R\xra\sim\L$, then we can form the commutative diagram:
\[
\begin{tikzcd}
\R \ar[r, "\R\eta_\L"] \ar[d, "\alpha"', "\wr"] & \R\F\L \ar[d, "\alpha\F\L", "\wr"']\\
\L \ar[r, "\L\eta_\L"', "\sim"] &\L\F\L,
\end{tikzcd}
\]
which commutes because the arrows act on separate variables. 
In particular, we have $\R\eta_\L=(\alpha\F\L)^{-1}\circ\L\eta_\L\circ\alpha$, which is an isomorphism by \autoref{lem:exc-Leta-iso}.
\end{proof}

\begin{example}
\label{exa:excobj}
Let $A \in \cA$ be an \emph{exceptional object}, i.e.\ 
$\Hom^*(A,A)\simeq \field$.
Assume that $A$ admits an anti-Serre dual $\S^{-1}A$ and $A$ is \emph{proper}, i.e.\
$\Hom^*(A,A')$ and $\Hom^*(A',A)$ are finite-dimensional (graded) vector spaces for all $A' \in \cA$.
Then the functor 
\[
\F = \F_A \colon \Db(\field\mod) \to \cA, V^\bullet \mapsto V^\bullet \otimes A
\]
is exceptional.
Its adjoints are $\R = \R_A = \Hom^*(A,\blank)$ and $\L = \L_A = \Hom^*(\S^{-1} A, \blank) = \Hom^*(\blank,A)^\vee$.
\end{example}

\begin{example}
\label{exa:admiss}
The inclusion of an admissible subcategory is, by definition, a fully faithful functor with both adjoints, hence exceptional.
Moreover, any exceptional functor $\F \colon \cA \to \cB$ factors into an equivalence $\cA \to \im \F$ and an inclusion of an admissible subcategory $\im \F \hra \cB$.

As a special instance of this type, consider a cubic fourfold $Y \subset \bbP^5$.
Then $\cA_Y=\sod{\cO,\cO(1),\cO(2)}^\perp\subset\Db(Y)$ is called the Kuznetsov component, \cite{kuznetsov2010derived}. The category $\cA_Y$ is 2-Calabi--Yau in the sense that it has a Serre functor given by $\S_{\cA_Y}=[2]$ and, because of this, $\cA_Y$ is often referred to as a noncommutative K3 surface.
\end{example}

In \autoref{sec:bundle} and \autoref{sec:blowup} we will discuss in detail exceptional functors coming from projective bundles and smooth blowups.

\begin{proposition}
[{e.g.\ \cite[Lem. 2.3]{kuznetsov2015derived}}]
\label{prop:kerimsods}
\label{lem:fff-T-projects}
Let $\F \colon \cA \to \cB$ be an exceptional functor.
Then there are semiorthogonal decompostions:
\[
\cB = \sod{\ker \R, \im \F} = \sod{\im \F, \ker \L}
\]
where the decompositions are given by twist and dual twist, respectively:
\[
\F \R \to \id \to \T,
\quad
\T' \to \id \to \F \L.
\]
In particular, $\T$ projects onto $\ker\R$ and induces an equivalence $\ker\L \to \ker \R$, whereas $\T'$ projects onto $\ker\L$ and gives an equivalence $\ker\R \to \ker\L$.
\end{proposition}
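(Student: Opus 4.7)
My plan is to read the two triangles defining $\T$ and $\T'$ as semiorthogonal decomposition triangles, and then to deduce the projection and equivalence statements as functorial consequences of exceptionality. For $\cB=\sod{\ker\R,\im\F}$, I start with the defining triangle $\F\R B \xra{\eps_\R} B \to \T B$: the left term lies in $\im\F$ by construction, and applying $\R$ gives $\R\F\R B \xra{\R\eps_\R} \R B \to \R\T B$, whose first map is an isomorphism by \autoref{lem:exc-Leta-iso}, so $\R\T B=0$ and $\T B\in\ker\R$. Semiorthogonality is immediate from adjunction: $\Hom^*(\F A,X)=\Hom^*(A,\R X)=0$ for any $X\in\ker\R$. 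The second decomposition $\cB=\sod{\im\F,\ker\L}$ is obtained in the same way from the unit triangle $\T' B \to B \xra{\eta_\L} \F\L B$, this time invoking that $\L\eta_\L$ is an isomorphism.

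For the projection statements, \autoref{prop:decomptriangles} identifies the projection $\cB\to\ker\R$ of the first SOD with the assignment $B\mapsto B_{\ker\R}$ read off the triangle above, which is exactly $B\mapsto \T B$; dually $\T'$ is the projection onto $\ker\L$. To obtain the equivalence $\T\colon\ker\L\to\ker\R$ with inverse $\T'\colon\ker\R\to\ker\L$, I would first record two auxiliary vanishings. On the one hand, $\T\F=0=\T'\F$: since $\eps_\R\F$ and $\eta_\L\F$ are isomorphisms by \autoref{lem:exc-Leta-iso}, the triangles $\F\R\F A\to\F A\to\T\F A$ and $\T'\F A\to\F A\to\F\L\F A$ collapse to give $\T\F A=0=\T'\F A$. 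On the other hand, the two triangles degenerate on the complementary kernels: for $B\in\ker\R$ one has $\F\R B=0$ and hence $\T B\simeq B$, and dually $\T' B\simeq B$ for $B\in\ker\L$. Applying $\T'$ to $\F\R B\to B\to \T B$ for $B\in\ker\L$ then yields the triangle $0\to\T' B\to \T'\T B$, giving $\T'\T B\simeq \T' B\simeq B$; the opposite composition $\T\T'\simeq\id$ on $\ker\R$ is handled symmetrically.

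The step requiring the most thought is the equivalence, as restricting $\T$ and trying to verify full faithfulness directly is messier than producing $\T'$ as an explicit quasi-inverse; the observation making this work cleanly is that $\T$ and $\T'$ both annihilate $\im\F$ while restricting to the identity on the opposite kernel. Apart from this, the argument is essentially a matter of bookkeeping which of the four whiskered units/counits from \autoref{lem:exc-Leta-iso} becomes invertible at each step.
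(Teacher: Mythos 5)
Your proof is correct, and since the paper only cites \cite[Lem.~2.3]{kuznetsov2015derived} without reproducing an argument, there is nothing to diverge from: what you wrote is exactly the standard proof of that reference, reading the (co)unit triangles as decomposition triangles and using the whiskered isomorphisms of \autoref{lem:exc-Leta-iso} to place $\T B$ in $\ker\R$ and $\T'B$ in $\ker\L$. The quasi-inverse argument via $\T\F=0=\T'\F$ together with $\T|_{\ker\R}\simeq\id$ and $\T'|_{\ker\L}\simeq\id$ is also the usual one and is carried out correctly.
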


\begin{remark}
We point out that the twist $\T$ coincides with the \emph{left mutation} functor $\bbL_{\im\F}$ through $\im\F$. 
Similarly, the dual twist functor is the \emph{right mutation} functor $\bbR_{\im\F}$ through $\im\F$. See \cite[\Sec2.2]{kuznetsov2007homological} or \cite{bondal1989representations} for more details on this.

We note that even though $\im\F$ is admissible, $\ker\L$ and $\ker\R$ are in general only right and left admissible, respectively.
\end{remark}

\subsection{Frobenius \cods}
\label{sec:frobenius-global}  
\begin{lemma}
\label{cor:two-naturals}
If $\F \colon \cA \to \cB$ 
is an exceptional functor then the cocone $\P$ of the canonical map $\phi \colon \R \to \L$ is isomorphic to $\R\T'$ and $\L\T[-1]$.
In particular, we have triangles:
\begin{equation}\label{Ptriangle}
\P \simeq \R\T'\to\R\xra{\phi}\L
\qquad\text{and}\qquad
\R\xra{\psi}\L\to\L\T \simeq \P[1].
\end{equation}
\end{lemma}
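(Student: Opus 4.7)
The plan is to apply the two adjoint functors $\R$ and $\L$ to the defining triangles of the twist $\T$ and dual twist $\T'$, and then use the fact that $\F$ is fully faithful to identify the resulting objects with $\R$ and $\L$ themselves.

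First, I would start with the dual twist triangle $\T' \xra{\delta_\L} \id_\cB \xra{\eta_\L} \F\L$ and apply $\R$ on the left to obtain the triangle
\[
\R\T' \to \R \xra{\R\eta_\L} \R\F\L.
\]
Since $\F$ is exceptional, \autoref{lem:exc-Leta-iso} (or the preceding remark) gives that $\eta_\R \colon \id_\cA \xra{\sim} \R\F$ is an isomorphism, so $\eta_\R^{-1}\L \colon \R\F\L \xra{\sim} \L$ is as well. Post-composing the last map of the triangle with this isomorphism rewrites it as
\[
\R\T' \to \R \xra{\phi} \L,
\]
which identifies $\P \simeq \R\T'$.

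Next, I would carry out the dual argument: apply $\L$ on the left to the twist triangle $\F\R \xra{\eps_\R} \id_\cB \xra{\alpha_\R} \T$ to produce
\[
\L\F\R \xra{\L\eps_\R} \L \to \L\T.
\]
Because $\F$ is fully faithful, $\eps_\L\R \colon \L\F\R \xra{\sim} \R$ is an isomorphism, and pre-composing with its inverse turns the above into
\[
\R \xra{\psi} \L \to \L\T,
\]
so the cocone of $\psi$ is $\L\T[-1]$.

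Finally, \autoref{lem:LFR-RFL} tells us that the two natural maps $\phi$ and $\psi$ agree, so the two cocones must coincide, yielding $\R\T' \simeq \P \simeq \L\T[-1]$ and producing both triangles in \eqref{Ptriangle}. There is no real obstacle here: the whole argument is essentially ``apply an adjoint to a defining triangle and recognise the result''; the only non-cosmetic input is \autoref{lem:LFR-RFL}, which is what glues the two descriptions of $\P$ together.
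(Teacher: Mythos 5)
Your proof is correct and is essentially the paper's argument: the paper simply takes cones in the commutative square of \autoref{lem:LFR-RFL}, whose rows are exactly the two triangles you obtain by applying $\L$ and $\R$ to the twist and dual twist triangles, with $\eps_\L\R$ and $\eta_\R\L$ as the vertical isomorphisms. Your version just unpacks the same diagram into two separate identifications glued by $\phi=\psi$.
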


\begin{proof}
Taking cones in \autoref{lem:LFR-RFL} gives a commutative diagram of triangles:
\[
\begin{tikzcd}
\L\F\R \ar[r, "\L \eps_\R"] \ar[d, "\eps_\L\R"', "\wr"] & \L \ar[d, "\eta_\R\L"', "\wr"] \ar[r] & \L\T\ar[d, "\wr"] \\
\R \ar[r, "\R\eta_L"'] & \R\F\L \ar[r] & \R\T'[1],
\end{tikzcd}
\]
from which the statements follow.
\end{proof}

\begin{definition}
Let $\F\colon\cA\to\cB$ be an exceptional functor. 
Then we call $\Frb \F  \coloneqq \ker \R\T'$
the \emph{Frobenius \cod} of $\F$ and $\F\cores{\Frb\F}$ the \emph{Frobenius corestriction} of $\F$. 
\end{definition}

\begin{theorem}
\label{prop:frobenius-nbhd-functor}
Let $\F\colon\cA\to\cB$ be an exceptional functor.
Then $\im \F \subset \Frb\F$ and the corestriction $\F\cores{\Frb\F}\colon\cA\to\Frb\F$ is exceptionally Frobenius.
Furthermore, if $\cC$ is a full subcategory of $\cB$ such that $\im \F \subset \cC$ and $\F\cores\cC\colon\cA\to\cC$ is exceptionally Frobenius, then $\cC \subset \Frb\F$.
That is, $\Frb\F$ is the maximal full subcategory on which $\F$ becomes exceptionally Frobenius.
\end{theorem}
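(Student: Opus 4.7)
The plan is to exploit the identity $\Frb\F = \ker\R\T'$ together with the triangle $\R\T' \to \R \xra{\phi} \L$ from \autoref{cor:two-naturals}.

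For the inclusion $\im\F \subset \Frb\F$, I would precompose the defining triangle $\T' \to \id_\cB \to \F\L$ with $\F$ on the right to obtain
\[
\T'\F \to \F \xra{\eta_\L\F} \F\L\F.
\]
By \autoref{lem:exc-Leta-iso} the map $\eta_\L\F$ is an isomorphism, so $\T'\F = 0$ and hence $\R\T'\F = 0$. Equivalently, $\F A \in \ker\R\T' = \Frb\F$ for every $A \in \cA$.

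For the claim that $\F\cores{\Frb\F}$ is exceptionally Frobenius, the key observation is that for any full (triangulated) subcategory $\cD \subset \cB$ with $\im\F \subset \cD$, the restrictions $\R\res{\cD}$ and $\L\res{\cD}$ are right and left adjoints to the corestriction $\F\cores{\cD}$: this is immediate from fullness, since $\Hom_\cD(\F A, D) = \Hom_\cB(\F A, D) \cong \Hom_\cA(A, \R D)$, and analogously for $\L$. The units and counits of these adjunctions are just the restrictions of those of $\F \dashv \R$ and $\L \dashv \F$, so the canonical comparison map for $\F\cores{\cD}$ is exactly $\phi\res{\cD}$. Applying this with $\cD = \Frb\F = \ker\R\T'$, the cocone $\R\T'$ of $\phi$ vanishes on $\Frb\F$ by definition, so $\phi\res{\Frb\F}$ is an isomorphism; the two adjoints of $\F\cores{\Frb\F}$ are thus isomorphic, and the corestriction is exceptionally Frobenius.

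For maximality, suppose $\cC$ is a full subcategory containing $\im\F$ such that $\F\cores{\cC}$ is exceptionally Frobenius. The same restriction argument identifies its adjoints with $\R\res{\cC}$ and $\L\res{\cC}$ and its canonical comparison map with $\phi\res{\cC}$. By \autoref{prop:frobenius-isos}, the fact that $\F\cores{\cC}$ is exceptionally Frobenius forces $\phi\res{\cC}$ to be an isomorphism, so its cocone $\R\T'\res{\cC}$ vanishes and $\cC \subset \ker\R\T' = \Frb\F$. The only real care-point, rather than a genuine obstacle, is the bookkeeping that the adjunction data of $\F\cores{\cD}$ is obtained by simply restricting that of $\F$; once this is in hand, everything reduces to evaluating the triangle of \autoref{cor:two-naturals}.
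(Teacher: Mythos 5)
Your proposal is correct and follows essentially the same route as the paper: both arguments reduce everything to the triangle $\R\T'\to\R\xra{\phi}\L$, show $\R\T'\F=0$ to get $\im\F\subset\Frb\F$, identify the adjoints and comparison map of the corestriction with the restrictions of $\R$, $\L$ and $\phi$ (the paper does this bookkeeping via explicit compatibility diagrams for the units, which is exactly the point you flag as the "only real care-point"), and then invoke \autoref{prop:frobenius-isos} for maximality. The only cosmetic difference is that you deduce $\T'\F=0$ directly from $\eta_\L\F$ being an isomorphism, whereas the paper uses $\R\T'\F\simeq\R\F\C'[-2]=0$ via \autoref{lem:naturalisos}.
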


\begin{proof}
Since $\F$ is fully faithful, the cotwist $\C$ and its dual $\C'$ are both zero. Therefore, by \autoref{lem:naturalisos}, we see that $\P\F\coloneqq\R\T'\F\simeq\R\F\C'[-2]=0$. In particular, we have $\im\F\subset\ker\P=:\Frb\F$ and the corestriction $\F_1 \coloneqq \F\cores{\Frb\F}$ makes sense.

Next we show that $\F_1$ is Frobenius, that is, its adjoints are naturally isomorphic. 
If $\F_2\colon \ker\P\to\cB$ denotes the inclusion then we have a natural isomorphism of functors $\F\simeq \F_2\F_1$ and the adjoints of $\F_1$ are given by $\R_1\simeq \R\F_2$ and $\L_1 \simeq \L\F_2$.
We claim that we have a commutative diagram of triangles:
\[
\begin{tikzcd}
\R\T'\F_2 \ar[r] \ar[d, "\wr"] & \R\F_2 \ar[r, "\R\eta\F_2"] \ar[d, "\wr"] & \R\F\L\F_2 \ar[d, "\wr"]\\
\R_1\T'_1 \ar[r] & \R_1 \ar[r, "\R_1\eta_1"'] & \R_1\F_1\L_1.
\end{tikzcd}
\]
For commutativity of the right square, we apply $\R$ to the compatibility condition:
\[
\begin{tikzcd}
\Hom(\L\F_2,\L\F_2) \ar[r, "\sim"] \ar[d, "\wr"] & \Hom(\F_2,\F\L\F_2) \ar[dd, "\wr"] 
& \id_{\L\F_2} \ar[r, mapsto] \ar[d, mapsto] & \eta \F_2 \ar[dd, mapsto]\\ 
\Hom(\L_1,\L_1) \ar[d, "\wr"] &
& \id_{\L_1} \ar[d, mapsto]  \\
\Hom(\id,\F_1\L_1) \ar[r, "\sim"] & \Hom(\F_2,\F_2\F_1\L_1) 
& \eta_1 \ar[r, mapsto] & \F_2 \eta_1.
\end{tikzcd}
\]
Therefore, we get an induced isomorphism $\R_1\T_1' \simeq \R\T'\F_2 = 0$ as $\F_2 \colon \ker\R\T' \to \cB$.
In particular, this yields an isomorphism $\R_1\eta_1\colon \R_1\xra\sim\R_1\F_1\L_1$ and hence a composite isomorphism $\R_1\simeq\R_1\F_1\L_1\simeq\L_1$, since $\F_1$ is fully faithful, i.e. $\id_\cA\xra\sim\R_1\F_1$.
So $\F_1$ is exceptionally Frobenius.

For maximality, we let $\smash{\tilde\F}_1\coloneqq \F|^\cC\colon \cA\to \cC$ be a corestriction of $\F$ where $\cC$ contains $\im\F$. 
If $\smash{\tilde\F}_2\colon \cC\to\cB$ denotes the fully faithful embedding then a similar argument as above shows that we have 
\[
\Hom(\R\smash{\tilde\F}_2, \R\F\L\smash{\tilde\F}_2) \xra\sim \Hom(\smash{\tilde\R}_1, \smash{\tilde\R}_1\smash{\tilde\F}_1\smash{\tilde\L}_1),\ \R\eta \smash{\tilde\F}_2 \mapsto \smash{\tilde R}_1\smash{\tilde\eta}_1
\]
Moreover, if $\smash{\tilde\F}_1$ is exceptionally Frobenius then $\smash{\tilde\R}_1\smash{\tilde\eta}_1$ is an isomorphism by \autoref{lem:frobenius-isos}, 
and hence $\im \smash{\tilde\F}_2$ is contained in $\ker\P = \ker\R\T'$.
\end{proof}

\begin{remark}
An exceptional functor $\F \colon \cA \to \cB$ is Frobenius if and only if $\Frb\F=\cB$.
\end{remark}

Actually, the structure of the Frobenius \cod is quite simple.

\begin{theorem}
\label{cor:fff-frobenius-decomposition}
Let $\F\colon\cA\to\cB$ be an exceptional functor.
Then the Frobenius \cod decomposes into
\[
\Frb \F = \im\F \oplus (\ker\R \cap \ker\L).
\]
\end{theorem}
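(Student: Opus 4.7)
The plan is to combine the two semiorthogonal decompositions of \autoref{prop:kerimsods} with the defining vanishing $\R\T'=0$ on $\Frb\F$. Concretely, for every $B\in\cB$ the SOD $\cB = \sod{\im\F,\ker\L}$ yields a functorial triangle $\T'B \to B \to \F\L B$, and I aim to show that for $B\in\Frb\F$ the cocone $\T'B$ automatically lies in $\ker\R\cap\ker\L$, while the cone is manifestly in $\im\F$. A bi-orthogonality check will then force this triangle to split, giving the claimed direct sum.

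First I would verify the two inclusions. The inclusion $\im\F\subset\Frb\F$ is already contained in the proof of \autoref{prop:frobenius-nbhd-functor} via $\P\F \simeq \R\F\C'[-2]=0$ (which uses $\C'=0$ since $\F$ is fully faithful). For $\ker\R\cap\ker\L\subset\Frb\F$, given $B\in\ker\R\cap\ker\L$ the triangle $\T'B \to B \to \F\L B$ collapses to an isomorphism $\T'B\simeq B$ because $\F\L B=0$, and hence $\R\T'B\simeq\R B=0$.

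Next I would establish complete orthogonality between $\im\F$ and $\ker\R\cap\ker\L$ inside $\cB$. This is immediate from adjunction: for any $A\in\cA$ and $K\in\ker\R\cap\ker\L$,
\[
\Hom^*(\F A,K)=\Hom^*(A,\R K)=0,\qquad \Hom^*(K,\F A)=\Hom^*(\L K,A)=0.
\]

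Finally, for an arbitrary $B\in\Frb\F$, I would revisit the SOD triangle
\[
\T'B \to B \to \F\L B.
\]
The right-hand term is in $\im\F$ by construction. The left-hand term always lies in $\ker\L$ (it is the projection onto the second component of $\sod{\im\F,\ker\L}$), and the hypothesis $B\in\Frb\F=\ker\R\T'$ gives $\R\T'B=0$, so in fact $\T'B\in\ker\R\cap\ker\L$. By the orthogonality above, $\Hom(\F\L B,\T'B[1])=0$, so the connecting morphism of the triangle is zero and the triangle splits: $B\simeq\F\L B\oplus\T'B$. Combined with the two inclusions this gives $\Frb\F = \im\F \oplus (\ker\R\cap\ker\L)$. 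I do not expect any serious obstacles; the only delicate point is remembering that the splitting of the triangle is a consequence of a $\Hom^1$-vanishing, which follows from full (two-sided) orthogonality rather than semiorthogonality.
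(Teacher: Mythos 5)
Your proof is correct and follows essentially the same route as the paper: both establish mutual orthogonality of $\im\F$ and $\ker\R\cap\ker\L$, and both decompose $B\in\Frb\F$ via the dual twist triangle $\T'B\to B\to\F\L B$ coming from $\cB=\sod{\im\F,\ker\L}$, showing that $\T'B$ lands in $\ker\R\cap\ker\L$. Your reading of $\T'B\in\ker\R$ directly from $\R\T'B=0$ is a slight shortcut over the paper's detour through $\ker\P\cap\ker\L=\ker\R\cap\ker\L$, and your explicit splitting of the triangle is a detail the paper leaves implicit, but these are cosmetic differences.
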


\begin{proof}
Since $\ker\R=\im\F^\perp$ and $\ker\L=\lorth\im\F$, we see that $\ker\R\cap\ker\L$ and $\im\F$ are mutually orthogonal.
Hence $\im\F \oplus (\ker\R \cap \ker\L)$ is a subcategory of $\cB$.

Now we check the inclusion ``$\supseteq$''.
We have checked already in the proof of \autoref{prop:frobenius-nbhd-functor} that $\im\F \subset \Frb{\F}$.
Similarly, if $B\in\ker\R\cap\ker\L$ then the natural triangle $\P\to\R\to\L$ shows that $B\in\ker\P$, giving $\ker\R\cap\ker\L\subset\Frb\F$.

We turn to the converse inclusion ``$\subseteq$''.
If $B\in\Frb\F = \ker\P\subset\cB$, then we can use the semiorthogonal decomposition $\cB=\sod{\im\F,\ker\L}$ to break the object $B\in\cB$ up via the triangle associated to the dual twist: $\T' B \to B \to \F\L B$. 
Notice that $\F\L B\in\im\F\subset\ker\P$ and $B\in\ker\P$ together imply that $\T' B\in\ker\P$. 
Moreover, by \autoref{lem:fff-T-projects} we have $\T' B\in\ker\L$ and so we see that $\T' B\in\ker\P\cap\ker\L$. 
Finally, the triangle $\P\to\R\to\L$ gives an equality $\ker\P\cap\ker\L=\ker\R\cap\ker\L$ and hence we see that $\T' B\in\ker\R\cap\ker\L$, which completes the proof. 
\end{proof}

\begin{remark}
The easiest example where the Frobenius \cod is strictly bigger than the image of $\F$ is the inclusion of a direct summand $\F \colon \cA \hra \cA \oplus \cB$.
Here both adjoints are the same with kernel $\cB$. In particular, $\Frb\F = \cA \oplus \cB$.

This behaviour is not pathological but rather the rule; see \autoref{sec:bundle} and \autoref{sec:blowup} for more details.
\end{remark}

\subsection{Frobenius \nbhds}
\label{sec:frobenius-local}
We can introduce a local analogue of the Frobenius \cod for objects.

\begin{definition}
Let $\F\colon \cA \to \cB$ be an exceptional functor and $A \in \cA$.
The \emph{Frobenius \nbhd} of $\F A\in\cB$ is 
\[
\FrbO\F A \coloneqq \{ B \in \cB \mid \Hom^*(A,\R\T' B) = 0\}.
\]
\end{definition}

The Frobenius \cod is connected to the Frobenius \nbhds in the following way.

\begin{proposition}
\label{prop:frblocglob}
Let $\F \colon \cA \to \cB$ be an exceptional functor.
Then
\[
\Frb \F = \bigcap_{A \in \cA} \FrbO{\F}A.
\]
\end{proposition}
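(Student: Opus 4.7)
The plan is to prove the equality by showing both inclusions, where the forward direction is essentially tautological and the reverse direction follows from a standard self-testing trick on the right orthogonal.

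For the inclusion $\Frb\F\subseteq\bigcap_{A\in\cA}\FrbO\F A$, I would just unwind the definitions: if $B\in\Frb\F=\ker\R\T'$ then $\R\T' B=0$, so certainly $\Hom^*(A,\R\T' B)=0$ for every $A\in\cA$, which places $B$ in $\FrbO\F A$ for every $A$.

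For the reverse inclusion, the key observation is that $\R\T'B$ is itself an object of $\cA$, since $\R\colon\cB\to\cA$. So if $B$ lies in $\FrbO\F A$ for \emph{every} $A\in\cA$, I can specialise to $A=\R\T' B$ and conclude $\Hom^*(\R\T' B,\R\T' B)=0$; in particular $\id_{\R\T' B}=0$, which forces $\R\T' B=0$, i.e.\ $B\in\ker\R\T'=\Frb\F$.

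There is no real obstacle here; the argument is a one-line orthogonality trick and the only thing to check is that the test object $\R\T'B$ is a legitimate value of $A$, which is immediate from the codomain of $\R$. The whole proof fits in a few lines and does not require the triangle $\P\simeq\R\T'\to\R\to\L$ from \autoref{cor:two-naturals} beyond the identification $\P=\R\T'$ implicit in the definition of $\Frb\F$ and $\FrbO\F A$.
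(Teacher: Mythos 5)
Your proof is correct and is essentially the paper's own argument: the paper compresses both inclusions into a single chain of equalities justified "by Yoneda", and your self-testing step with $A=\R\T'B$ is exactly the standard proof of that Yoneda step made explicit. Nothing is missing.
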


\begin{proof}
We compute that
\begin{align*}
\Frb \F & \coloneqq \ker\R\T'= \{ B \in \cB \mid \R\T' B = 0 \} \\
&= \{ B \in \cB \mid \Hom^*(A,\R\T' B) = 0,\ \forall A \in \cA \} \tag{by Yoneda}\\
&= \bigcap_{A \in \cA} \{ B \in \cB \mid  \Hom^*(A, \R\T' B) = 0\} = \bigcap_{A \in \cA} \FrbO{\F} A. \qedhere
\end{align*}
\end{proof}

\begin{proposition}
\label{prop:frobenius-nbhd-object}
Let $\F\colon \cA \to \cB$ be an exceptional functor and $A \in \cA$.
Then $\FrbO \F A$ is the maximal full subcategory of $\cB$ such that 
$\phi \colon \R \to \L$ induces
\[
\Hom^*(A, \R\res{\FrbO \F A}(\blank)) \smash{\xra\sim} \Hom^*(A, \L\res{\FrbO \F A}(\blank)).
\]
\end{proposition}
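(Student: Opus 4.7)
The strategy is to read off the statement directly from the natural triangle
\[
\R\T' \to \R \xra{\phi} \L
\]
established in \autoref{cor:two-naturals}. For any $B \in \cB$, applying $\Hom^*(A,\blank)$ to the exact triangle $\R\T' B \to \R B \to \L B$ produces a long exact sequence of graded vector spaces, in which the graded piece $\Hom^*(A,\R\T' B)$ controls both the kernel and the cokernel (in every degree) of the induced map
\[
\phi_* \colon \Hom^*(A,\R B) \to \Hom^*(A,\L B).
\]
Hence $\phi_*$ is a graded isomorphism exactly when $\Hom^*(A,\R\T' B) = 0$, which is by definition the condition $B \in \FrbO \F A$.

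From this equivalence the theorem is immediate. On the one hand, every $B \in \FrbO \F A$ satisfies $\Hom^*(A,\R\T' B) = 0$, so $\phi$ restricted to $\FrbO \F A$ induces an isomorphism $\Hom^*(A,\R\res{\FrbO\F A}(\blank)) \xra\sim \Hom^*(A,\L\res{\FrbO\F A}(\blank))$. Conversely, if $\cC$ is any full subcategory of $\cB$ on which $\phi$ induces such an isomorphism, then for each $B \in \cC$ the vanishing $\Hom^*(A,\R\T' B) = 0$ is forced by the same long exact sequence, so $B \in \FrbO \F A$ and hence $\cC \subset \FrbO \F A$.

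As the argument is essentially a single application of the triangle in \autoref{cor:two-naturals}, there is no genuine obstacle. The only point that deserves care is the interpretation of the graded vanishing: both the term immediately preceding and the term immediately following $\phi_*$ in each degree of the long exact sequence are pieces of the graded object $\Hom^*(A,\R\T' B)$, so its total vanishing is both necessary and sufficient for $\phi_*$ to be a degree-wise isomorphism.
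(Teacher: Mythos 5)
Your proposal is correct and follows essentially the same route as the paper: apply $\Hom^*(A,\blank)$ to the triangle $\R\T'\to\R\xra{\phi}\L$ of \autoref{cor:two-naturals} and observe that $\phi_*$ is a degree-wise isomorphism exactly when $\Hom^*(A,\R\T' B)=0$. The only cosmetic difference is that the paper additionally notes $\F A\in\FrbO\F A$ (via $\im\F\subset\ker\R\T'$), a point your argument does not need for the statement as phrased.
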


\begin{proof}
First we check that $\F A$ lies inside $\FrbO \F A$. Indeed, 
$\Hom^*(A, \R\T'\F A)$ vanishes as $\im\F \subset \ker\R\T'$ by \autoref{prop:frobenius-nbhd-functor}.

Applying $\Hom^*(A, \blank)$ to the triangle $\R\T' \to \R \to \L$ from \autoref{cor:two-naturals} yields the triangle
\begin{equation}
\label{eq:frobenius-triangle-object}
\Hom^*(A, \R\T'(\blank)) \to \Hom^*(A, \R(\blank)) \xra{\phi_*} \Hom^*(A, \L(\blank)).
\end{equation}
Plugging $B \in \FrbO \F A$ into this triangle shows that
\[
\Hom^*(A, \R\res{\FrbO \F A}(\blank)) \smash{\xra\sim} \Hom^*(A, \L\res{\FrbO \F A}(\blank)).
\]

Let $\cC$ be a full triangulated subcategory containing $\im\F$.
We show that if 
\[
\Hom^*(A, \R\res{\cC}(\blank)) \smash{\xra\sim} \Hom^*(A, \L\res{\cC}(\blank))
\]
then $\cC \subset \FrbO\F A$, which means that $\FrbO\F A$ is maximal.
Let $C \in \cC$ and plug it into \eqref{eq:frobenius-triangle-object}.
By assumption $\Hom^*(A, \R(C)) \smash{\xra\sim} \Hom^*(A, \L(C))$, so $\Hom^*(A,\R\T'(C))=0$.
Consequently $C \in \FrbO\F A$.
\end{proof}

\begin{remark}
Note that this proposition fits nicely with \autoref{prop:frobenius-nbhd-functor}:
For $B \in \bigcap_{A \in \cA} \FrbO \F A$ we get an isomorphism 
$\Hom^*(A, \R B) \smash{\xra\sim} \Hom^*(A, \L B)$ functorial in $A$, which yields, by Yoneda, $\R B \smash{\xra\sim} \L B$ for $B \in \bigcap_{A \in \cA} \FrbO \F A = \Frb \F$.
\end{remark}

The following statement is our workhorse when computing the Frobenius \cods and \nbhds in examples.

\begin{theorem}
\label{prop:weaksod}
Let $\F \colon \cA \to \cB$ be an exceptional functor.
Then for $A \in \cA$, 
\[
\FrbO \F A
= \sod{\im\F,\ker\L \cap \ker \Hom^*(A,\R(\blank))} 
= \sod{\im\F,\ker\L \cap (\F A)^\perp}.
\]
is a semiorthogonal decomposition.
\end{theorem}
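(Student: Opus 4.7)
The plan is to combine the ambient semiorthogonal decomposition $\cB = \sod{\im\F,\ker\L}$ from \autoref{prop:kerimsods} with the defining triangle $\R\T' \to \R \to \L$ from \autoref{cor:two-naturals}. As a warm-up, the two descriptions of the right-hand component agree: the adjunction isomorphism $\Hom^*(A,\R B) \cong \Hom^*(\F A, B)$ identifies $\ker\Hom^*(A,\R(\blank))$ with $(\F A)^\perp$, so the second equality is a relabelling.

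Next I would verify the semiorthogonality of $\im\F$ and $\ker\L\cap(\F A)^\perp$. This is inherited for free from $\cB=\sod{\im\F,\ker\L}$: for $\F A' \in \im\F$ and $B\in\ker\L$, adjunction gives $\Hom^*(B,\F A') = \Hom^*(\L B, A')=0$. I would then check that both pieces really sit inside $\FrbO\F A$. The inclusion $\im\F\subset\FrbO\F A$ is already recorded in \autoref{prop:frobenius-nbhd-object}. For the other piece, I apply $\Hom^*(A,\blank)$ to the triangle $\R\T' \to \R \to \L$ evaluated at some $B \in \ker\L\cap (\F A)^\perp$; the middle term $\Hom^*(A,\R B)\cong\Hom^*(\F A,B)$ vanishes since $B \in (\F A)^\perp$, and the right term $\Hom^*(A,\L B)$ vanishes since $B \in \ker\L$, so the induced long exact sequence forces $\Hom^*(A,\R\T' B)=0$ in every degree, i.e.\ $B\in\FrbO\F A$.

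The final step is the decomposition triangle. Given $B\in\FrbO\F A$, the global sod produces the triangle $\T' B \to B \to \F\L B$, with $\F\L B \in \im\F$ and $\T' B \in \ker\L$ automatic by \autoref{prop:kerimsods}. The only thing left is that $\T' B$ also lies in $(\F A)^\perp$, and this is built into the definition of $\FrbO\F A$: by adjunction $\Hom^*(\F A, \T' B) = \Hom^*(A, \R\T' B)=0$.

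I do not foresee a serious obstacle here; the statement is essentially the restriction of the ambient semiorthogonal decomposition to the Frobenius \nbhd, with the role of $\FrbO\F A$ being exactly to impose the extra condition on the $\ker\L$-component that turns $B^\perp$ into $(\F A)^\perp$. The only point demanding mild care is that the long exact sequence argument in the third step must be read off in every cohomological degree, which is routine.
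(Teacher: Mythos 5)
Your proof is correct and follows essentially the same route as the paper: both rest on the triangle $\R\T'\to\R\to\L$, the ambient decomposition $\cB=\sod{\im\F,\ker\L}$, and the dual twist triangle $\T'B\to B\to\F\L B$. The only cosmetic difference is at the final step, where you verify $\T'B\in(\F A)^\perp$ directly from the adjunction $\Hom^*(\F A,\T'B)\cong\Hom^*(A,\R\T'B)$ and the definition of $\FrbO\F A$, whereas the paper first deduces $\T'B\in\FrbO\F A$ by two-out-of-three and then invokes its identification of $\ker\L\cap\FrbO\F A$.
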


\begin{proof}
Plugging $B \in \cB$ into the triangle \eqref{eq:frobenius-triangle-object} yields:
\[
\Hom^*(A, \R\T' B) \to \Hom^*(A, \R B) \to \Hom^*(A, \L B).
\]
So for $B \in \lorth\im\F = \ker\L$, we get $\Hom^*(A, \R\T' B) \cong \Hom^*(A, \R B)$. 
Therefore, by the definition of $\FrbO \F A$ we get
\[
\lorth\im\F \cap \FrbO \F A = \ker\L \cap \ker \Hom^*(A,\R\T'(\blank)) =  \ker\L \cap \ker \Hom^*(A,\R(\blank)).
\]

Now let $B \in \FrbO \F A$. As an object in $\cB = \sod{\im\F,\lorth\im\F}$, there is a the decomposition triangle
$\T' B \to B \to \F\L B$.
Since $\F\L B \in \im\F \subset \FrbO \F A$ by \autoref{prop:frobenius-nbhd-object}, $\T' B \in \FrbO \F A$ holds as well.
So by the paragraph above $\T' B \in \ker\L \cap \ker \Hom^*(A,\R(\blank))$, which concludes the proof.
\end{proof}

\subsubsection{In presence of Serre functors}
Even if both $\cA$ and $\cB$ admit Serre functors, 
the Frobenius \nbhd $\FrbO \F A$ of an object will not have a Serre functor in general.
Therefore we need the local notion of a Serre dual of an object, see \autoref{def:serre-dual}.

\begin{theorem}
\label{prop:one-natural-serre}
Let $\F\colon \cA \to \cB$ be an exceptional functor.
If $\cA$ and $\cB$ admit Serre functors,
then there is the natural triangle:
\[
\F\S_\cA \to \S_\cB\F \to \T\S_\cB\F.
\]
In particular, we have $\Frb \F = \lorth\im\T\S_\cB\F$
and $\FrbO \F A = \lorth \T\S_\cB\F A$ for $A \in \cA$.
\end{theorem}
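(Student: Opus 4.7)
The plan is to construct the triangle by pre-composing the defining triangle of the twist, $\F\R\xra{\eps_\R}\id_\cB\to\T$, on the right with $\S_\cB\F$, obtaining
\[
\F\R\S_\cB\F \to \S_\cB\F \to \T\S_\cB\F,
\]
and then identifying the leftmost term with $\F\S_\cA$. The Serre duality formula for adjoints recalled in the preliminaries gives $\R\simeq\S_\cA\L\S_\cB^{-1}$, hence $\R\S_\cB\simeq\S_\cA\L$ and therefore $\F\R\S_\cB\F\simeq\F\S_\cA\L\F$. Since $\F$ is exceptional, $\eps_\L\colon\L\F\xra\sim\id_\cA$ is an isomorphism, so $\F\S_\cA\L\F\simeq\F\S_\cA$. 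All identifications are natural, so the resulting triangle is natural.

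For the consequences, \autoref{prop:frblocglob} reduces the global claim $\Frb\F=\lorth\im\T\S_\cB\F$ to the local one $\FrbO\F A=\lorth\T\S_\cB\F A$, since $\bigcap_{A\in\cA}\lorth\T\S_\cB\F A=\lorth\im\T\S_\cB\F$. To prove the local equality, I apply $\Hom^*(B,\blank)$ to the triangle at $A$ and rewrite the first two terms using Serre duality combined with the adjunctions $\L\dashv\F\dashv\R$:
\[
\Hom^*(B,\S_\cB\F A)\simeq\Hom^*(A,\R B)^\vee, \qquad \Hom^*(B,\F\S_\cA A)\simeq\Hom^*(A,\L B)^\vee.
\]
The resulting triangle then agrees with the $\field$-dual of \eqref{eq:frobenius-triangle-object} on its first two terms, and by the standard triangulated comparison its third term is identified with $\Hom^*(A,\R\T' B)^\vee$. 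Hence $B\in\lorth\T\S_\cB\F A$ iff $\Hom^*(A,\R\T' B)=0$ iff $B\in\FrbO\F A$.

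The main obstacle is checking that the pointwise identifications of objects above assemble into a genuine morphism of triangles, so that the forced isomorphism on third terms really exists. This amounts to verifying that the Serre duality pairings in $\cA$ and $\cB$ are compatible with the counits $\eps_\R$ and $\eps_\L$ on the appropriate sides; this is a standard naturality exercise once the two comparison squares are written out, but it is the one point that requires more than routine adjunction bookkeeping.
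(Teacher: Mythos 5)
Your construction of the triangle is correct but goes a genuinely different way than the paper. You precompose the defining twist triangle $\F\R\to\id_\cB\to\T$ with $\S_\cB\F$ and then identify $\F\R\S_\cB\F\simeq\F\S_\cA\L\F\simeq\F\S_\cA$ via $\R\S_\cB\simeq\S_\cA\L$ and $\eps_\L\colon\L\F\xra{\smash\sim}\id_\cA$; the paper instead takes left adjoints of the whole triangle $\R\T'\to\R\to\L$ from \autoref{cor:two-naturals} (using $\S_\cB^{-1}\T\S_\cB\dashv\T'\dashv\T$ and $\S_\cB^{-1}\F\S_\cA\dashv\L$) and then conjugates by $\S_\cB$. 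The adjoint-of-a-triangle route buys exactly the thing you flag as your remaining obstacle: because the new triangle is obtained from $\R\T'\to\R\to\L$ by a reversible chain of adjunctions, the identification of its third term with (the Serre dual of) $\R\T'$ is automatic, and no comparison of maps is needed.

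Concerning the ``in particular'' part, your route via matching the first two terms of two triangles and invoking TR3 does require the compatibility check you defer, since two triangles agreeing only objectwise in two spots need not have isomorphic cones. That check does go through (your map $\eps_\R\S_\cB\F$, transported along $\R\S_\cB\simeq\S_\cA\L$, is Serre-dual to $\psi=\L\eps_\R\circ\eps_\L^{-1}\R$, which equals $\phi$ by \autoref{lem:LFR-RFL}), but it can be bypassed entirely: once the triangle $\F\S_\cA\to\S_\cB\F\to\T\S_\cB\F$ is in hand, the equality $\FrbO\F A=\lorth\T\S_\cB\F A$ follows from the direct computation
\[
\Hom^*(A,\R\T'(\blank))\;=\;\Hom^*(\F A,\T'(\blank))\;=\;\Hom^*(\S_\cB^{-1}\T\S_\cB\F A,\blank)\;=\;\Hom^*(\blank,\T\S_\cB\F A)^\vee,
\]
using only $\F\dashv\R$, the adjunction $\S_\cB^{-1}\T\S_\cB\dashv\T'$ and Serre duality; this is what the paper does, and it needs no comparison of triangles at all. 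I would recommend replacing your deferred naturality argument by this one-line computation; with that substitution your proof is complete.
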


\begin{proof}
This is a consequence of \autoref{cor:two-naturals}.
Recall that $\S_\cB^{-1}\T\S_\cB\dashv\T'\dashv \T$.
In particular, we can manipulate the first triangle there as follows:
\begin{align*}
\R\T'\to\R\to\L 
&\iff \R\T' \to \R \to \S_\cA^{-1}\R\S_\cB                  &&\text{(as $\L\simeq\S_\cA^{-1}\R\S_\cB$)}\\
&\iff \S_\cB^{-1}\T\S_\cB\F \la \F \la \S_\cB^{-1}\F\S_\cA  &&\text{(taking left adjoints)}\\
&\iff \T\S_\cB\F \la \S_\cB\F \la \F\S_\cA                  &&\text{(applying $\S_\cB$)}.
\end{align*}
From these manipulations we get that
$\ker\R\T' = \im(\S_\cB^{-1}\T\S_\cB\F)^\perp = \lorth \im \T\S_\cB\F$,
using Serre duality.
The same reasoning for objects completes the proof:
\[
\Hom^*(A,\R\T'(\blank)) = \Hom^*(\S_\cB^{-1}\T\S_\cB\F A,\blank) = \Hom^*(\blank,\T\S_\cB\F A)^\vee. \qedhere
\]
\end{proof}

\begin{remark}
\label{rem:frobenius-nbhd-object-Serre}
From the last triangle in the proof of \autoref{prop:one-natural-serre} we get 
\[
\Hom^*(A, \R\T' B) = 
\Hom^*(B, \T\S_\cB\F A)^\vee
\]
which vanishes as soon as $B \in \FrbO \F A$.
So we get from $\F\S_\cA A \to \S_\cB\F A \to \T\S_\cB\F A$ that for $B \in \FrbO \F A$ holds functorially:
\begin{align*}
\Hom^*_{\FrbO \F A}(B,\F\S_\cA A) 
& = \Hom^*_{\cB}(B,\F\S_\cA A) \\
&\cong \Hom^*_{\cB}(B,\S_\cB\F A) \\
&\cong \Hom^*_{\cB}(\F A, B)^\vee \\
&= \Hom^*_{\FrbO \F A}(\F A, B)^\vee.
\end{align*}
This means that $\F\S_\cA A$ is a Serre dual of $\F A$ in $\FrbO \F A$.
\end{remark}

\begin{corollary}
\label{cor:frobenius-nbhd-object}
Let $\F\colon \cA \to \cB$ be an exceptional functor and $A \in \cA$.
Assume that $\cA$ and $\cB$ admit Serre functors.
Then $\FrbO \F A$ is the maximal full subcategory of $\cB$ such that $\F\S_\cA A$ is a Serre dual of $\F A$.

In particular, if $A$ is a $d$-Calabi-Yau object in $\cA$,
then $\FrbO \F A$ is the maximal full subcategory of $\cB$ where $\F A$ is $d$-Calabi-Yau.
Therefore, we call in such a case $\FrbO \F A$ the \emph{Calabi-Yau \nbhd} of $\F A$ in $\cB$.
\end{corollary}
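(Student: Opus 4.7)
The plan is to lean heavily on what has already been done. The ``positive'' half of the main assertion—that $\F\S_\cA A$ really is a Serre dual of $\F A$ inside $\FrbO\F A$—is essentially the content of Remark~\ref{rem:frobenius-nbhd-object-Serre}, where the chain of isomorphisms $\Hom^*_{\FrbO\F A}(B,\F\S_\cA A)\cong\Hom^*_{\cB}(B,\S_\cB\F A)\cong\Hom^*_{\cB}(\F A,B)^\vee$ is produced using the triangle $\F\S_\cA A\to\S_\cB\F A\to\T\S_\cB\F A$ from \autoref{prop:one-natural-serre} together with Serre duality in $\cB$. So the new content is maximality.

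For maximality, let $\cC\subset\cB$ be a full subcategory containing $\F A$ and $\F\S_\cA A$ on which $\F\S_\cA A$ is a Serre dual of $\F A$; I want to conclude $\cC\subset\FrbO\F A=\lorth\T\S_\cB\F A$, using the identification from \autoref{prop:one-natural-serre}. Fix $C\in\cC$ and apply $\Hom_\cB(C,\blank)$ to the canonical triangle $\F\S_\cA A\xra{\alpha}\S_\cB\F A\to\T\S_\cB\F A$. Since fullness means $\Hom_\cC=\Hom_\cB$ on $\cC$ and $\S_\cB\F A$ is a Serre dual of $\F A$ in $\cB$, the composite
\[
\Hom_\cB(C,\F\S_\cA A)\xra{\alpha_*}\Hom_\cB(C,\S_\cB\F A)\xra{\sim}\Hom_\cB(\F A,C)^\vee
\]
is precisely the Serre dual comparison. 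The assumption that $\F\S_\cA A$ is a Serre dual of $\F A$ in $\cC$ forces this composite to be an isomorphism, so $\alpha_*$ itself is an isomorphism on $\cC$. By exactness of the triangle, $\Hom_\cB(C,\T\S_\cB\F A)=0$, i.e.\ $C\in\lorth\T\S_\cB\F A=\FrbO\F A$.

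The one point that wants care is the interpretation of the Serre dual condition in $\cC$: strictly speaking, a Serre dual is only determined up to unique isomorphism, so one has to verify that the iso provided by the hypothesis agrees (up to the canonical iso between representing objects) with the one induced by $\alpha$. This is the main ``subtle'' step and is handled by Yoneda: both $\F\S_\cA A$ (by hypothesis in $\cC$) and $\S_\cB\F A$ (by global Serre duality, restricted to $\cC$) represent $\Hom_\cB(\F A,\blank)^\vee$ on $\cC$, and taking $C=\F\S_\cA A\in\cC$ produces the canonical comparison morphism which must coincide with $\alpha$ up to a unit, so the conclusion on vanishing of $\Hom_\cB(C,\T\S_\cB\F A)$ is unchanged.

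Finally, the Calabi-Yau statement is immediate from the main assertion: if $A$ is $d$-Calabi-Yau in $\cA$ then $\S_\cA A\simeq A[d]$, hence $\F\S_\cA A\simeq \F A[d]$, and ``$\F\S_\cA A$ is a Serre dual of $\F A$ in $\cC$'' becomes literally ``$\F A$ is $d$-Calabi-Yau in $\cC$''. Maximality transfers verbatim, justifying the name \emph{Calabi-Yau \nbhd}.
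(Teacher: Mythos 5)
Your overall route is the paper's: the ``positive'' half is \autoref{rem:frobenius-nbhd-object-Serre}, and maximality is the triangle $\F\S_\cA A\xra{\alpha}\S_\cB\F A\to\T\S_\cB\F A$ of \autoref{prop:one-natural-serre} played back through $\Hom^*(C,\blank)$ (the paper phrases the same step on the $\R/\L$ side via \autoref{prop:frobenius-nbhd-object}). The one place where your argument is not yet a proof is exactly the step you flag. The hypothesis gives \emph{some} natural isomorphism $\Hom^*(\blank,\F\S_\cA A)|_\cC\cong\Hom^*(\F A,\blank)^\vee|_\cC$, whereas the long exact sequence needs the \emph{specific} map $\alpha_*$ to be invertible on $\cC$; your resolution, that the comparison morphism ``must coincide with $\alpha$ up to a unit'', is asserted rather than argued, and read literally (a scalar) it is false in general: by Yoneda the natural transformations in question form the space $\Hom(\F\S_\cA A,\S_\cB\F A)\cong\End(A)$, which need not be one-dimensional, so two elements of it need not be proportional.

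The gap closes as follows. Composing the hypothesised isomorphism with the inverse of global Serre duality and applying Yoneda to the representing object $\F\S_\cA A\in\cC$ yields a morphism $\beta\colon\F\S_\cA A\to\S_\cB\F A$ with $\beta_*$ an isomorphism on $\cC$. Since $\R\T\simeq\C\R[2]=0$ (\autoref{lem:naturalisos} and $\C=0$), we have $\Hom^*(\F\S_\cA A,\T\S_\cB\F A)=0$, so applying $\Hom^*(\F\S_\cA A,\blank)$ to the triangle shows that $\alpha\circ(\blank)\colon\End^*(\F\S_\cA A)\to\Hom^*(\F\S_\cA A,\S_\cB\F A)$ is bijective; hence $\beta=\alpha\circ\gamma$ for a unique $\gamma\in\End(\F\S_\cA A)$, and $\beta_*=\alpha_*\circ\gamma_*$. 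Evaluating at $C=\F\S_\cA A\in\cC$, both $\beta_*$ and $\alpha_*$ are bijective there, so $\gamma\circ(\blank)$ is bijective on $\End^*(\F\S_\cA A)$, which forces $\gamma$ to be invertible (pick $x$ with $\gamma x=\id$; then $\gamma(x\gamma-\id)=0$ and injectivity give $x\gamma=\id$). Therefore $\alpha_*=\beta_*\circ(\gamma^{-1})_*$ is an isomorphism on all of $\cC$, and the vanishing of $\Hom^*(C,\T\S_\cB\F A)$, i.e.\ $\cC\subset\lorth\T\S_\cB\F A=\FrbO\F A$, follows as you say. With this inserted your proof, including the Calabi--Yau specialisation, is complete.
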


\begin{proof}
This follows by combining \autoref{prop:frobenius-nbhd-object} and \autoref{rem:frobenius-nbhd-object-Serre}.
\end{proof}

\begin{remark}
In the situation of a Calabi-Yau object $A$ in \autoref{cor:frobenius-nbhd-object}, the Calabi-Yau \nbhd $\FrbO \F A$ only depends on $A$ being a $d$-Calabi-Yau object somewhere.
More precisely, if $\tiF \colon \smash{\tilde\cA} \to \cB$ is another exceptional functor and $\smash{\tilde A} \in \smash{\tilde \cA}$ a $d$-Calabi-Yau object such that $\tiF \smash{\tilde A} \cong \F A$, then $\FrbO{\tiF}{\smash{\tilde A}} = \FrbO \F A$.

To see this note that for $B \coloneqq \F\S_\cA A \cong \F A[d] \cong \tiF \smash{\tilde A}[d] \cong \tiF\S_{\smash{\tilde\cA}} \smash{\tilde A}$, both $\FrbO \F A$ and $\FrbO{\tiF}{\smash{\tilde A}}$ are maximal with the property that $B$ is a Serre dual of $\F A$.
\end{remark}

\subsubsection{Dual Frobenius \nbhds}
\label{sec:frobenius-object-dual}

For completeness, we mention that we could have started this subsection also using $\L\T$ instead of $\R\T'$. In this case, the key steps are
\begin{enumerate}
\item The definition of a \emph{dual Frobenius \nbhd} of $A$ under $\F$ is then
\[
\FrbOd \F A = \{ B \in \cB \mid \Hom^*(\L\T B, A) = 0\}.
\]
\item \autoref{prop:frobenius-nbhd-object} can be extended by
\[
\Hom^*(\R\res{\FrbOd \F A}(\blank),A)^\vee \smash{\xra\sim} \Hom^*(\L\res{\FrbOd \F A}(\blank),A)^\vee.
\]
\item In the presence of Serre functors, we get that
$\F\S_\cA^{-1} A$ is an anti-Serre dual of $\F A$ inside $\FrbOd \F A$, 
i.e.\ corepresents $\Hom^*(\blank,A)^\vee$. 
Moreover, one can check that $\FrbOd \F A = \FrbO \F {\S_\cA^{-1} A}$.
In particular, if $A$ is a Calabi-Yau object, then $\FrbO \F A = \FrbOd \F A$.
\item Finally, \autoref{prop:weaksod} can be extended by 
\[
\FrbOd \F A = \sod{\ker\R \cap \ker\Hom^*(\L(\blank),A), \im\F}
= \sod{\ker\R \cap \lorth\F A, \im\F}.
\]
In particular, in presence of Serre functors, we arrive at 
\[
\FrbO \F A = \FrbOd \F {\S_\cA A} = \sod{\ker\R \cap \lorth\F\S_\cA A, \im\F}.
\]
\end{enumerate}
We leave the proofs as an exercise to the reader.

\subsubsection{Frobenius poset}
\label{sec:poset}

Inspired by the notion of a \emph{spherical poset} of \cite[\Sec 2]{hochenegger2019spherical}, we arrive at the following definition.

\begin{definition}
Let $\F \colon \cA \to \cB$ be an exceptional functor.
Then
\[
\Frbposet \F \coloneqq \{ \FrbO \F A \mid A \in \cA \}
\]
is partially ordered by inclusion, which we call the \emph{Frobenius poset} of $\F$.
\end{definition}

We collect here some general statements on the structure of such a poset.

\begin{lemma}
\label{lem:max-nbhd}
Let $\F \colon \cA \to \cB$ be an exceptional functor.
Then $\FrbO{\F}0 = \cB$ is the maximal element of the Frobenius poset $\Frbposet \F$.
\end{lemma}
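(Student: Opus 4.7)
The plan is to unpack the definition of the Frobenius neighbourhood at the specific object $A = 0$ and check that the defining condition is vacuous. First I would note that $0 \in \cA$ (the zero object exists in any triangulated category), so $\FrbO{\F}0$ is a bona fide element of the poset $\Frbposet \F$, not a degenerate case to be excluded.

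Next, I would apply the definition
\[
\FrbO \F A \coloneqq \{ B \in \cB \mid \Hom^*(A,\R\T' B) = 0\}
\]
with $A = 0$. Since $\Hom^*(0, X) = 0$ for every object $X$ of $\cA$, in particular for $X = \R\T' B$, the defining condition holds for every $B \in \cB$. Hence $\FrbO{\F}0 = \cB$.

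Finally, for maximality in the poset, every $\FrbO \F A$ is by construction a full subcategory of $\cB$, so the inclusion $\FrbO \F A \subset \FrbO{\F}0 = \cB$ is tautological. Thus $\FrbO{\F}0$ is the (unique) maximum of $\Frbposet \F$ with respect to inclusion. There is no real obstacle here; the statement is essentially a sanity check that the empty condition defining the neighbourhood of the zero object gives back the ambient category, and it mirrors the analogous basepoint of the spherical poset in \cite[\Sec 2]{hochenegger2019spherical}.
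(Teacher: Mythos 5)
Your argument is correct and is exactly the paper's proof: plug $A=0$ into the definition, observe $\Hom^*(0,\R\T'B)=0$ for all $B$, so $\FrbO{\F}0=\cB$, and maximality is immediate since every Frobenius neighbourhood is a subcategory of $\cB$.
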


\begin{proof}
Note that $\FrbO{\F}0 = \{ B \in \cB \mid \Hom^*(0,\R\T'B) = 0\} = \cB$.
\end{proof}

\begin{remark}
In many examples, $\Frb{\F}$ is the minimal element of the Frobenius poset, see \autoref{sec:bundle} and \autoref{sec:blowup}.

In general, if $A \in \cA$ is a weak generator, then $\Frb{\F} = \FrbO{\F}A$. In particular, $\Frb{\F}$ is the minimal element of the poset.
To see this note that $B \in \FrbO{\F}A$ if $\Hom^*(A,\P B)=0$, which in turn implies that $\P B = 0$ as $A$ is a weak generator, hence $B \in \ker \P = \Frb{\F}$.
Actually, in this argument it is only important that $A$ is a weak generator for $\im \P$.
\end{remark}

\begin{lemma}
\label{lem:poset-cap}
Let $\F \colon \cA \to \cB$ be an exceptional functor.
Then for $A,A' \in \cA$ holds $\FrbO\F{A \oplus A'} = \FrbO\F A \cap \FrbO\F{A'}$.
\end{lemma}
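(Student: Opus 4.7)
The proof will be essentially a direct unwinding of the definition, using the additivity of $\Hom^*$ in its first argument. Specifically, I would start by writing out what membership in each side means: by definition, $B \in \FrbO\F{A \oplus A'}$ iff $\Hom^*(A \oplus A', \R\T' B) = 0$.

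Next, I would invoke the canonical splitting $\Hom^*(A \oplus A', X) \cong \Hom^*(A,X) \oplus \Hom^*(A',X)$, which is natural in $X$. A direct sum of graded vector spaces vanishes iff both summands vanish, so setting $X = \R\T' B$ yields that $\Hom^*(A\oplus A',\R\T' B) = 0$ if and only if $\Hom^*(A,\R\T' B) = 0$ and $\Hom^*(A',\R\T' B) = 0$. Reading off the definitions, this is precisely the condition $B \in \FrbO\F A \cap \FrbO\F{A'}$.

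Since the argument is a chain of biconditionals, both inclusions are handled simultaneously and there is no real obstacle to overcome. The only thing worth noting is that the statement uses additivity of the Hom-functor in the first slot, which is automatic in any additive (and \emph{a fortiori} triangulated) category, so no extra hypothesis on $\F$, $\cA$ or $\cB$ is used beyond what is standing throughout the paper.
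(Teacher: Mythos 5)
Your proof is correct and is essentially identical to the one in the paper: both arguments unwind the definition of the Frobenius neighbourhood and use the splitting $\Hom^*(A \oplus A', \R\T' B) \cong \Hom^*(A,\R\T' B) \oplus \Hom^*(A',\R\T' B)$.
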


\begin{proof}
The statement holds by definition of the Frobenius neighbourhood, since $\Hom^*(A \oplus A', \R\T'B) \cong \Hom^*(A,\R\T' B) \oplus \Hom^*(A',\R\T' B)$.
\end{proof}

In \autoref{def:Frblattice} we introduce the related notion of a \emph{Frobenius lattice}, which is inspired by the lattice of thick subcategories of a given triangulated category.

\subsection{Example: projective bundles}
\label{sec:bundle}
Let $X$ be some projective variety and $\cE$ a vector bundle on $X$ of rank $n+1$.
Consider the projective bundle $q \colon \bbP(\cE) \to X$ and denote by $\cO_q(k)$ the relative twisting line bundles.
By \cite[Lem. 2.5 \& Thm. 2.6]{orlov1992projective} the functor
\[
\Phi_k \coloneqq q^*(\blank)\otimes\cO_q(k)\colon \Db(X)\to\Db(\bbP(\cE))
\]
is fully faithful for any $k \in \bbZ$ and there is a semiorthogonal decomposition:
\[
\Db(\bbP(\cE)) = \sod{\Phi_0(\Db(X)),\Phi_1(\Db(X)),\ldots,\Phi_{n}(\Db(X)}.
\]
In particular, we have that $q^* = \Phi_0 \colon \Db(X) \to \Db(\bbP(\cE))$ is an exceptional functor.

The following is just the specialisation of \autoref{cor:fff-frobenius-decomposition} and \autoref{prop:weaksod} to the case of a projective bundle.

\begin{proposition}
Let $q \colon \bbP(\cE) \to X$ be a $\bbP^n$-bundle. Then the Frobenius \cod of $q^*$ is
\[
\Frb{q^*} = q^* \Db(X) \oplus \ker q_* \cap \ker q_!
\]
whereas the Frobenius \nbhd for $A \in \Db(X)$ is
\[
\FrbO{q^*}A = \sod{ q^* \Db(X), \ker q_! \cap (q^* A)^\perp }.
\]
\end{proposition}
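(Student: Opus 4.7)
The plan is to invoke the two general results directly, since the statement is precisely the specialisation of \autoref{cor:fff-frobenius-decomposition} and \autoref{prop:weaksod} to the exceptional functor $\F = q^*$.

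First I would verify that $q^*$ really is an exceptional functor in the sense of \autoref{def:exceptional-frobenius}. This follows from the discussion preceding the statement: by Orlov's theorem $q^* = \Phi_0$ is fully faithful, and since it appears as a component of a semiorthogonal decomposition of $\Db(\bbP(\cE))$ its image is admissible, so both adjoints exist. Their identification with $q_*$ on the right and $q_!\coloneqq q_*((\blank)\otimes\omega_q[n])$ on the left is then extracted from the general formula $\L \simeq \S_X^{-1}\R\S_{\bbP(\cE)}$ using the standard Serre functors $\S_Y = (\blank)\otimes \omega_Y[\dim Y]$ on smooth projective varieties.

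With these identifications in place, the first formula is an immediate substitution into \autoref{cor:fff-frobenius-decomposition}, upon noting that $\im q^* = q^*\Db(X)$ as full subcategories. The second formula is obtained in the same way from the second presentation in \autoref{prop:weaksod}, namely $\FrbO{\F}A = \sod{\im\F, \ker\L \cap (\F A)^\perp}$, by the same substitution $\F = q^*$, $\L = q_!$.

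Since both halves are purely substitution into established general theorems, there is no genuine obstacle; the only point requiring mild care is the unambiguous matching of the left adjoint of $q^*$ with the symbol $q_!$ appearing in the statement, which is a standard Serre duality computation.
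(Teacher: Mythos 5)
Your proposal matches the paper exactly: the paper gives no argument beyond observing that the proposition is the specialisation of \autoref{cor:fff-frobenius-decomposition} and \autoref{prop:weaksod} to $\F = q^*$, which is precisely what you carry out, together with the correct identification $\R = q_*$ and $\L = q_!$ coming from Orlov's semiorthogonal decomposition and Grothendieck/Serre duality. No gaps.
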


For projective bundles of low rank we can say more.

\begin{proposition}
Let $q \colon \bbP(\cE) \to X$ be a $\bbP^1$-bundle.
Then we find that $\Frb{q^*} = q^* \Db(X)$ and 
\[
\FrbO{q^*}A = q^* \Db(X)
\]
for $A$ a weak generator of $\Db(X)$.
\end{proposition}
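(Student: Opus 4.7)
The plan is to reduce both equalities to computing a single intersection and then invoke the machinery already in place. Applying the general formula \autoref{cor:fff-frobenius-decomposition} (or, equivalently, the preceding proposition specialised to projective bundles) yields
\[
\Frb{q^*} = q^*\Db(X) \oplus \bigl(\ker q_* \cap \ker q_!\bigr),
\]
so the first assertion is equivalent to showing $\ker q_* \cap \ker q_! = 0$ when the relative dimension is one.

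To pin down $\ker q_!$, I would combine \autoref{prop:kerimsods} with the Orlov semiorthogonal decomposition $\Db(\bbP(\cE)) = \sod{q^*\Db(X),\, q^*\Db(X)\otimes\cO_q(1)}$ recalled at the start of this subsection. The former identifies $\ker q_!$ with $\lorth(\im q^*)$, and this left orthogonal is precisely the second component $q^*\Db(X)\otimes\cO_q(1)$. So every $B \in \ker q_!$ has the form $B \cong q^* F \otimes \cO_q(1)$ for some $F \in \Db(X)$.

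Now I would apply $q_*$ and use the projection formula to obtain
\[
q_* B \cong F \otimes q_*\cO_q(1),
\]
where $q_*\cO_q(1)$ is a nonzero locally free sheaf of rank two (it is $\cE$ or $\cE^\vee$, depending on convention). Since tensoring with a nonzero locally free sheaf is conservative on $\Db(X)$, the vanishing $q_*B = 0$ forces $F = 0$ and hence $B = 0$. This settles $\ker q_* \cap \ker q_! = 0$ and, with it, the first equality.

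For the second claim, the remark following \autoref{lem:max-nbhd} states that a weak generator $A \in \Db(X)$ satisfies $\FrbO{q^*}A = \Frb{q^*}$; combined with the first part this gives $\FrbO{q^*}A = q^*\Db(X)$. The only mildly non-formal ingredient in the whole argument is knowing that $q_*\cO_q(1)$ is a nonzero vector bundle, which is a classical computation for projective bundles; everything else is a direct assembly of results already established in the excerpt.
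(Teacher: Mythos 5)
Your proof is correct, and its computational core coincides with the paper's: both arguments ultimately rest on the projection formula $q_*\bigl(q^*F\otimes\cO_q(1)\bigr)\cong F\otimes q_*\cO_q(1)$ together with the fact that tensoring with the nonzero rank-two bundle $q_*\cO_q(1)=\cE^\vee$ kills no nonzero object of $\Db(X)$. The difference is purely organisational, and the direction of deduction is reversed. The paper proves the local statement first: for a weak generator $A$ it computes $\Hom^*(q^*A, q^*F\otimes\cO_q(1))=\Hom^*(A,F\otimes\cE^\vee)$ inside the decomposition of \autoref{prop:weaksod} and concludes $\FrbO{q^*}A=q^*\Db(X)$; the global equality then follows from $\Frb{q^*}=\bigcap_A\FrbO{q^*}A$ (\autoref{prop:frblocglob}), using that $\Db(X)$ has a generator by \autoref{exa:stronggen}. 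You go the other way: you show $\ker q_*\cap\ker q_!=0$ directly by identifying $\ker q_!$ with $q^*\Db(X)\otimes\cO_q(1)$ via \autoref{prop:kerimsods} and the Orlov decomposition, read off $\Frb{q^*}=q^*\Db(X)$ from \autoref{cor:fff-frobenius-decomposition}, and then invoke the remark after \autoref{lem:max-nbhd} to get the local statement for weak generators. Both reductions are legitimate and explicitly available in the text; your version has the small merit of isolating the geometric input ($\ker q_*\cap\ker q_!=0$, i.e.\ testing against $q_*$ itself) without routing the computation through a choice of generator.
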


\begin{proof}
The first part follows from the second one using \autoref{prop:frblocglob}:
\[
\Frb{q^*} = \bigcap_{A \in \Db(X)} \FrbO{q^*}A.
\]
Note that there is even a strong generator of $\Db(X)$ by \autoref{exa:stronggen}.

For the second part, let $A$ be a weak generator of $\Db(X)$, i.e.\ $\Hom^*(A,B) = 0$ implies that $B=0$.
The Frobenius \nbhd of $A$ is 
\[
\FrbO{q^*}A = \sod{q^* \Db(X), (q^* \Db(X) \otimes \cO_q(1)) \cap (q^* A)^\perp}.
\]
For $B \in \Db(X)$, we find that
\[
\Hom^*(q^* A, q^* B \otimes \cO_q(1)) = \Hom^*(A,B \otimes q_* \cO_q(1)) = \Hom^*(A,B \otimes \cE^\vee).
\]
In particular, if $q^*B \otimes \cO_q(1) \in q^*A^\perp$, then $B \otimes \cE^\vee = 0$ using that $A$ is a weak generator.
Since $\cE^\vee$ is a vector bundle (and therefore faithfully flat), $B \otimes \cE^\vee = 0$ implies $B = 0$.
\end{proof}

We consider the easiest class of $\bbP^1$-bundles: Hirzebruch surfaces.
In the following we denote by $\Db_U(X)$, where $U \subset X$, the subcategory of objects in $\Db(X)$ supported on $U$.

\begin{proposition}
\label{ex:p1bundle}
Let $q \colon \bbP(\cO \oplus \cO(r)) \to \bbP^1$.
Then we find that
\[
\begin{split}
\FrbO{q^*}{\sky{nP}} &= \sod{q^* \Db(\bbP^1), q^* \Db_{\bbP^1 \setminus \{P\}}(\bbP^1) \otimes \cO_q(1) }, \\
\FrbO{q^*}{\cO(j)} & =
\begin{cases}
q^* \Db(\bbP^1) & \text{if $r\neq 0$};\\
\sod{q^* \Db(\bbP^1), q^*\cO(j-1) \otimes \cO_q(1) } & \text{if $r=0$}.
\end{cases}
\end{split}
\]
In particular, $\FrbO{q^*}{\sky{nP}}$ is neither left nor right admissible.
\end{proposition}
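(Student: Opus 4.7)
The plan is to apply the general formula $\FrbO{q^*}A = \sod{q^*\Db(\bbP^1), \ker q_! \cap (q^*A)^\perp}$ from the preceding proposition. For a $\bbP^1$-bundle one has $\ker q_! = q^*\Db(\bbP^1) \otimes \cO_q(1)$ and $q_*\cO_q(1) = \cO \oplus \cO(\pm r)$, so the adjunction $q^* \dashv q_*$ reduces the orthogonality condition $q^*B \otimes \cO_q(1) \in (q^*A)^\perp$ to the pair of vanishings
\[
\Hom^*(A, B) = \Hom^*(A, B(\pm r)) = 0
\]
in $\Db(\bbP^1)$.

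For $A = \sky{nP}$ one has $\sky{nP}^\perp = \Db_{\bbP^1 \setminus \{P\}}(\bbP^1)$ (an object belongs there iff its stalk at $P$ vanishes), a property preserved by twisting with a line bundle, so the condition becomes $B \in \Db_{\bbP^1 \setminus \{P\}}(\bbP^1)$, producing the first formula. For $A = \cO(j)$, Beilinson's semiorthogonal decomposition $\Db(\bbP^1) = \sod{\cO(k-1), \cO(k)}$ gives $\cO(k)^\perp = \thick\cO(k-1)$, so the condition reads $B \in \thick\cO(j-1) \cap \thick\cO(j \mp r - 1)$. On $\bbP^1$ every object of $\thick\cO(k)$ is a finite direct sum of shifts of $\cO(k)$, so the intersection is zero when $r \neq 0$ and equals $\thick\cO(j-1)$ when $r = 0$, giving the two branches of the second formula.

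For the non-admissibility I would exploit that $\Db(\bbP(\cE))$ admits a Serre functor, whence left and right admissibility coincide, so it suffices to disprove left admissibility. A parallel adjunction argument identifies $\lorth \FrbO{q^*}\sky{nP} = q^*\Db_{\{P\}}(\bbP^1) \otimes \cO_q(1)$ (using $\lorth \Db_{\bbP^1\setminus\{P\}}(\bbP^1) = \Db_{\{P\}}(\bbP^1)$). Testing against $D = q^*\cO \otimes \cO_q(1)$, a hypothetical SOD $\sod{\FrbO{q^*}\sky{nP}, \lorth\FrbO{q^*}\sky{nP}}$ would supply a triangle $q^*A_0 \otimes \cO_q(1) \to D \to q^*C \otimes \cO_q(1)$ with $A_0 \in \Db_{\{P\}}(\bbP^1)$ and $C \in \Db_{\bbP^1\setminus\{P\}}(\bbP^1)$. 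Untwisting by $\cO_q(-1)$ and invoking the full faithfulness of $q^*$ converts this into a morphism $A_0 \to \cO$ in $\Db(\bbP^1)$ with cone $C$. The main obstacle is this final step: the long exact sequence of cohomology sheaves, together with the sheaf-level vanishing $\Hom(\mathcal{H}^0(A_0), \cO) = 0$ (a torsion sheaf at $P$ admits no morphism to $\cO$), forces $\cO$ to embed into $\mathcal{H}^0(C)$; but $\mathcal{H}^0(C)$ is a finite-length torsion sheaf, contradicting the torsion-freeness of $\cO$.
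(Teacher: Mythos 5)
Your computation of the Frobenius neighbourhoods is correct and follows essentially the paper's route: both arguments reduce, via $\FrbO{q^*}A = \sod{q^*\Db(\bbP^1), \ker q_!\cap(q^*A)^\perp}$ and the adjunction $\Hom^*(q^*A, q^*B\otimes\cO_q(1)) \cong \Hom^*(A\oplus A\otimes\cO(r), B)$, to an orthogonality computation on $\bbP^1$. (For $A=\cO(j)$ with $r\neq 0$ the paper observes that $\cO(j)\oplus\cO(j+r)$ is a weak generator, whereas you intersect $\thick\cO(j-1)$ with $\thick\cO(j\mp r-1)$ and invoke Krull--Schmidt; these are interchangeable.)

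The non-admissibility claim is where you genuinely diverge. The paper notes that (left or right) admissibility of $\FrbO{q^*}{\sky{nP}}$ would be equivalent to admissibility of $\Db_{\bbP^1\setminus\{P\}}(\bbP^1)$ inside $\Db(\bbP^1)$, and then appeals to the classification of thick (hence of admissible) subcategories of $\Db(\bbP^1)$ from \autoref{prop:dbp1}. You instead produce a concrete obstruction: assuming the semiorthogonal decomposition $\sod{\FrbO{q^*}{\sky{nP}},\lorth\FrbO{q^*}{\sky{nP}}}$ exists, you decompose $D=q^*\cO\otimes\cO_q(1)$, untwist, and obtain a triangle $A_0\to\cO\to C$ on $\bbP^1$ with $A_0$ and $C$ torsion, contradicting torsion-freeness of $\cO$. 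This is correct, and it buys self-containedness: no classification of thick subcategories of $\Db(\bbP^1)$ is needed, only the elementary fact that a torsion-free sheaf cannot be an extension of torsion sheaves. The paper's argument is shorter given \autoref{prop:dbp1} and identifies exactly which subcategories of $\Db(\bbP^1)$ could have worked. One step you should make explicit: a priori the $\FrbO{q^*}{\sky{nP}}$-component of $D$ lies in $\sod{q^*\Db(\bbP^1), q^*\Db_{\bbP^1\setminus\{P\}}(\bbP^1)\otimes\cO_q(1)}$, not automatically in the second factor alone; since $D$ and the $\lorth$-component both lie in $\ker q_! = q^*\Db(\bbP^1)\otimes\cO_q(1)$, so does their cone, which forces it into $q^*\Db_{\bbP^1\setminus\{P\}}(\bbP^1)\otimes\cO_q(1)$ as you assert. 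With that line added, your proof is complete.
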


As (up to shift) the objects $\cO_{nP}$ and $\cO(j)$ are all indecomposable objects of $\Db(\bbP^1)$, see also \autoref{prop:dbp1}, we obtain a description of all Frobenius neighbourhoods using \autoref{lem:poset-cap}.

\begin{proof}
For the first part, note that for $A \in \Db(\bbP^1)$
\[
\FrbO{q^*}A = \sod{q^* \Db(\bbP^1), (q^* \Db(\bbP^1) \otimes \cO_q(1)) \cap q^* A^\perp }
\]
For the intersection, we compute for $B \in \Db(\bbP^1)$ similarly as in the proof above that
\[
\Hom^*(q^*A, q^*B \otimes \cO_q(1)) = \Hom^*(A \oplus A \otimes \cO(r),B).
\]
In particular for $A = \sky{nP}$ we find that
\[
\Hom^*(q^* \sky{nP}, q^*B \otimes \cO_q(1)) = \Hom^*(\sky{nP},B)^{\oplus 2} = 0
\]
if and only if $B \in \Db_{\bbP^1 \setminus \{P\}}(\bbP^1)$ for support reasons.
For $A = \cO(j)$ note that $A \oplus A \otimes \cO(r)$ is a weak generator of $\Db(\bbP^1)$ if $r \neq 0$.
Finally, in the case that $r=0$, the right orthogonal of $A$ inside $\Db(\bbP^1)$ is generated by $\cO(j-1)$.

To see the statement about the non-admissibility of $\FrbO{q^*}{\sky{nP}}$,
note that its (left or right) admissibility would be equivalent to the admissibility of $\Db_{\bbP^1\setminus\{P\}}(\bbP^1)$ inside $\Db(\bbP^1)$.
But the admissible subcategories of $\Db(\bbP^1)$ are only $0$, $\sod{\cO(k)}$ and $\Db(\bbP^1)$. This can be seen by extracting the admissible subcategories among all thick subcategories; see for example \autoref{prop:dbp1} below.
\end{proof}

\begin{remark}
\label{rem:p1bundle}
Let $q \colon \bbP(\cE) \to C$ be a projective bundle of rank $r$ over a smooth projective curve $C$. 
Then we have the semiorthogonal decomposition
\[
\Db(\bbP(\cE)) = \sod{ q^* \Db(C), q^* \Db(X) \otimes \cO_q(1), \ldots, q^* \Db(C) \otimes \cO_q(r) }
\]
Under the projection
$\Db(\bbP(\cE)) \to q^* \Db(X) \otimes \cO_q(1) \simeq \Db(C)$
we obtain an induced map of posets $\complFrbposet{q^*} \to \thickposet{C}$.
Using similar arguments as in the proof of \autoref{ex:p1bundle}, we see that the image of this map contains at least all thick subcategories $\sod{\cO_P \mid P \in V}$ with $V$ an arbitrary subset of closed points in $C$.
\end{remark}

We conclude this section with a qualitative statement about $\bbP^2$-bundles.

\begin{proposition}
Let $q \colon \bbP(\cE) \to X$ be a $\bbP^2$-bundle.
Then the Frobenius \cod of $q^*$ is neither left nor right admissible in $\Db(\bbP(\cE))$.
\end{proposition}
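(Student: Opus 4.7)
I plan to deduce this from the general formula for projective bundles proved above, namely $\Frb{q^*} = q^*\Db(X) \oplus \cK$ with $\cK := \ker q_* \cap \ker q_!$. Since $q^*\Db(X)$ is admissible (as the leftmost piece of Orlov's SOD) and both summands are mutually orthogonal (using $\cK \subseteq \ker q_* = (q^*\Db(X))^\perp$ and $\cK \subseteq \ker q_! = \lorth(q^*\Db(X))$), the admissibility of $\Frb{q^*}$ is equivalent to that of $\cK$, so it suffices to prove $\cK$ is neither left nor right admissible.

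The plan is to reduce this to a single fiber. Pick a closed point $P \in X$ and let $\iota\colon \bbP^2 \hra \bbP(\cE)$ denote the fiber inclusion. Since $q \circ \iota$ is the constant map to $P$, pushforward gives a fully faithful embedding $\iota_*\colon \cK_{\bbP^2} \hra \cK$, where $\cK_{\bbP^2} := \cO_{\bbP^2}^\perp \cap \lorth\cO_{\bbP^2}$. The fiber category $\cK_{\bbP^2}$ contains every stable rank $2$ bundle $E$ on $\bbP^2$ with $c_1(E)=0$ and $c_2(E)=2$: indeed $E \cong E^\vee$, stability forces $H^0(E) = 0$, Serre duality gives $H^2(E) = H^0(E(-3))^\vee = 0$, and Riemann--Roch yields $\chi(E) = 2 - c_2 = 0$, whence $H^*(E) = 0$; the same calculation for $E(-3)$ places $E$ in $\lorth \cO$ as well. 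The moduli of such bundles has positive dimension $4 c_2 - 3 = 5$, so $\cK_{\bbP^2}$ contains infinitely many non-isomorphic objects with $\mathrm{Ext}^1(E,E) \neq 0$ and hence cannot be generated by a finite exceptional collection; by Pirozhkov's classification of admissible subcategories of $\Db(\bbP^2)$ as those arising from exceptional collections, $\cK_{\bbP^2}$ is therefore not admissible in $\Db(\bbP^2)$.

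The hard part will be converting this fiberwise obstruction into an obstruction for $\cK$ inside $\Db(\bbP(\cE))$. The intended mechanism is: if $\cK$ admitted a left adjoint $\Lambda$ to its inclusion, then composing with the right adjoint $\iota^!$ of $\iota_*$ (and suitably corestricting) should yield a left adjoint to $\cK_{\bbP^2} \hra \Db(\bbP^2)$, contradicting the previous paragraph; the right admissible case follows by the symmetric argument using $\iota^*$. Verifying that the induced functor genuinely lands in $\cK_{\bbP^2}$ and satisfies the required universal property is where I expect the real technical work to lie.
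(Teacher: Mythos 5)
There is a genuine gap, and it sits exactly where you flag ``the real technical work'': the transfer of non-admissibility from the fibre to the total space. Your proposed mechanism --- compose a hypothetical adjoint $\Lambda$ of $\ker q_*\cap\ker q_!\hra\Db(\bbP(\cE))$ with $\iota^!$ or $\iota^*$ for a fibre inclusion $\iota\colon\bbP^2\hra\bbP(\cE)$ --- does not produce an adjoint downstairs. The adjunctions you have at your disposal are $\iota^*\dashv\iota_*\dashv\iota^!$ and $\Lambda\dashv(\text{inclusion})$, and chaining them gives $\Hom_{\bbP^2}(\iota^*\Lambda\iota_*F,K')\cong\Hom_{\bbP(\cE)}(\iota_*F,\iota_*K')$, which differs from the desired $\Hom_{\bbP^2}(F,K')$ because $\iota_*$ is \emph{not} fully faithful for a closed embedding of codimension $\dim X>0$ (the discrepancy is governed by $\wedge^\bullet\cN_{\iota}$). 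So the universal property genuinely fails, and no amount of ``suitable corestriction'' repairs it. The paper circumvents this by invoking Kuznetsov's base-change theorem for semiorthogonal decompositions: one checks that $\Frb{q^*}$ is linear over $X$ (via the projection formula, since $B\in\ker q_*\cap\ker q_!$ implies $B\otimes q^*A\in\ker q_*\cap\ker q_!$), and then a hypothetical decomposition $\Db(\bbP(\cE))=\sod{\Frb{q^*},\lorth\Frb{q^*}}$ restricts to a decomposition $\Db(\bbP^2)=\sod{\Frb{p^*},\lorth\Frb{p^*}}$ of a fibre. That theorem is precisely the machinery your sketch would have to reinvent. (The paper also first reduces ``neither left nor right admissible'' to ``not admissible'' via saturatedness of one-sided admissible subcategories of $\Db$ of a smooth projective variety, after Bondal--Van den Bergh; your symmetric-argument remark would need the same care.)

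There is a second, smaller flaw in your fibrewise step. From the existence of a positive-dimensional family of objects $E$ in $\lorth\cO_{\bbP^2}\cap\cO_{\bbP^2}^\perp$ with $\Hom(E,E[1])\neq 0$ you conclude that this category ``cannot be generated by a finite exceptional collection''. That inference is false: $\Db(\bbP^1)=\sod{\cO,\cO(1)}$ is generated by an exceptional pair yet contains the skyscrapers $\sky{P}$, an infinite family of non-isomorphic objects with non-trivial self-extensions. To make your route work you would need the actual content of Pirozhkov's classification (admissible subcategories of $\Db(\bbP^2)$ are, up to mutation, generated by subcollections of the standard collection) and then rule out each case, e.g.\ by a $K$-theory or Euler-form computation. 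The paper instead cites Bondal--Polishchuk, who show directly by a tilting argument that $\lorth\cO_{\bbP^2}\cap\cO_{\bbP^2}^\perp$ is not admissible; identifying the fibre of $\Frb{q^*}$ with $\sod{\cO}\oplus(\lorth\cO\cap\cO^\perp)$ then finishes the proof.
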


\begin{proof}
Recall that by \cite{bondal2003generators} a left or right admissible subcategory in $\Db(X)$ (with $X$ smooth and projective) is automatically saturated, hence admissible.
Therefore, it is sufficent to show that $\Frb{q^*}$ is not admissible.

Assume the contrary, so there is a semiorthogonal decomposition
$\Db(\bbP(\cE)) = \sod{\Frb{q^*}, \lorth \Frb{q^*}}$
We can apply \cite[Thm. 5.6]{kuznetsov2011basechange}, as 
$\Frb{q^*}$ is linear over the base $X$.
Indeed, by the projection formula $q_*(B \otimes q^* A) \simeq q_* B \otimes A$ and the same for $q_!$, so $B \in \ker q_* \cap \ker q_!$ implies $B \otimes q^* A \in \ker q_* \cap \ker q_!$.
Hence we get a semiorthogonal decomposition of a fibre, which turns out to be
\[
\Db(\bbP^2) = \sod{\Frb{p^*},\lorth \Frb{p^*}}
\]
where $p \colon \bbP^2 \to \Spec(\field)$.
Note that $\Frb{p^*} = \sod{\cO} \oplus \lorth \cO \cap \cO^\perp$. 
In particular, we conclude that $\lorth \cO \cap \cO^\perp$ is admissible in $\Db(\bbP^2)$.
But this contradicts \cite[\Sec 1.2]{bondal2013tstructures}.
\end{proof}

\begin{proposition}
\label{prop:frb-p2-bundle}
Let $q \colon \bbP(\cE) \to X$ be a $\bbP^n$-bundle with $n \geq 2$.
Then $\ker q_! \cap \ker q_*$ is non-zero.
\end{proposition}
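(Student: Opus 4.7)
The plan is to deduce this statement formally from the preceding proposition, rather than construct a non-zero object on $\mathbb{P}(\cE)$ by hand. The central tool is the orthogonal decomposition
\[
\Frb{q^*} = \im q^* \oplus (\ker q_* \cap \ker q_!)
\]
provided by \autoref{cor:fff-frobenius-decomposition}, in which the two summands are mutually orthogonal triangulated subcategories of $\Db(\mathbb{P}(\cE))$.

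I would then observe that $\im q^*$ is admissible in $\Db(\mathbb{P}(\cE))$: since $q^*$ is exceptional, it is fully faithful with both adjoints $q_!$ and $q_*$, so the inclusion $\im q^* \hookrightarrow \Db(\mathbb{P}(\cE))$ inherits both a left and a right adjoint by transport of structure along the equivalence $q^*\colon \Db(X) \xra\sim \im q^*$.

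The main argument is then by contradiction. If $\ker q_! \cap \ker q_* = 0$, the decomposition above collapses to $\Frb{q^*} = \im q^*$, and so $\Frb{q^*}$ would itself be admissible. This contradicts the preceding proposition, which states precisely that $\Frb{q^*}$ is neither left nor right admissible when $n \geq 2$. Hence $\ker q_! \cap \ker q_* \neq 0$.

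The main obstacle is therefore not in this proposition itself --- the argument above is purely formal --- but is absorbed into the preceding one: its proof reduces, via Kuznetsov base change, to the failure of admissibility of $\lorth\cO \cap \cO^\perp$ inside $\Db(\mathbb{P}^n)$ for $n \geq 2$, a fact obtained from \cite{bondal2013tstructures}. Once that input is in place, the current claim drops out immediately from the Frobenius decomposition.
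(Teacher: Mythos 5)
Your reduction to the preceding proposition is formally sound as far as it goes: the decomposition $\Frb{q^*} = \im q^* \oplus (\ker q_* \cap \ker q_!)$ from \autoref{cor:fff-frobenius-decomposition} together with the admissibility of $\im q^*$ does show that $\ker q_* \cap \ker q_! = 0$ would force $\Frb{q^*}$ to be admissible. The problem is that you have misread the scope of the proposition you are invoking. The non-admissibility statement is asserted and proved only for $\bbP^2$-bundles, not for $\bbP^n$-bundles with $n \geq 2$: its proof runs through Kuznetsov's base change to reduce to a fibre $\bbP^2$ and then cites the non-admissibility of $\lorth\cO_{\bbP^2} \cap \cO_{\bbP^2}^{\perp}$ from \cite{bondal2013tstructures}. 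As the remark immediately following the statement you are asked to prove makes explicit, that input is special to $\bbP^2$ (it rests on $\End(\cO_{\bbP^2}(1)\oplus\cO_{\bbP^2}(2))$ being hereditary), and the corresponding non-admissibility for $n>2$ is only conjectured in the paper. So your argument establishes the claim for $n=2$ but leaves all $n>2$ unproven; there is a genuine gap, not merely a stylistic difference.

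The paper instead proves the statement directly and uniformly in $n$: it takes the symmetric square of the relative Euler sequence, twists, and exhibits the explicit non-zero object $\Sym^2\Omega_q(3)$ lying in $\ker q_!\cap\ker q_*$ (membership in $\ker q_!$ is read off from the semiorthogonal decomposition, and $q_*\Sym^2\Omega_q(3)=0$ follows from an isomorphism of vector bundles checked fibrewise). If you want to keep your deductive strategy, you would first have to extend the non-admissibility of $\Frb{q^*}$ to all $n\geq 2$, which is precisely what the authors say they cannot currently do; the constructive route is therefore not an optional shortcut here but the substance of the proof.
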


\begin{proof}
We start with the relative Euler sequence:
\[
0 \to \Omega_q \to q^* \cE^\vee \otimes \cO_q(-1) \to \cO_q \to 0
\]
taking its symmetric square and twisting by $\cO_q(3)$ gives 
\[
0 \to \Sym^2 \Omega_q(3) \to q^*(\Sym^2 \cE^\vee) \otimes \cO_q(1) \to q^* \cE^\vee \otimes \cO_q(2) \to 0
\]
From this it is obvious that $\Sym^2 \Omega_q(3)$ lies in 
\[
\ker q_! = \sod{q^* \Db(X) \otimes \cO_q(1),\ldots,q^* \Db(X) \otimes \cO_q(n)}.
\]
We claim that $\Sym^2 \Omega_q(3)$ lies also in $\ker q_*$.
We apply $q_*$ to the short exact sequence and get the triangle
\[
q_* \Sym^2 \Omega_q(3) \to \Sym^2 \cE^\vee \otimes q_* \cO_q(1) \xra{\phi} \cE^\vee \otimes q_*\cO_q(2)
\]
using the projection formula.
We claim that the map $\phi$ is an isomorphism, and therefore $q_* \Sym^2 \Omega_q(3)=0$.
First note that $R^i q_* \cO_q(j) = 0$ for $i,j > 0$, so $\phi$ is a morphism of vector bundles.
Restricting to an arbitrary fibre $x\in X$, $\phi \otimes \field(x)$ becomes an isomorphism
\[
\Sym^2 \Hom_{\bbP^n}(\cO(1),\cO(2)) \otimes H^0(\bbP^n,\cO(1)) \to \Hom_{\bbP^n}(\cO(1),\cO(2)) \otimes H^0(\bbP^n,\cO(2)).
\]
Hence $\phi$ is an isomorphism of vector bundles (its kernel is a vector bundle of rank $\dim \ker(\phi \otimes \field(x)) = 0$; if its cokernel would be non-zero, we have $\coker(\phi \otimes \field(x)) \neq 0$ for $x \in \Supp(\coker(\phi))$, a contradiction to $\phi \otimes \field(x)$ being an isomorphism for all $x$).
Therefore $\Sym^2 \Omega_q(3) \in \ker q_! \cap \ker q_*$.
\end{proof}

\begin{remark}
We conjecture that for $q \colon \bbP^n \to \pt$, the category $\ker q_! \cap \ker q_*$ is non-admissible in $\Db(\bbP^n)$ for all $n \geq 2$.
Unfortunately, the result of \cite[\Sec 1.2]{bondal2013tstructures} about non-admissibility of $\lorth \cO_{\bbP^2} \cap \cO_{\bbP^2}^\perp$ is based on tilting and the fact that $\End(\cO_{\bbP^2}(1) \oplus \cO_{\bbP^2}(2))$ is a hereditary algebra, which does not hold for $\End(\cO_{\bbP^n}(1) \oplus \cdots \oplus \cO_{\bbP^n}(n))$ as soon as $n>2$.
\end{remark}

\subsection{Example: blowups}
\label{sec:blowup}
Let $\pi\colon \tiX \to X$ be the blowup of a smooth projective variety $X$ in a smooth closed subvariety $Z$ of codimension $c \geq 2$,
where the exceptional divisor $E=\bbP(\cN_j)$ is the projectivisation of the rank $c$ normal bundle $\cN_j \coloneqq \cN_{Z/X}$ on $Z$:
\[
\begin{tikzcd}
E \ar[r, hook, "i"] \ar[d, "q"'] & \tiX \ar[d, "\pi"] \\
Z \ar[r, hook, "j"]                & X.
\end{tikzcd}
\]

Recall that the canonical bundle of $\tiX$ is given by:
\[
\omega_{\tiX} = \pi^*\omega_X \otimes \cO_{\tiX}((c-1)E),
\]
and the restriction of the line bundle $\cO_{\tiX}(E)$ is negative on the fibres of $q$. That is, we have:
\[
\cO_E(E) = i^*\cO_{\tilde X}(E) = \cO_q(-1).
\]

For all $k\in\bbZ$, Orlov \cite[Ass. 4.2 \& Thm. 4.3]{orlov1992projective} shows that the functor:
\[
\Psi_k \coloneqq i_*(q^*(\blank) \otimes \cO_q(k)) \colon 
\Db(Z) \to \Db(\tiX)
\]
is fully faithful and we have a semiorthogonal decomposition:
\begin{equation}
\label{eq:orlov-blowup-kerL}
\Db(\tiX) = \sod{\pi^*\Db(X),\Psi_0(\Db(Z)),\Psi_1(\Db(Z)),\ldots,\Psi_{c-2}(\Db(Z))}.
\end{equation}

As in \autoref{sec:bundle}, we will not discuss the Frobenius \cods and \nbhds in general. We focus on cases where the center $Z$ has low codimension.

\begin{proposition}
\label{prop:frb-blowup-codimtwo}
Let $\pi \colon \tiX \to X$ be the blowup in a smooth center $Z$ of codimension $2$.
Then for $A \in \Db(X)$, the Frobenius \nbhd under $\pi^*$ is
\[
\FrbO{\pi^*}A = \sod{\pi^* \Db(X), i_* q^* \Db(Z) \cap (\pi^* A)^\perp}
\]
In particular, we have two extremes:
\[
\FrbO{\pi^*}A =
\begin{cases}
\Db(\tiX) & \text{if and only if $j^* A=0$;}\\
\pi^* \Db(X) & \text{if and only if $j^* A$ is a weak generator of $\Db(Z)$.}\\
\end{cases}
\]

Finally, the Frobenius \cod is $\Frb{\pi^*} = \pi^* \Db(X)$.
\end{proposition}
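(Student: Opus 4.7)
The plan is to deduce the statement as a direct specialisation of the general theory developed in this subsection (primarily \autoref{prop:weaksod}, \autoref{prop:frblocglob} and \autoref{prop:frobenius-nbhd-functor}) to the blowup setup, using Orlov's semiorthogonal decomposition to identify $\ker\pi_!$ and a short base-change calculation to evaluate the relevant Hom-spaces.

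First, I would specialise \eqref{eq:orlov-blowup-kerL} to $c=2$ to read off the semiorthogonal decomposition $\Db(\tiX)=\sod{\pi^*\Db(X),i_*q^*\Db(Z)}$. Comparing with the decomposition $\Db(\tiX)=\sod{\im\pi^*,\ker\pi_!}$ from \autoref{prop:kerimsods} yields the identification $\ker\pi_!=i_*q^*\Db(Z)$. Substituting this into the formula of \autoref{prop:weaksod} immediately gives the displayed description of $\FrbO{\pi^*}A$.

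Next, to analyse when the second summand collapses or fills out, I would compute, for $A\in\Db(X)$ and $B\in\Db(Z)$,
\[
\Hom^*_{\tiX}(\pi^*A,i_*q^*B)\cong\Hom^*_E(q^*j^*A,q^*B)\cong\Hom^*_Z(j^*A,B\otimes q_*\cO_E)\cong\Hom^*_Z(j^*A,B),
\]
using flat base change $i^*\pi^*\cong q^*j^*$, the $(q^*,q_*)$-adjunction together with the projection formula, and $q_*\cO_E=\cO_Z$ (since $q$ is a $\bbP^1$-bundle in codimension $2$). With this identification in hand, the two extreme cases are straightforward: $i_*q^*\Db(Z)\subset(\pi^*A)^\perp$ holds iff $\Db(Z)\subset(j^*A)^\perp$, which (taking $B=j^*A$) is equivalent to $j^*A=0$; and $i_*q^*\Db(Z)\cap(\pi^*A)^\perp=0$ holds iff $(j^*A)^\perp=0$ in $\Db(Z)$, i.e.\ iff $j^*A$ is a weak generator.

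Finally, for the global statement I would invoke \autoref{prop:frblocglob}: the inclusion $\pi^*\Db(X)\subset\Frb{\pi^*}$ is given by \autoref{prop:frobenius-nbhd-functor}, while for the reverse inclusion it is enough to exhibit a single $A\in\Db(X)$ with $\FrbO{\pi^*}A=\pi^*\Db(X)$. Taking $A=\cO_X\oplus\cL\oplus\cdots\oplus\cL^{\otimes\dim X}$ for a very ample $\cL$ on $X$, the restriction $j^*A$ contains $\cO_Z\oplus j^*\cL\oplus\cdots\oplus(j^*\cL)^{\otimes\dim Z}$ with $j^*\cL$ very ample on $Z$, so it is a classical (hence weak) generator of $\Db(Z)$ by \autoref{exa:stronggen}, and the second extreme case applies. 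There is no serious obstacle here; the only computation that needs genuine care is the base-change Hom identification above, and in particular the use of $q_*\cO_E=\cO_Z$, which is the key feature peculiar to codimension two.
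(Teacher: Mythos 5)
Your proof is correct and follows essentially the same route as the paper: identify $\ker\pi_! = i_*q^*\Db(Z)$ from Orlov's decomposition, apply \autoref{prop:weaksod}, reduce the Hom-spaces to $\Hom^*(j^*A,B)$, and deduce the global statement from \autoref{prop:frblocglob} using a generator as in \autoref{exa:stronggen}. The only cosmetic difference is that you run the Hom computation through the $(i^*,i_*)$-adjunction rather than $(\pi^*,\pi_*)$; note that the isomorphism $i^*\pi^* \simeq q^*j^*$ you invoke is simply functoriality of derived pullback along $\pi\circ i = j\circ q$ (calling it flat base change is a misnomer, and genuine base change for the blowup square in fact fails), so the step is fine.
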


\begin{proof}
The Frobenius \nbhd for a general $A$ under $\pi^*$ is of the stated shape by combining \eqref{eq:orlov-blowup-kerL} with \autoref{prop:weaksod}.

For general $A \in \Db(X)$ and $B \in \Db(Z)$ we compute
\begin{equation}
\label{eq:blowuporth}
\Hom^*(\pi^*A,i_*q^* B) =
\Hom^*(A,\pi_* i_* q^* B) = \Hom^*(A, j_* q_* q^* B) =
\Hom^*(j^*A,B)
\end{equation}
using adjunctions, fully faithfulness of $q^*$ and $\pi \circ i = j \circ q$.

If $j^*A$ is a weak generator of $\Db(Z)$, then the vanishing of \eqref{eq:blowuporth} implies $B = 0$.  
So for such an $A$, we get that $i_* q^* \Db(Z) \cap \pi^* A^\perp = 0$ and hence $\FrbO{\pi^*}A = q^* \Db(X)$.

Whereas if $j^*A = 0$, then there is no restriction on $B \in \Db(Z)$ and we get $\FrbO{\pi^*}A=\Db(\tiX)$ in this case.

Note that if we choose a strong generator $A$ of $\Db(X)$ as in \autoref{exa:stronggen} using a very ample line bundle, then $j^* A$ will be a strong generator of $\Db(Z)$. So by \autoref{prop:frblocglob}, we obtain the statement about $\Frb{\pi^*}$.
\end{proof}

\begin{example}
Let $\pi \colon \tiX \to X$ be the blowup in a point $P$. Then the above proposition exhausts all possible cases and we find:
\[
\FrbO{\pi^*}A =
\begin{cases}
\Db(\tiX) & \text{if $P \not\in \Supp(A)$;}\\
\pi^* \Db(X) & \text{if $P \in \Supp(A)$.}
\end{cases}
\]
\end{example}

Besides blowing up a point, in the following, we obtain a full description of the Frobenius poset also when blowing up a $\bbP^1$ on a threefold.

Here, a lattice derived from the Frobenius poset will be useful.
Recall that a \emph{lattice} is a poset such that any two of its elements have a unique supremum and infimum.
The infimum exists already in the Frobenius poset: by \autoref{lem:poset-cap} it is the intersection $\FrbO\F A \cap \FrbO\F{A'} = \FrbO\F{A \oplus A'}$.
A supremum does not exist in general.

\begin{definition}
\label{def:Frblattice}
Let $\F \colon \cA \to \cB$ be an exceptional functor.
Then the \emph{Frobenius lattice} $\complFrbposet\F$ is the minimal lattice containing $\Frbposet \F$ and closed under
\begin{itemize}
\item union $\FrbO\F A \cup \FrbO\F{A'}  \coloneqq \thick{\FrbO\F A, \FrbO\F{A'}}$;
\item and arbitrary intersections.
\end{itemize}
\end{definition}

This definition is inspired by the lattice of thick subcategories of a triangulated category.
We always have a natural inclusion $\Frbposet \F \subseteq \complFrbposet\F$.

We also need the description of the lattice of thick subcategories of $\Db(\bbP^1)$, which will be denote by $\thickposet{\bbP^1}$.

\begin{proposition}
\label{prop:dbp1}
The indecomposable objects in $\Db(\bbP^1)$ are, up to shift, structure sheaves of (fat) closed points $\sky{nP}$ and the line bundles $\cO(k) = \cO_{\bbP^1}(k)$.

Moreover, the thick subcategories of $\Db(\bbP^1)$ are $0$, $\sod{\cO_{\bbP^1}(k)}$, $\Db(\bbP^1)$ and $\sod{\cO_P \mid P \in V}$ with $V$ any subset of closed points in $\bbP^1$.
\end{proposition}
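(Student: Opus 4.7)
The plan is to address the two claims in turn, leveraging the fact that $\bbP^1$ is hereditary.

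For the classification of indecomposables, I would first invoke the fact that the abelian category of coherent sheaves on $\bbP^1$ has global dimension one. Consequently $\Ext^{\geq 2}$ vanishes, the obstruction to splitting off the cohomology sheaves in the truncation triangle $\tau^{\leq n}E \to E \to \tau^{>n}E$ is zero, and by induction every $E \in \Db(\bbP^1)$ is formal, i.e.\ splits as the direct sum of shifts of its cohomology sheaves. This reduces the problem to classifying the indecomposable coherent sheaves, which follows from Grothendieck's theorem (vector bundles on $\bbP^1$ are sums of line bundles) combined with the local structure of torsion sheaves at a closed point $P$: as finite-length modules over the local ring $\cO_{\bbP^1,P}$ they decompose as direct sums of the indecomposables $\sky{nP}$.

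For the classification of thick subcategories $\cT \subset \Db(\bbP^1)$, formality means $\cT$ is determined by the set of indecomposables (up to shift) it contains, and I would split into cases according to how many line bundles belong to $\cT$. Suppose $\cT$ contains two distinct line bundles $\cO(k)$ and $\cO(m)$ with $m-k = n > 0$. Choosing a section of $\cO(n)$ with $n$ \emph{distinct} simple zeros $P_1,\ldots,P_n$ produces a triangle $\cO(k) \to \cO(m) \to \bigoplus_{i=1}^n \sky{P_i}$; thickness then puts each $\sky{P_i}$ into $\cT$, and varying the section over all configurations of points shows $\sky{P} \in \cT$ for every closed point $P$. Combined with $\cO(k)$, the triangles $\cO(j) \to \cO(j+1) \to \sky{P}$ force every $\cO(j)$ into $\cT$, so $\cT = \Db(\bbP^1)$. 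If instead $\cT$ contains a single line bundle $\cO(k)$ together with some torsion sheaf $\sky{nP}$, the triangle $\cO(k) \to \cO(k+n) \to \sky{nP}$ produces a second line bundle, reducing to the previous case. The only remaining possibility involving a line bundle is therefore $\cT = \sod{\cO(k)}$.

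If $\cT$ contains no line bundles, its torsion content splits as a direct sum of pieces at each supporting point, so it suffices to describe the thick subcategories consisting of torsion sheaves at a fixed point $P$. I would argue that any such subcategory is either zero or equals $\thick{\sky{P}}$: the extensions $0 \to \sky{P} \to \sky{nP} \to \sky{(n-1)P} \to 0$ show that $\sky{P}$ generates all of the $\sky{nP}$, while for the converse, the cone of multiplication by a local uniformiser $t\colon \sky{nP} \to \sky{nP}$ has cohomology $\sky{P}$ in two consecutive degrees, and hereditariness forces this cone to split as $\sky{P} \oplus \sky{P}[1]$, so $\sky{P} \in \cT$ by closure under direct summands. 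Setting $V \coloneqq \{P \mid \sky{P} \in \cT\}$ then yields $\cT = \sod{\sky{P} \mid P \in V}$.

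The subtle step is producing the simple skyscrapers $\sky{P}$ from two distinct line bundles: a section vanishing to order $n$ at a single point only yields the indecomposable fat skyscraper $\sky{nP}$, from which the simples cannot be extracted. Choosing a section with $n$ pairwise distinct simple zeros turns the cone into a direct sum of distinct simples, at which point the direct-summand axiom of thickness does the work.
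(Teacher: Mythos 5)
Your argument is correct, but it is worth noting that the paper does not actually prove this proposition: its ``proof'' consists of the single citation to Krause's article on the projective line, which establishes both claims (over an arbitrary field) via the structure theory of the hereditary category $\mathrm{coh}(\bbP^1)$ and its tilting description. Your proposal therefore supplies a genuinely self-contained alternative. The route you take is the standard one and it works: hereditariness gives formality, Grothendieck plus the structure of finite-length modules over a DVR gives the indecomposables, and the classification of thick subcategories then reduces to tracking which indecomposables lie in a given thick subcategory. All the triangles you use are correct (in particular the cone of multiplication by a uniformiser on $\sky{nP}$ has cohomology $\sky{P}$ in two adjacent degrees and splits by formality, so $\sky{P}\in\thick{\sky{nP}}$), and the case analysis is exhaustive. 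What the citation buys the authors is brevity and independence from the hypothesis that $\field$ be algebraically closed; what your argument buys is transparency, at the cost of using that $\field$ is infinite when choosing a section of $\cO(n)$ with $n$ pairwise distinct zeros.

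One small internal inconsistency in your exposition: in the closing paragraph you assert that from a section vanishing to order $n$ at a single point ``the simples cannot be extracted'' from $\sky{nP}$. This contradicts your own Case~3 argument, which shows precisely that $\sky{P}\in\thick{\sky{nP}}$ for every $n$. The distinct-zeros trick is thus a convenience, not a necessity; alternatively one could take any nonzero section of $\cO(n)$, land in some $\bigoplus_i\sky{n_iP_i}$, split off the indecomposable summands by thickness, and then apply the uniformiser argument at each $P_i$. Either way the proof goes through, so this is a cosmetic remark rather than a gap.
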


\begin{proof}
See for example \cite[\Sec 4.1]{krause2017projective} for details, where $\field$ is not necessarily algebraically closed.
\end{proof}

\begin{proposition}
\label{prop:rational-curve-threefold}
Let $\pi \colon \tiX \to X$ be the blowup of a threefold in a smooth rational curve $C$.
For $A \in \Db(X)$, its Frobenius neighbourhood $\FrbO{\pi^*}A$ is one of the following
\[
\begin{array}{ll}
\Db(\tiX) = \sod{\pi^* \Db(X), i_*q^* \Db(C)} & \text{if $j^*A  = 0$};\\
\sod{\pi^*\Db(X), i_* q^* \sod{\cO_P \mid \forall i\, P \neq P_i} } & \text{if $j^*A \cong \bigoplus_i \cO_{n_i P_i}[l_i]$};\\
\sod{\pi^*\Db(X), i_* q^* \cO_C(k-1)} & \text{if $j^*A \cong \bigoplus_i \cO(k)[l_i]$};\\
\sod{\pi^*\Db(X)} & \text{if $j^*A$ is a weak generator of $\Db(\bbP^1)$}.
\end{array}
\]
Moreover, the Frobenius lattice $\complFrbposet{\pi^*}$ is isomorphic to $\thickposet{\bbP^1}$.
\end{proposition}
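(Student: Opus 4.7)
The plan is to reduce the computation to determining right orthogonals in $\Db(\bbP^1)$ and then to invoke the classification of indecomposables from \autoref{prop:dbp1}. Since $C$ has codimension two in the threefold $X$, \autoref{prop:frb-blowup-codimtwo} applies directly and gives
\[
\FrbO{\pi^*}A = \sod{\pi^*\Db(X),\, i_* q^* \Db(C) \cap (\pi^*A)^\perp},
\]
and the adjunction identity $\Hom^*(\pi^*A, i_* q^* B) = \Hom^*(j^*A, B)$ shown in the proof of that proposition rewrites the condition $i_* q^* B \in (\pi^*A)^\perp$ as $B \in (j^*A)^\perp$ inside $\Db(C) \simeq \Db(\bbP^1)$. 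Thus the problem reduces to computing $(j^*A)^\perp$ in $\Db(\bbP^1)$.

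By \autoref{prop:dbp1}, $j^*A$ decomposes as a finite direct sum of shifts of indecomposables, each either of torsion type $\cO_{nP}$ or of line bundle type $\cO(k)$. Two elementary computations on $\bbP^1$---using the resolutions $\cO(-nP) \to \cO \to \cO_{nP}$ combined with Serre duality, and the standard semiorthogonal decomposition $\Db(\bbP^1) = \sod{\cO(k-1),\cO(k)}$---yield
\[
\cO_{nP}^\perp = \sod{\cO_Q \mid Q \neq P} \qquad \text{and} \qquad \cO(k)^\perp = \sod{\cO(k-1)}.
\]
Intersecting over the summands of $j^*A$ then produces the four cases of the statement, provided one also observes that any torsion summand excludes all line bundles from the orthogonal and that line bundle summands of distinct twists have disjoint orthogonals; in either mixed situation $j^*A$ is a weak generator of $\Db(\bbP^1)$, so the orthogonal collapses to zero.

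For the lattice statement, the correspondence $\FrbO{\pi^*}A \leftrightarrow (j^*A)^\perp$ identifies $\Frbposet{\pi^*}$ with the subposet of $\thickposet{\bbP^1}$ consisting of $0$, $\Db(\bbP^1)$, the subcategories $\sod{\cO(k-1)}$ for $k \in \bbZ$, and the cofinite point-type subcategories $\sod{\cO_P \mid P \in \bbP^1 \setminus F}$ with $F \subset \bbP^1$ finite (finiteness of $F$ is forced by $j^*A$ being bounded, hence having only finitely many indecomposable summands). Closing under arbitrary intersections recovers $\sod{\cO_P \mid P \in V}$ for every $V \subseteq \bbP^1$ via $V = \bigcap_{P \notin V}(\bbP^1 \setminus \{P\})$, while closing under union produces nothing new thanks to the generation relations $\thick\{\cO_P, \cO(k)\} = \thick\{\cO(k-1), \cO(k)\} = \Db(\bbP^1)$ (and the analogue for two distinct line bundles, which also generates all of $\Db(\bbP^1)$). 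Matching this against the description in \autoref{prop:dbp1} then delivers the isomorphism $\complFrbposet{\pi^*} \simeq \thickposet{\bbP^1}$.

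The main obstacle will be the careful case analysis of right orthogonals in $\Db(\bbP^1)$, and in particular verifying that any mixture of types (torsion with line bundle, or two line bundles of different twists) forces $\FrbO{\pi^*}A = \sod{\pi^*\Db(X)}$. The remaining subtlety is the lattice bookkeeping: confirming that the only thick subcategories of $\Db(\bbP^1)$ missing from $\Frbposet{\pi^*}$ are the non-cofinite point-type ones, and that these are all recovered from cofinite elements purely by arbitrary intersection.
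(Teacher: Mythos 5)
Your reduction to right orthogonals in $\Db(\bbP^1)$ via \autoref{prop:frb-blowup-codimtwo} and the adjunction $\Hom^*(\pi^*A, i_*q^*B) \cong \Hom^*(j^*A,B)$ is exactly the paper's argument, and your case analysis of $(j^*A)^\perp$ --- including the observation that any mixed sum of types is a weak generator, so the orthogonal collapses --- correctly reproduces the four cases. The lattice part, however, has two genuine slips. First, you assert that the image of $\Frbposet{\pi^*}$ in $\thickposet{\bbP^1}$ contains $\sod{\cO(k-1)}$ for \emph{every} $k\in\bbZ$. This presumes that every line bundle on $C$ occurs as the single indecomposable summand of some $j^*A$, which is not automatic: since $j^*$ commutes with $\otimes$ and duals, the set of realised degrees is a subgroup $k_0\bbZ\subseteq\bbZ$ (nontrivial because of ample line bundles), and nothing forces $k_0=1$. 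If $k_0>1$ your projection is not surjective onto $\thickposet{\bbP^1}$, and one must add the paper's closing remark that a $\bbZ$-indexed antichain of line-bundle nodes and a $k_0\bbZ$-indexed one still give abstractly isomorphic lattices.

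Second, your claim that closing under unions produces nothing new fails in exactly one place: the full torsion subcategory $\sod{\cO_P\mid P\in\bbP^1}$ lies in $\thickposet{\bbP^1}$ but is \emph{not} an intersection of the cofinite point-type elements --- every nonempty such intersection has nonempty excluded set, and the empty intersection is the top element $\Db(\tiX)$, so your formula $V=\bigcap_{P\notin V}(\bbP^1\setminus\{P\})$ degenerates precisely when $V=\bbP^1$. This element is obtained only as a union, e.g.\ of two cofinite point-type subcategories with disjoint excluded sets, which is why the paper insists that both closure operations in \autoref{def:Frblattice} are genuinely needed. Neither issue overturns the final isomorphism, but both must be repaired for the lattice argument to close.
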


\begin{proof}
By \autoref{prop:dbp1}, $j^*A \in \Db(C) \cong \Db(\bbP^1)$ is a direct sum of shifts of (fat) closed points $\sky{nP}$ and line bundles $\cO_C(k)$.
Note that by \cite[Thm. 4]{orlov2009generators}, $\cO_C(k) \oplus \cO_C(k')$ with $k\neq k'$ is a weak generator of $\Db(C)$, and therefore also $\cO_C(k) \oplus \cO_{nP}$.
Hence \autoref{prop:frb-blowup-codimtwo} implies that $j^*A$ fits into one of the cases listed in the statement.
Now using that 
$\FrbO{\pi^*}A = \sod{\pi^* \Db(X), i_* q^* \Db(C) \cap (\pi^* A)^\perp}$
and \eqref{eq:blowuporth} yield the claimed shapes of the Frobenius neighbourhoods. To see this, note that $\thick{\sky{nP}} = \sod{\sky{P}}$ whose orthogonal is $\sod{\cO_Q \mid Q \neq P}$; and that $\thick{j^* A} = \sod{\cO_C(k)}$ for $j^*A$ a direct sum of shifts of a single line bundle $\cO_C(k)$.

By  \autoref{prop:frb-blowup-codimtwo} the minimal and maximal Frobenius neighbourhoods are attained. 
To obtain the second one in the list, take $A = \bigoplus \cO_{P_i}$ for a finite collection of closed points in $C \subset X$.
Then $j^* A$ is a direct sum of shifts of those skyscraper sheaves.

For the remaining case, let $j^*A$ be a direct sum of shifts of a single line bundle $\cO_C(k)$.
As $j^*$ commutes with $\otimes$ and $\cHom$ (and therefore $j^*(A^\vee) = (j^* A)^\vee$), there is a minimal non-negative integer $k_0$ such that we obtain all $\cO_C(mk_0)$ with $m \in \bbZ$ by $j^*$.
In fact, $k_0$ is positive, take for example $j^* A$ for $A$ an ample line bundle on $X$.

Using the projection $\Db(\tiX) = \sod{\pi^* \Db(X), i_*q^* \Db(C)} \to i_*q^* \Db(C) \cong \Db(\bbP^1)$, $\Frbposet{\pi^*}$ becomes in a natural way a subposet of $\thickposet{\bbP^1}$. 
Its image consists of the thick subcategories
$0$, $\sod{\cO_{\bbP^1}(mk_0)}$, $\sod{\cO_P \mid P \in V}$ and $\Db(\bbP^1)$, where $m \in \bbZ$ and $V$ is any subset of closed points of $\bbP^1$ with finite, non-empty complement.

By passing from $\Frbposet{\pi^*}$ to $\complFrbposet{\pi^*}$, $V$ can be an arbitrary subset.
Note that we do not only use arbitrary intersections here, but also unions: otherwise the thick subcategory $\sod{\cO_P \mid \text{$P$ closed point of $\bbP^1$} }$ would not be an element of $\complFrbposet{\pi^*}$.
If $k_0 = 1$, then the lattices $\complFrbposet{\pi^*}$ and $\thickposet{\bbP^1}$ are isomorphic under this projection, but even for $k_0>1$, they are isomorphic as abstract lattices.
\end{proof}

\begin{remark}
In the proof of \autoref{prop:rational-curve-threefold}, it seems that we cannot expect that we can obtain all $\cO_C(k)$ using pullbacks $j^*A$ with $A \in \Db(X)$.
So even though $\Frbposet{\pi^*}$ encodes the lattice $\thickposet{C}$, it might be that it also remembers something about the embedding $C \hra X$. 

Let $\pi \colon \tiX \to X$ be the blowup of a smooth projective variety of $\dim(X) \geq 3$ in $C \cong \bbP^1$. Then using the projection $\Db(\tiX) = \sod{\pi^* \Db(X), i_*q^* \Db(C), \ldots, i_* q^* \Db(C) \otimes \cO_q(\dim(X)-3)} \to i_*q^* \Db(C) \cong \Db(\bbP^1)$, we obtain by the same arguments as in the proof of \autoref{prop:rational-curve-threefold} a surjection $\complFrbposet{\pi^*} \to \thickposet{\bbP^1}$.
\end{remark}

\begin{remark}
In \autoref{ex:p1bundle}, for the $\bbP^1$-bundle $q \colon \bbP(\cO \oplus \cO(r)) \to \bbP^1$, we also obtain a natural map of lattices $\complFrbposet{q^*} \to \thickposet{\bbP^1}$.
This map is injective and only for $r=0$ an isomorphism.
\end{remark}

\begin{example}
We consider the standard flip of $C_1 \cong \bbP^1$ inside a threefold $X_1$, see \cite[\Sec 11.3]{huybrechts2006fourier}:
\[
\begin{tikzcd}[column sep=small, row sep=scriptsize]
 & && E \ar[d, hook, "i"] \ar[lllddd, "q_1"'] \ar[rrrddd, "q_2"]\\
 & && \tilde{X} \ar[lldd, "\pi_1"] \ar[rrdd, "\pi_2"']\\
\\
C_1 \ar[r, hook, "j_1"'] & X_1 && && X_2 & C_2 \ar[l, hook', "j_2"]
\end{tikzcd}
\]
As $\tiX$ is the blowup of $C_1 \hra X_1$ and also of the flipped $C_2 \hra X_2$, we have
\[
\begin{split}
\Db(\tiX) 
&= \sod{\pi_1^* \Db(X_1), i_*q_1^* \Db(C_1)} = \sod{\pi_1^* \Db(X_1), \cO_E(k,0), \cO_E(k+1,0)}\\
&= \sod{\pi_2^* \Db(X_2), i_*q_2^* \Db(C_2)} = \sod{\pi_2^* \Db(X_2), \cO_E(0,l), \cO_E(0,l+1)}
\end{split}
\]
where $k,l \in \bbZ$ arbitrary. Here we use the semiorthogonal decomposition coming from the blowup $\tiX \to X$ and the standard exceptional sequence for $\Db(\bbP^1)$.
Moreover, we can compare both $\Frbposet{\pi_1^*}$ and $\Frbposet{\pi_2^*}$, as they consist of thick subcategories of $\Db(\tiX)$.
Using the list of \autoref{prop:rational-curve-threefold}, one can check that the only common element, besides $\Db(\tiX)$, is 
\[
\begin{split}
\cO_E^\perp 
&= \sod{\pi_1^* \Db(X_1), \cO_E(-1,0)} = \FrbO{\pi_1^*}{\cO_{X_1}}\\
&= \sod{\pi_2^* \Db(X_2), \cO_E(0,-1)} = \FrbO{\pi_2^*}{\cO_{X_2}}.
\end{split}
\]
Summing up, we get that 
\[
\Frbposet{\pi_1^*} \cap \Frbposet{\pi_2^*} = \{ \cO_E^\perp \subset \Db(\tiX) \}.
\]
Note that $\cO_E^\perp \in \Frbposet{\pi_1^*} \cap \Frbposet{\pi_2^*}$ is the minimal (geometric) subcategory of $\Db(\tiX)$ containing both $\pi_1^*\Db(X_1)$ and $\pi_2^*\Db(X_2)$.
To see this, consider the thick subcategory $\cC$ of $\Db(\tiX)$ generated by both.
Projecting $\cC$ onto $i_* q^*_l \Db(C_l) \simeq \Db(\bbP^1)$ for $l=1,2$, shows that $\cC$ can be written as a Frobenius neighbourhood, in particular, $\cC \in \Frbposet{\pi_1^*} \cap \Frbposet{\pi_2^*}$.
So $\cO_E^\perp$ is a minimal noncommutative resolution of $X_1$ and $X_2$.
\end{example}

We conclude this section with a rough statement about the Frobenius \cod in case that the codimension of the center is bigger than $2$.

\begin{proposition}
\label{prop:blowup-highcodim}
Let $\pi \colon \tiX \to X$ be the blowup in a smooth center $Z$ of codimension $c>2$.
Then the Frobenius \cod of $\pi^*$ is $\Frb{\pi^*} = \pi^*\Db(X) \oplus \ker \pi_* \cap \ker \pi_!$.
Moreover, $\ker \pi_* \cap \ker \pi_!$ is non-zero.
\end{proposition}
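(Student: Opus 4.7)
The first equality is immediate from \autoref{cor:fff-frobenius-decomposition}. Orlov's decomposition \eqref{eq:orlov-blowup-kerL} exhibits $\pi^*$ as an exceptional functor, so applying that theorem with $\F = \pi^*$, $\R = \pi_*$ and $\L = \pi_!$ at once yields $\Frb{\pi^*} = \pi^*\Db(X) \oplus (\ker\pi_* \cap \ker\pi_!)$, with no further work required.

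For the non-vanishing of the intersection, the plan is to exhibit an explicit object supported on the exceptional divisor $E$. Writing $\Omega_q$ for the relative cotangent sheaf of the $\bbP^{c-1}$-bundle $q\colon E \to Z$, take
\[
B \coloneqq i_*\Omega_q(1).
\]
Using $\pi \circ i = j \circ q$, the projection formula, and the identity $i^*\cO_{\tiX}(E) = \cO_q(-1)$ recalled at the start of the section, both relevant pushforwards reduce to fibrewise computations:
\[
\pi_* B = j_* q_*\Omega_q(1), \qquad \pi_! B = \pi_*\bigl(B \otimes \cO_{\tiX}((c-1)E)\bigr) = j_* q_*\Omega_q(2-c).
\]

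The vanishings $q_*\Omega_q(1) = 0$ and $q_*\Omega_q(2-c) = 0$ then reduce, via flat base change on the smooth fibration $q$, to Bott vanishing on $\bbP^{c-1}$: the group $H^p(\bbP^{c-1},\Omega^1(k))$ is non-zero only when $p=0$ with $k\geq 2$, $p=1$ with $k=0$, or $p=c-1$ with $k\leq 1-c$. For $c>2$ both twists $k=1$ and $k=2-c$ lie strictly inside the vanishing interval $[2-c,1]\setminus\{0\}$, so both pushforwards vanish. Since $\Omega_q(1)$ is a non-zero coherent sheaf on $E$ and $i_*$ is fully faithful, $B \neq 0$, completing the argument.

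I do not foresee a serious obstacle; the only delicate point is keeping track of the sign in $i^*\cO_{\tiX}(E) = \cO_q(-1)$, but this is pinned down in the preamble. As a sanity check, the bound $c>2$ is sharp: for $c=2$ the twist $k = 2-c = 0$ falls precisely on Bott's exceptional value $H^1(\bbP^1,\Omega^1) = \field$, which is exactly what forces $\ker\pi_*\cap \ker\pi_! = 0$ in \autoref{prop:frb-blowup-codimtwo}.
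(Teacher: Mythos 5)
Your proof is correct, and the witness object $i_*\Omega_q(1)$ is exactly the $k=1$ member of the family $i_*\Omega_q^k(k)$, $k=1,\dots,c-2$, that the paper exhibits; the strategy is the same and only the bookkeeping differs. The paper places $i_*\Omega_q^k(k)$ in $\ker\pi_!$ by locating it inside the semiorthogonal complement $\sod{\Psi_0(\Db(Z)),\dots,\Psi_{c-2}(\Db(Z))}=\ker\pi_!$ using the (wedged, twisted) relative Euler sequence, and it proves the vanishing $q_*\Omega_q^k(m)=0$ for $1\le m\le k$ by a self-contained downward induction from $\Omega_q^{c-1}(m)\cong\omega_q(m)$. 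You instead compute $\pi_!$ directly as $\pi_*(\blank\otimes\cO_{\tiX}((c-1)E))$, reduce both pushforwards to $q_*\Omega_q(k)$ for $k=1$ and $k=2-c$, and kill these by Bott's formula on the fibres together with cohomology and base change; both twists indeed lie in the vanishing range precisely because $c>2$. Both routes are fine: yours imports Bott vanishing as a black box where the paper's Euler-sequence induction is elementary, but yours makes transparent why $c=2$ is excluded ($k=2-c=0$ hits the non-vanishing $H^1(\bbP^1,\Omega^1)$), matching \autoref{prop:frb-blowup-codimtwo}. Two small nits: the reduction of $q_*\Omega_q(k)=0$ to the fibres is cohomology and base change (Grauert), not flat base change; and $i_*$ for the divisorial embedding $i\colon E\hra\tiX$ is \emph{not} fully faithful on derived categories, but it is exact and faithful on coherent sheaves, which is all that is needed to conclude $i_*\Omega_q(1)\neq0$.
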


\begin{proof}
The shape $\Frb{\pi^*} = \pi^*\Db(X) \oplus \ker \pi_* \cap \ker \pi_!$ follows directly from \autoref{cor:fff-frobenius-decomposition}.
We claim that for $k=1,\ldots,c-2$ the objects $i_*\Omega_q^k(k)$ lie inside $\ker \pi_* \cap \ker \pi_!$.

First we have a closer look at $\Omega_q^k(k)$. Taking wedge powers of the relative Euler sequence:
\[
0 \to \Omega_q \to q^*\cN^\vee\otimes \cO_q(-1) \to \cO_q \to 0,
\]
and twisting by $\cO_q(m)$, produces the short exact sequence:
\begin{equation}
\label{eq:ses-omegakm}
0 \to \Omega_q^k(m) \to q^*\wedge^k \cN^\vee\otimes \cO_q(m-k) \to \Omega_q^{k-1}(m) \to 0.
\end{equation}
Now, pushing this forward along $q$, and using projection formula on the middle term, yields a triangle:
\[
q_*(\Omega_q^k(m)) \to \wedge^k \cN^\vee\otimes q_*(\cO_q(m-k)) \to q_*(\Omega_q^{k-1}(m)).
\]
In particular, for all $0 \leq m < k \leq c-1$, which implies $1-c \leq m-k < 0$, we have $q_*(\cO_q(m-k)) = 0$ and so we see that
\begin{equation}
\label{eqn:vanishingpushforwards} 
q_*(\Omega_q^k(m)) = 0 
\quad 
\text{for all } 1 \leq m \leq k.
\end{equation}
Indeed, if $k = c-1$ then $\Omega_q^{c-1}(m) = \omega_q(m) = \cO_q(m-c)$ and $q_*(\cO_q(m-c)) = 0$ in the given range.
The other cases follow by induction.

Next we apply $i_*$ to \eqref{eq:ses-omegakm} which yields the triangle
\[
i_* \Omega_q^k(m) \to i_* \left(q^*\wedge^k \cN^\vee\otimes \cO_q(m-k)\right) \to i_*\Omega_q^{k-1}(m).
\]
So by another induction, we conclude that
\[
i_* \Omega_q^k(k) \in \ker \pi_! = \sod{\Psi_0(\Db(Z)),\Psi_1(\Db(Z)),\ldots,\Psi_{c-2}(\Db(Z))}.
\]
for $k=1,\ldots,c-2$, as $\Psi_k = i_*(q^*(\blank) \otimes \cO_q(k))$.
Finally by \eqref{eqn:vanishingpushforwards} we find that
\[
\pi_* i_* \Omega_q^k(k) = j_* q_* \Omega_q^k(k) = 0
\]
so $i_* \Omega_q^k(k) \in \ker \pi_*$, as well.
\end{proof}

\begin{remark}
The objects $i_*\Omega_q^k(k)$ inside $\ker \pi_* \cap \ker \pi_!$ are not exceptional (one might be mislead by the fact that in case of a projective bundle the $\Omega_q^k(k)$ form a full exceptional sequence).

Nevertheless, we conjecture that $i_*\Omega_q^k(k)$ with $k=1,\ldots,c-2$ generate $\ker \pi_* \cap \ker \pi_!$ and that $\ker \pi_* \cap \ker \pi_!$ is not admissible in $\Db(\tiX)$.
\end{remark}


\subsection{Example: linkage class}
\label{sec:linkage}

Let $Y$ be a hypersurface of degree $n$ in $\bbP \coloneqq \bbP^{2n-1}$ with $n\geq 3$,
given by the inclusion $j \colon Y \hra \bbP$.
It is well-known that there is a semi-orthogonal decomposition:
\[
\Db(Y) = \sod{\cA_Y,\cO_Y,\cO_Y(1),\ldots,\cO_Y(n-1)}\,,
\]
see for example \cite[\Sec 4]{kuznetsov2004threefold} or \cite[Thm 2.13]{kuznetsov2010abel}.
Moreover, $\cA_Y$ is a connected $(2n-4)$-Calabi-Yau category, i.e.\ the Serre functor $\S_{\cA_Y}$ is just a shift by $2n-4$.
For more background in the case $n=3$, see the article \cite{kuznetsov2010derived}.

\begin{proposition}[{\cite[Cor. 11.4]{huybrechts2006fourier}, \cite[\Sec 3]{kuznetsov2009atiyah}, \cite[Rem. 5.2]{kuznetsov2010abel}}]
\label{prop:linkage-triangle}
Let $B \in \Db(Y)$ be an object.
Then there  is a morphism $e = e_B \colon B \to B \otimes \cO_Y(-Y)[2]$, called the \emph{linkage class} of $B \in \Db(Y)$, which fits functorially into the triangle
\[
j^* j_* B \to B \xra{\ e_B\ } B \otimes \cO_Y(-n)[2],
\]
where the arrow $j^*j_* B \to B$ is the counit of adjunction. 
\end{proposition}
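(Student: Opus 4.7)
The plan is to construct the triangle from the standard two-term locally free resolution
\[
0 \to \cO_\bbP(-n) \xra{\ s\ } \cO_\bbP \to j_* \cO_Y \to 0,
\]
where $s$ is the defining section of $Y$, and then to promote it to a natural triangle of exact endofunctors of $\Db(Y)$ via the Fourier--Mukai formalism, which is available under the enhancement assumption of the paper. As a first step, since $j^* s$ vanishes on $Y$, applying $j^*$ to the associated distinguished triangle in $\Db(\bbP)$ yields
\[
\cO_Y(-n) \xra{\ 0\ } \cO_Y \to j^* j_* \cO_Y \to \cO_Y(-n)[1]
\]
in $\Db(Y)$, whence $j^* j_* \cO_Y \simeq \cO_Y \oplus \cO_Y(-n)[1]$. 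The triangle identity for $j^* \dashv j_*$ identifies the counit $\eps_{\cO_Y} \colon j^* j_* \cO_Y \to \cO_Y$ with the projection onto the first summand, so rotating the triangle establishes the statement for $B = \cO_Y$ with $e_{\cO_Y} = 0$.

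To upgrade this to a functorial triangle in $B$, I would pass to Fourier--Mukai kernels. Derived base change applied to the Cartesian square attached to $j$ shows that the derived self-intersection $Y \times^L_\bbP Y$ is set-theoretically supported on the diagonal with dg-structure sheaf $\cO_Y \otimes^L_{\cO_\bbP} \cO_Y \simeq \cO_Y \oplus \cO_Y(-n)[1]$. Pushing forward along $\Delta \colon Y \hookrightarrow Y \times Y$ then identifies the Fourier--Mukai kernel of $j^* j_*$ as $\Delta_*(\cO_Y \oplus \cO_Y(-n)[1])$, and the counit $\eps$ corresponds, via the standard dictionary, to $\Delta_*$ applied to the projection $\cO_Y \oplus \cO_Y(-n)[1] \to \cO_Y$. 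Its cone in $\Db(Y \times Y)$ is $\Delta_* \cO_Y(-n)[2]$, the Fourier--Mukai kernel of the endofunctor $(\blank) \otimes \cO_Y(-n)[2]$. Converting back yields a triangle of exact functors $j^* j_* \to \id_{\Db(Y)} \to (\blank) \otimes \cO_Y(-n)[2]$, and evaluating at $B$ produces the claim with $e_B$ defined naturally as the connecting morphism.

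The main obstacle will be carefully matching the adjunction data with the Fourier--Mukai picture so that the counit $\eps$ really corresponds to $\Delta_*$ of the projection on the level of kernels. This identification is standard but technical (see, e.g., \cite{caldararu2010mukai}); once it is in hand, the distinguished triangle and the functoriality of $e_B$ follow formally from the triangulated structure of $\Db(Y \times Y)$ and the passage back to the category of exact functors.
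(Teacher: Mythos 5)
The paper offers no proof of this proposition; it is quoted directly from the cited references, and your argument is essentially the standard proof found there (notably Huybrechts, Cor.~11.4): resolve $j_*\cO_Y$ by $\cO_\bbP(-n)\to\cO_\bbP$ and promote the resulting two-term structure of $j^*j_*$ to the level of Fourier--Mukai kernels. The argument is correct; the only technical point worth noting is that you do not actually need the splitting of the kernel as $\Delta_*(\cO_Y\oplus\cO_Y(-n)[1])$ (which amounts to formality of the derived self-intersection and requires a separate justification, e.g.\ that the normal bundle of a divisor extends to its first infinitesimal neighbourhood): since the kernel $K$ of $j^*j_*$ has exactly two cohomology sheaves, $\cH^0(K)=\Delta_*\cO_Y$ and $\cH^{-1}(K)=\Delta_*\cO_Y(-n)$, the functorial truncation triangle already provides $\Delta_*\cO_Y(-n)[1]\to K\to\Delta_*\cO_Y\to\Delta_*\cO_Y(-n)[2]$, and a short computation shows $\Hom(K,\Delta_*\cO_Y)\cong\Hom(\Delta_*\cO_Y,\Delta_*\cO_Y)$ is one-dimensional, so the nonzero map of kernels induced by the counit must agree with the truncation map up to scalar and its cone is $\Delta_*\cO_Y(-n)[2]$ as required.
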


Let now $\ii \colon \cA_Y \to \Db(Y)$ be the inclusion coming from the semi-orthogonal decomposition.
Note that for the exceptional functor $\ii$, 
the canonical triangle of \autoref{prop:one-natural-serre} is
\[
\ii \S_{\cA_Y} A \to \S_Y \ii A \to \T \S_Y \ii A,
\]
where $\T$ denotes the twist functor associated to $\ii$.
Using that $\S_{\cA_Y} = [2n-4]$ and $\S_{Y} = (\blank) \otimes \cO_Y(-n)[2n-2]$, the triangle becomes (after shift and rotation):
\begin{equation}
\label{eq:w-triangle}
\T (\ii A  \otimes \cO_Y(-n))[1] \to \ii A \xra{w} \ii A \otimes  \cO_Y(-n)[2].
\end{equation}

\begin{proposition}
\label{prop:compare-linkage}
For $A \in \cA_Y$, 
the linkage class $e_{\ii A}$ and $w$ coincide.
In particular, $j^* j_* \ii A \cong \T(\ii A \otimes \cO_Y(-n))[1]$ and so the Frobenius neighbourhood of $A$ in $\Db(Y)$ is given by 
\[
\FrbO\ii A = \lorth (j^*j_* A)\,.
\]
\end{proposition}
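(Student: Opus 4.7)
The plan is to establish the identity $e_{\ii A}=w$ as morphisms $\ii A\to\ii A\otimes\cO_Y(-n)[2]$; this forces the cocones of the linkage triangle and \eqref{eq:w-triangle} to be isomorphic, yielding $j^*j_*\ii A\simeq\T(\ii A\otimes\cO_Y(-n))[1]$. Combined with $\FrbO\ii A=\lorth\T\S_Y\ii A$ from \autoref{prop:one-natural-serre}, the identification $\T\S_Y\ii A=\T(\ii A\otimes\cO_Y(-n))[2n-2]$, and shift-invariance of $\lorth$, the formula $\FrbO\ii A=\lorth(j^*j_*A)$ follows.

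First I would unwind what $w$ is. Substituting $\S_{\cA_Y}=[2n-4]$ and $\S_Y=(\blank)\otimes\cO_Y(-n)[2n-2]$ into the canonical triangle $\ii\S_{\cA_Y}A\to\S_Y\ii A\to\T\S_Y\ii A$ of \autoref{prop:one-natural-serre} and shifting by $[4-2n]$ produces \eqref{eq:w-triangle}, so $w$ is the shift of the canonical Serre duality comparison, adjoint to the identity under the iso $\S_{\cA_Y}A\simeq\ii^r\S_Y\ii A$. In particular, by the uniqueness of the semiorthogonal decomposition $\Db(Y)=\sod{\ker\ii^r,\im\ii}$ of \autoref{prop:kerimsods}, showing $j^*j_*\ii A\in\cA_Y^\perp=\ker\ii^r$ would identify its isomorphism class with that of $\T(\ii A\otimes\cO_Y(-n))[1]$, which is our goal.

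To prove the vanishing $\Hom_Y(\ii A',j^*j_*\ii A)=0$ for all $A'\in\cA_Y$, I would apply $\Hom_Y(\ii A',\blank)$ to the linkage triangle. Both end terms identify, via Serre duality on $\Db(Y)$ and the $(2n-4)$-Calabi-Yau structure of $\cA_Y$, with $\Hom_{\cA_Y}(A',A)$, so the vanishing reduces to showing that postcomposition with $e_{\ii A}$ induces an isomorphism on this group. The parallel argument applied to the $w$-triangle shows that $w$ induces such an iso tautologically, since its cocone $\T(\ii A\otimes\cO_Y(-n))[1]$ sits in $\cA_Y^\perp$; so the comparison of $e_{\ii A}$ with $w$ at the level of Hom-spaces is the crucial input.

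The main obstacle is verifying that $e_{\ii A}$ indeed induces the identity on $\Hom_{\cA_Y}(A',A)$ (equivalently, corresponds to $\id_A\in\End_{\cA_Y}(A)$) under the Serre duality identification. My approach is to use the naturality of the linkage class: $e$ is a natural transformation on $\Db(Y)$ of the form $\xi\otimes\id$ for the universal extension class $\xi\in\Ext^2_Y(\cO_Y,\cO_Y(-n))$ arising from the normal bundle sequence, and $\xi$ is itself Serre-dual to $\id_{\cO_Y}$. Chasing this naturality through the Serre pairings on $\Db(Y)$ and $\cA_Y$—keeping careful track of signs and the normalization of the canonical iso $\ii^r\S_Y\ii\simeq\S_{\cA_Y}$—should give the required identification, after which the rest of the proof collapses to the semiorthogonal uniqueness statement.
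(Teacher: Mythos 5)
Your overall architecture matches the paper's: identify $e_{\ii A}$ with $w$, deduce that the cocones of the two triangles agree, and conclude via \autoref{prop:one-natural-serre} together with shift-invariance of orthogonals. The gap is precisely in the step you yourself flag as ``the crucial input'': that postcomposition with $e_{\ii A}$ induces an isomorphism $\Hom^*(\ii A',\ii A)\to\Hom^*(\ii A',\ii A\otimes\cO_Y(-n)[2])$ for all $A,A'\in\cA_Y$, equivalently that $j^*j_*\ii A\in\cA_Y^\perp$. This is not a formal consequence of the $(2n-4)$-Calabi--Yau property of $\cA_Y$ plus naturality: knowing that the two Hom-spaces are abstractly isomorphic via Serre duality does not show that the particular map $e_{\ii A}$ realises that isomorphism. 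It is a substantive theorem of Kuznetsov, \cite[Prop. 5.8]{kuznetsov2010abel} (with functoriality supplied by \cite[Prop. 3.1]{kuznetsov2009atiyah}), proved there by an actual computation with the resolution of $j_*\cO_Y$; the paper cites it as a black box and then observes that both $e_{\ii A}$ and $w$ are the image of $\id_{\ii A}$ under the resulting functorial identification.

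Moreover, the mechanism you propose for closing this gap starts from a false description of the linkage class. The class $e$ is not of the form $\xi\otimes\id$ for some $\xi\in\Hom_Y(\cO_Y,\cO_Y(-n)[2])=H^2(Y,\cO_Y(-n))$: that group vanishes for $n\geq 3$, being squeezed between $H^2(\bbP^{2n-1},\cO(-n))$ and $H^3(\bbP^{2n-1},\cO(-2n))$, which are both zero since $0<2,3<2n-1$. So such a $\xi$ would force $e=0$, contradicting the isomorphism you need. The linkage class is a natural transformation of Fourier--Mukai functors, i.e.\ a degree-two extension class on $Y\times Y$ supported on the diagonal, built from the conormal extension $0\to\cO_Y(-n)\to j^*\Omega_{\bbP^{2n-1}}\to\Omega_Y\to 0$; in particular the assertion that ``$\xi$ is Serre-dual to $\id_{\cO_Y}$'' has no content in the group you name. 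A secondary, minor point: your fallback route via uniqueness of the decomposition triangle for $\sod{\ker\ii^r,\im\ii}$ only identifies the two triangles up to an automorphism of $\ii A$, which suffices for $j^*j_*\ii A\cong\T(\ii A\otimes\cO_Y(-n))[1]$ and hence for the formula for $\FrbO\ii A$, but does not by itself give the stated equality $e_{\ii A}=w$ on the nose.
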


\begin{proof}
By \cite[Prop. 5.8]{kuznetsov2010abel}, $e_{\ii A}$ induces an isomorphism
\[
\Hom^*(\ii A',\ii A) \xra{\sim} \Hom^*(\ii A',\ii A \otimes \cO_Y(-n)[2])
\]
which is functorial in $A \in \cA_Y$ by \cite[Prop. 3.1]{kuznetsov2009atiyah}.
In particular for $A' = A$, we get that $\id_{\ii A}$ is mapped to $e_{\ii A}$.
As $w$ is defined by adjunction, it is the image of $\id_{\ii A}$, as well.

The second part follows now directly from \autoref{prop:one-natural-serre}, noting that orthogonals are independent of shifts.
\end{proof}

\begin{remark}
The linkage class is defined for all $B \in \Db(Y)$.
One can extend the definition of $w$ as in \eqref{eq:w-triangle} 
to any $B \in \Db(Y)$ by first projecting onto $\cA_Y$ using $\ii^\ell\colon \Db(Y) \to \cA_Y$. 
\end{remark}

\begin{question}
The linkage class exists in much greater generality, namely for any inclusion $j \colon Y \hra M$ as a locally complete intersection, see \cite[\Sec 3]{kuznetsov2009atiyah}.
Can the analogous triangle of \autoref{prop:linkage-triangle} always be realised using some exceptional functor $\F \colon \cA_Y \to \Db(Y)$?

By \autoref{prop:compare-linkage} and \autoref{cor:fff-frobenius-decomposition}, $\cA_Y$ has to be contained in $\lorth \im(j^*j_*)$.
\end{question}


\section{Spherelike functors}

\subsection{Definition and examples}
\begin{definition}
\label{dfn:sphlike}
Let $\F \colon \cA \to \cB$ be a functor with both adjoints.
If the cotwist $\C$ is an autoequivalence of $\cA$ then we say that $\F$ is \emph{spherelike}. 

If additionally, $\R$ and $\C\L[1]$ are isomorphic, then we say that $\F$ is \emph{spherical}.
\end{definition}

Both conditions on a functor $\F$ to be spherical imply that $\R$ and $\L$ only differ by an autoequivalence. This property is also known as \emph{quasi-Frobenius}.
There is always a natural way to compare $\R$ and $\C\L[1]$, namely by
the canonical map 
\[
\phi\coloneqq \gamma_\R \L\circ\R\eta_\L\colon\R \to \R\F\L \to \C\L[1]
\] 
The dual version to $\varphi$ is the canonical map 
\[
\psi\coloneqq\eps_\L \R\circ\L\beta_\R\colon \L\T[-1] \to \L\F\R \to \R.
\]

\begin{proposition}[{\cite[Prop.\ A.2]{meachan2016note}}]
If $\F \colon \cA \to \cB$ is spherical, in particular there is some isomorphism $\R \simeq \C\L[1]$, then also the canonical map $\phi \colon \R \to \C\L[1]$ is an isomorphism.
\end{proposition}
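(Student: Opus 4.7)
The plan is to adapt the proof of Proposition~\ref{prop:frobenius-isos}, accounting for the cotwist $\C$ being a non-trivial autoequivalence rather than zero. Let $\alpha \colon \R \xra{\sim} \C\L[1]$ be the given abstract isomorphism; we want to show that the canonical map $\phi = \gamma_\R\L \circ \R\eta_\L$ is an isomorphism.

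First, I would exploit the naturality of $\alpha$ applied to the unit $\eta_\L \colon \id_\cB \to \F\L$ to obtain the commutative square
\[
\begin{tikzcd}
\R \ar[r, "\R\eta_\L"] \ar[d, "\alpha"', "\wr"] & \R\F\L \ar[d, "\alpha\F\L", "\wr"']\\
\C\L[1] \ar[r, "\C\L\eta_\L[1]"'] & \C\L\F\L[1].
\end{tikzcd}
\]
Since $\alpha$ and $\alpha\F\L$ are isomorphisms, showing that $\R\eta_\L$ is an iso is equivalent to showing the bottom arrow $\C\L\eta_\L[1]$ is. Because $\phi = \gamma_\R\L \circ \R\eta_\L$, the task is then reduced to analysing the composition along the bottom of this square together with a suitable whiskering of $\gamma_\R\L$ on the right column.

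The next step is to feed in the cotwist triangle $\C\L \to \L \xra{\eta_\R\L} \R\F\L \xra{\gamma_\R\L} \C\L[1]$, obtained by whiskering the defining triangle of $\C$ with $\L$. Combined with the triangle identities for $\F \dashv \R$ and the fact that $\C$ is an autoequivalence (so that $\C^{-1}[-1]$ is available and can be used to ``shift'' identifications back to the identity functor), one should identify the relevant composition with an isomorphism up to the autoequivalence $\C$ and a shift, thereby concluding that $\phi$ is an isomorphism.

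The main obstacle is this last step. In the exceptional case, the analogous reduction landed cleanly on the statement ``$\L\eta_\L$ is an iso'', which came for free from $\C' = 0$ via Lemma~\ref{lem:exc-Leta-iso}. Here the dual cotwist $\C'$ is non-zero (for a general spherelike functor it need not even be an autoequivalence), so no such collapse to an identity is available; one must instead carry the autoequivalence $\C$ and the shift $[1]$ through the diagram chase and absorb them at the end. This careful combinatorics is exactly the technical content of \cite[Prop.~A.2]{meachan2016note}.
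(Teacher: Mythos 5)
The paper does not actually prove this statement --- it is imported verbatim from \cite[Prop.~A.2]{meachan2016note} --- so a blind proof would have to supply the argument in full, and yours does not: the final paragraph openly defers ``the careful combinatorics'' to the very reference being cited. More importantly, the reduction you set up points in a direction that cannot succeed. In the exceptional case, $\phi=\eta_\R^{-1}\L\circ\R\eta_\L$ has its second factor $\eta_\R^{-1}\L$ already invertible, so everything collapses to showing $\R\eta_\L$ is an isomorphism (\autoref{prop:frobenius-isos}). In the spherical case $\phi=\gamma_\R\L\circ\R\eta_\L$ and \emph{neither} factor is an isomorphism: whiskering the cotwist triangle with $\L$ gives $\C\L\to\L\xra{\eta_\R\L}\R\F\L\xra{\gamma_\R\L}\C\L[1]$, so $\gamma_\R\L$ could only be invertible if $\L=0$; and for a $d$-spherical object $A$ with $\F=\F_A\colon\Db(\field\mod)\to\cA$ one computes $\R\F\L(B)\cong\Hom^*(B,A)^\vee\otimes\Hom^*(A,A)$, which has twice the dimension of $\R(B)\cong\Hom^*(A,B)$, so $\R\eta_\L$ is not an isomorphism either. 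Your sentence ``showing that $\R\eta_\L$ is an iso is equivalent to showing the bottom arrow is'' is therefore a true equivalence between two false statements, and any strategy routed through proving either one is doomed. The actual content of the proposition is that $\R\eta_\L$ \emph{splits} the displayed triangle, i.e.\ realises $\R\F\L\simeq\L\oplus\C\L[1]$ with $\gamma_\R\L$ as the projection; proving that splitting is where all the work lies.

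Your naturality square itself is fine, and it does legitimately transport the problem to showing that the endomorphism $\alpha^{-1}\circ\phi$ of $\R$ (equivalently, $\gamma_\R\L\circ(\alpha\F\L)^{-1}\circ\C\L\eta_\L[1]$ of $\C\L[1]$) is invertible. That is a reasonable opening move, but the proposal contains no argument for this invertibility, which is the entire point. The missing ingredient is how to force it: for instance via the triangle identities together with the fact that $\phi\F$ is an isomorphism whenever $\C$ is an autoequivalence (\cite[Lem.~A.1]{meachan2016note}, as used in the proof of \autoref{prop:sph}), or by feeding two of the four conditions into \autoref{thm:twoimplyfour}. As written, the proposal is a correct one-square setup followed by a citation of the result it was supposed to prove.
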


\begin{theorem}[{\cite[Thm.~1.1]{anno2013spherical}}]
\label{thm:twoimplyfour}
Let $\F\colon\cA\to\cB$ be a functor with both adjoints.
If $\F$ satisfies two of the following four conditions then $\F$ satisfies all four of them:
\begin{enumerate}
\item the cotwist $\C$ is an autoequivalence of $\cA$,
\item the canonical map $\phi\colon\R \to \C\L[1]$ is an isomorphism,
\item the twist $\T$ is an autoequivalence of $\cB$,  
\item the canonical map $\psi\colon \L\T[-1] \to \R$ is an isomorphism.
\end{enumerate}
In particular, such an $\F$ is spherical.
\end{theorem}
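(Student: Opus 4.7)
The plan is to reduce all six pairwise implications to a single master implication, with the others following by duality and by a mixing argument. The key tool throughout is \autoref{lem:naturalisos}, which provides natural isomorphisms linking the twist $\T$ to the cotwist $\C$ (notably $\T\F \simeq \F\C[2]$ and $\R\T \simeq \C\R[2]$), together with the adjoint analogues involving $\T'$ and $\C'$.

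The master implication I would tackle first is (1) and (2) $\Rightarrow$ (3) and (4). Assuming $\C$ is an autoequivalence and $\phi \colon \R \xra\sim \C\L[1]$ is an isomorphism, I would combine $\phi$ with $\R\T \simeq \C\R[2]$ to obtain $\C\R[2] \simeq \R\T \simeq \C\L\T[1]$; invertibility of $\C$ then yields $\R \simeq \L\T[-1]$, and chasing through the compatibility between $\phi$, $\psi$ and the natural isomorphisms identifies this isomorphism with $\psi$, establishing (4). For (3), I would use $\T\F \simeq \F\C[2]$ together with the dual twist triangle $\T' \to \id \to \F\L$ to construct an explicit quasi-inverse of $\T$ built out of $\T'$ and $\C^{-1}$, thereby upgrading $\T$ to an autoequivalence.

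The symmetric implication (3) and (4) $\Rightarrow$ (1) and (2) then follows by applying the master implication to the left adjoint $\L$ of $\F$, whose cotwist and twist are (shifts of) the inverses of $\T$ and $\C$, and whose canonical maps correspond under adjunction to $\psi,\phi$ for $\F$. The four mixed pairs, such as (1) and (3), or (2) and (4), are handled by the same compatibility mechanism: the natural isomorphism $\R\T \simeq \C\R[2]$ implies that $\phi$ and a suitable shift of $\psi$ compose to the identity on $\R$ up to the autoequivalences $\C$ and $\T$, so knowing either the $\C$--$\T$ invertibility or the $\phi$--$\psi$ isomorphism status on each side is enough to force the rest.

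The main obstacle is coherence bookkeeping. Every implication above relies on pasting together diagrams of natural transformations built from units, counits, shifts, and the natural isomorphisms of \autoref{lem:naturalisos}, and one needs these pastings to commute canonically rather than merely up to some non-canonical isomorphism of cones. This is precisely why the theorem is stated in \cite{anno2013spherical} in the setting of dg-enhanced categories (or of Fourier--Mukai kernels as in \cite{addington2011new}), where cones are functorial and the required diagrams lift to honest homotopy-coherent commutativity.
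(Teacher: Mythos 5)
First, note that the paper does not prove this statement at all: it is quoted verbatim from \cite[Thm.~1.1]{anno2013spherical}, so there is no internal proof to compare yours against. Judged on its own terms, your outline correctly identifies the overall shape of the known argument — the isomorphisms of \autoref{lem:naturalisos}, the duality between $\F$ and its adjoints, and the need for an enhancement to make the diagram pastings coherent — but the steps that carry all the mathematical content are asserted rather than carried out, so as it stands this is a roadmap and not a proof.

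Concretely: (i) from $\phi$ invertible and $\R\T \simeq \C\R[2]$ you deduce the \emph{existence} of an isomorphism $\R \simeq \L\T[-1]$, and then say that ``chasing through the compatibility'' identifies it with $\psi$. An abstract isomorphism of functors does not imply that a prescribed natural transformation between them is invertible; upgrading the former to the latter is exactly the content of nontrivial statements such as \cite[Prop.~A.2]{meachan2016note} (quoted in the paper for $\phi$), and the analogue for $\psi$ needs its own argument — moreover the natural version of that argument presupposes that $\T$ is already known to be an equivalence, which you have not established at that point in your order of deductions. (ii) The claim that $\T$ is an autoequivalence because one can ``construct an explicit quasi-inverse built out of $\T'$ and $\C^{-1}$'' is precisely the heart of \cite{anno2013spherical}: the quasi-inverse is $\T'$ itself, and proving $\T\T' \simeq \id_\cB \simeq \T'\T$ requires verifying that several octahedra and unit/counit squares commute in the enhancement; it is not a formal consequence of $\T\F \simeq \F\C[2]$. (iii) The four mixed pairs are dismissed in one sentence, whereas they need separate arguments; and the duality reduction of $(3){+}(4) \Rightarrow (1){+}(2)$ is itself delicate, since $\L$ is only guaranteed a right adjoint (namely $\F$), so the theorem cannot simply be ``applied to $\L$'' without further hypotheses, and the twist and cotwist of $\L$ are $\C'$ and $\T'$, which are inverse to $\C$ and $\T$ only once sphericalness is already known. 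In short, the proposal correctly locates where the difficulty lies but does not resolve it; for a complete argument one should simply defer to \cite{anno2013spherical}, as the paper does.
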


The theorem above shows that one can define spherical functors in at least $4 \choose 2$ different ways. However, we stick to the (classical) definition because in most applications, the spherical functor $\F \colon \cA \to \cB$ starts from a small source category with simple cotwist $\C$ and produces an interesting autoequivalence $\T$ of the target category.

\begin{theorem}[{\cite[Thm. 2.10]{segal2016all}}]
Let $\T$ be an autoequivalence of $\cB$. Then there is a category $\cA$ and a spherical functor $\F \colon \cA \to \cB$ with twist $\T$.
\end{theorem}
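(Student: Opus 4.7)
The plan is to exploit \autoref{thm:twoimplyfour}: it suffices to construct a functor $\F\colon\cA\to\cB$ with both adjoints such that the twist is (isomorphic to) the prescribed $\T$ and the cotwist $\C$ is an autoequivalence, since then sphericalness follows automatically. In other words, one wants a \emph{realisation} of the given $\T$ as arising from a triangle $\F\R \to \id_\cB \to \T$ for some $\F$ whose cotwist is also an autoequivalence.

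The natural idea is to build $\cA$ out of a dg-enhancement of $\cB$ together with the autoequivalence $\T$ itself. Fix a dg-enhancement $\underline{\cB}$ of $\cB$ and a dg-endofunctor lifting $\T$, which we again denote by $\T$. Then define $\underline{\cA}$ as the dg-category associated to the ``graph'' (or mapping-cylinder) of $\T$: concretely, pairs $(B_0,B_1,\sigma)$ with $B_i \in \underline{\cB}$ and $\sigma$ a chosen closed morphism $B_0 \to \T B_1$, with morphisms the obvious chain maps of such data, or more invariantly the homotopy pullback/pushout in the $\infty$-category of dg-categories that encodes $\T$ as a gluing datum. The functor $\F \colon \cA \to \cB$ is then a forgetful/projection-type functor reading off one of the two components.

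The key steps are: (i) write down the right and left adjoints $\R$ and $\L$ of $\F$ explicitly at the dg-level, using the freeness of the other component in the dg-model of $\cA$; (ii) compute $\F\R$ together with its counit, and check that the triangle $\F\R \xra{\eps_\R} \id_\cB \to \T$ reproduces the prescribed autoequivalence on the nose; (iii) compute the cotwist $\C$ on $\cA$ and recognise it as a (shifted) equivalence built out of $\T$, hence an autoequivalence of $\cA$. Once (ii) and (iii) are in hand, \autoref{thm:twoimplyfour} upgrades $\F$ to a spherical functor with twist $\T$, which is exactly the claim.

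The main obstacle is step (i): once $\underline{\cA}$ is fixed as a dg-category, one must carefully construct both adjoints and verify that they interact correctly with the triangulated structure induced on $\cA = H^0(\underline{\cA})$. This is where the dg-enhancement of $\cB$ is essential — without it the universal construction of $\cA$ from $\T$ would be ambiguous, and adjoints would be difficult to produce at the level of triangulated categories alone. This is precisely the content of Segal's argument in \cite[Thm.~2.10]{segal2016all}; the verification of the twist and the autoequivalence property of the cotwist is then a formal consequence of the mapping-cone description together with the triangle identities of \autoref{lem:naturalisos}.
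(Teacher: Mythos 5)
The paper gives no proof of this statement---it is imported verbatim from Segal \cite{segal2016all}---and your outline is a faithful sketch of Segal's actual argument: glue two copies of (an enhancement of) $\cB$ along $\T$ into an upper-triangular/mapping-cylinder dg-category $\cA$, let $\F$ be a projection functor, compute both adjoints explicitly at the dg level, and conclude sphericalness from the two-out-of-four criterion of \autoref{thm:twoimplyfour}. So your approach coincides with the cited source; the only caveat is that the substantive verifications (in particular the shift in the gluing bimodule needed so that the twist comes out as $\T$ itself rather than a shift of it, and the invertibility of the cotwist) are deferred rather than carried out.
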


\begin{example}
\label{ex:spherelike-object}
Let $A \in \cA$ be an object. Then $A$ is
\begin{itemize}
\item \emph{$d$-spherelike} if $\Hom^*(A,A) \cong \field[t]/t^2$ with $\deg t = d$;
\item \emph{$d$-Calabi-Yau} if $A[d]$ is a Serre dual of $A$;
\item \emph{$d$-spherical} if $A$ is $d$-spherelike and $d$-Calabi-Yau.
\end{itemize}
If $A$ is spherelike, proper and admits an anti-Serre dual $\S^{-1} A$ then the functor
\[
\F = \F_A \colon \Db(\field\mod) \to \cA, V^\bullet \mapsto V^\bullet \otimes A
\]
is spherelike with adjoints $\R = \R_A = \Hom^*(A,\blank)$ and $\L = \L_A = \Hom^*(\S^{-1}A, \blank) = \Hom^*(\blank,A)^\vee$.
To see this, by the triangle $\C \to \id \to \R\F$  one can conclude that $\C = [-d-1]$ is an autoequivalence.
With this, one can check that an isomorphism $R \cong \C\L[1]$ translates into a $d$-Calabi-Yau property of $A$, in which case $A$ is spherical.
\end{example}

\subsection{Spherical \cods}
\label{sec:spherelike-global}
Recall that if $\F\colon\cA\to\cB$ is a functor with both adjoints then we have canonical maps $\phi \colon \R \to \C\L[1]$ and $\psi \colon \L\T[-1] \to \R$. Using $\phi$, we can measure the difference between $\R$ and $\C\L[1]$ with the triangle:
\begin{equation}
\label{Qtriangle}
\Q\to \R\xra\phi \C\L[1].
\end{equation} 
and dually there is the triangle involving $\psi$:
\[
\L\T[-1] \xra\psi \R \to \Q'.
\]

\begin{definition}
If $\F$ is spherelike then we call $\Sph \F \coloneqq \ker \Q$ the \emph{spherical \cod} of $\F$ and $\F \cores{\Sph \F}$ the \emph{spherical corestriction} of $\F$.
\end{definition}

\begin{remark}
In particular, a spherelike functor $\F$ is spherical if and only if $\Q\simeq0$, which is equivalent to $\ker\Q =\cB$.
\end{remark}

\begin{theorem}
\label{prop:sph}
Let $\F\colon \cA\to\cB$ be a spherelike functor. Then $\im \F\subset\Sph \F$ and the corestriction $\F\cores{\Sph \F} \colon \cA\to\Sph \F$ is spherical. Furthermore, if $\cC$ is a full subcategory of $\cB$ such that $\im \F\subset \cC$ and the corestriction $\F\cores{\cC} \colon \cA\to\cC$ is spherical then $\cC\subset\Sph \F$. That is, $\Sph \F$ is the maximal full subcategory on which $\F$ becomes spherical.
\end{theorem}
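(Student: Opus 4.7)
The plan is to mirror the structure of the proof of \autoref{prop:frobenius-nbhd-functor} adapted to the spherelike setting. First I would establish the inclusion $\im\F\subset\Sph\F$, i.e.\ $\Q\F\simeq 0$; then realise $\F$ as a composition $\F_2\F_1$ with $\F_2\colon\Sph\F\hra\cB$ the inclusion and verify that $\F_1\coloneqq\F\cores{\Sph\F}$ is spherical; and finally handle maximality by running the same reasoning in reverse.

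The key step, and the one I expect to be the main obstacle, is showing $\Q\F\simeq 0$, equivalently $\phi\F\colon\R\F\to\C\L\F[1]$ being an isomorphism. This is the spherelike analogue of the identification $\P\F\simeq\R\T'\F\simeq\R\F\C'[-2]=0$ that made the exceptional case of \autoref{prop:frobenius-nbhd-functor} immediate. I would factor $\phi=(\gamma_\R\L)\circ(\R\eta_\L)$ and apply the octahedral axiom: the fibres of the two pieces are $\R\T'$ (from $\T'\to\id\to\F\L$) and $\L$ (from the rotated cotwist $\id\to\R\F\to\C[1]$), yielding a triangle $\R\T'\to\Q\to\L$. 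Precomposing with $\F$ on the right and invoking \autoref{lem:naturalisos} to rewrite $\R\T'\F\simeq\R\F\C'[-2]$, the vanishing of $\Q\F$ reduces to showing that a canonical connecting map $\L\F\to\R\F\C'[-1]$ is an isomorphism. To do this I would compare the rotated cotwist $\C\to\id\to\R\F\xra{\gamma_\R}\C[1]$ with the triangle $\C\C'\to\C\L\F[1]\xra{\C\eps_\L[1]}\C[1]$ obtained by applying $\C[1]$ to the rotated dual cotwist; the identity $\gamma_\R=\C\eps_\L[1]\circ\phi\F$, a consequence of naturality and the triangle identity $\F\eps_\L\circ\eta_\L\F=\id_\F$, lets $\phi\F$ slot in as the middle vertical, and TR3 then identifies $\Q\F$ with the fibre of an induced canonical map $\id\to\C\C'$. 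The hard part will be to see that the spherelike hypothesis, namely $\C$ being an autoequivalence, forces this map to be an isomorphism.

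Once $\im\F\subset\Sph\F$ is established the rest falls out cleanly. Full faithfulness of $\F_2$ gives $\R_1\simeq\R\F_2$ and $\L_1\simeq\L\F_2$, so the cotwist $\C_1$ of $\F_1$ coincides with $\C$ (still an autoequivalence), while $\phi_1=\phi\F_2$ is by definition an isomorphism on $\Sph\F=\ker\Q$. Conditions (1) and (2) of \autoref{thm:twoimplyfour} then yield that $\F_1$ is spherical. For maximality, if $\cC$ is a full subcategory of $\cB$ containing $\im\F$ on which $\F\cores{\cC}$ is spherical, then the analogous $\phi$ is an isomorphism on $\cC$, whence $\cC\subset\ker\Q=\Sph\F$.
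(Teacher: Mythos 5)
Your overall architecture matches the paper's: establish $\im\F\subset\Sph\F$, corestrict, transport the unit along the fully faithful inclusion $\F_2\colon\ker\Q\to\cB$ to get $\C_1\simeq\C$ and $\phi_1=\phi\F_2$, and run the same comparison backwards for maximality. Those parts are fine (the identification $\phi_1=\phi\F_2$ deserves the explicit compatibility-of-units diagrams that the paper writes out, but that is routine), and your appeal to \autoref{thm:twoimplyfour} at the end is harmless since conditions (1) and (2) are literally the definition of spherical.

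The genuine gap is the step you yourself flag as ``the hard part''. The paper does not derive $\Q\F\simeq 0$ from first principles: it quotes \cite[Lemma A.1]{meachan2016note}, which states exactly that $\phi\F\colon\R\F\to\C\L\F[1]$ is an isomorphism whenever the cotwist $\C$ is an equivalence. Your octahedron reduction is sound as far as it goes --- the identity $\C\eps_\L[1]\circ\phi\F=\gamma_\R$ does follow from naturality of $\gamma_\R$ and the triangle identity $\F\eps_\L\circ\eta_\L\F=\id_\F$, and it does identify $\Q\F[1]$ with the cone of a canonical map $\id_\cA\to\C\C'$ --- but this only reformulates the cited lemma rather than proving it. The assertion that invertibility of $\C$ forces this particular natural transformation $\id_\cA\to\C\C'$ to be an isomorphism is precisely the nontrivial content; it does not follow formally from $\C$ being an autoequivalence, since a priori an arbitrary map into $\C\C'$ need not be invertible, and one must genuinely exploit the adjunctions $\L\dashv\F\dashv\R$ to conclude. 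Since you neither supply that argument nor cite the lemma, the proof of $\im\F\subset\Sph\F$ --- and hence of the whole theorem, as every subsequent step depends on $\phi$ being an isomorphism on $\ker\Q\supset\im\F$ --- is incomplete at its one essential point.
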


Note that in particular, as $\F\cores{\Sph \F}$ is spherical, its twist is an autoequivalence of $\Sph \F$.

\begin{proof}
First, we show that $\im \F\subset \Sph \F$. Precompose \eqref{Qtriangle} with $\F$ to get the triangle: 
\[\Q\F\to \R\F\xra{\phi \F}\C\L\F[1].\] 
Now, \cite[Lemma A.1]{meachan2016note} shows that the second map is an isomorphism which is equivalent to $\Q\F\simeq0$. Therefore, $\im \F\subset \ker \Q=:\Sph \F$ and 
$\F\colon \cA\to\cB$ naturally corestricts to a functor $\F_1\coloneqq \F\cores{\Sph \F}\colon \cA\to\Sph \F$. 

Next we show that $\F_1$ is spherelike, that is, the cotwist $\C_1$ is an autoequivalence. If $\F_2\colon \ker \Q\to\cB$ denotes the 
inclusion then we have a natural isomorphism of functors $\F\simeq \F_2\F_1$ and the right adjoint of $\F_1$ is given by $\R_1\simeq \R\F_2$. That is, we have natural isomorphisms $\R\F\simeq \R\F_2\F_1\simeq \R_1\F_1$ and the composition $\R\F\simeq \R_1\F_1$ is compatible with both unit morphisms. Indeed, because $\F_2\colon \ker \Q\to \cB$ is fully faithful, we have the following commutative diagram:
\[
\begin{tikzcd}
\Hom(\F,\F) \ar[r] \ar[d, "\wr"'] & \Hom(\id_\cA,\R\F) \ar[dd, "\wr"] 
  & \id_\F \ar[r, mapsto] \ar[d, mapsto] & \eta \ar[dd, mapsto]\\
\Hom(\F_2\F_1,\F_2\F_1) \ar[d, "\wr"'] & 
  & \id_{\F_2\F} \ar[d, mapsto]&\\
\Hom(\F_1,\F_1) \ar[r] & \Hom(\id_\cA,\R_1\F_1) 
  & \id_{\F_1} \ar[r, mapsto] & \eta_1
\end{tikzcd}
\] 
Therefore, we have a commutative diagram of triangles:
\[
\begin{tikzcd}
\C \ar[r]\ar[d] & \id_\cA \ar[r, "\eta"] \ar[d, equal] & \R\F \ar[d, "\wr"]\\
\C_1 \ar[r] & \id_\cA \ar[r, "\eta_1"'] & \R_1\F_1
\end{tikzcd}
\] 
Since the second and third vertical maps are isomorphisms, we can conclude that the first vertical map is also an isomorphism. The cotwist of $\F$ is an autoequivalence by assumption and so it follows that the cotwist of $\F_1$ is an autoequivalence as well. 

It remains to show that the canonical map $\phi_1\colon \R_1\to \C_1\L_1[1]$ is an isomorphism. This also follows from the compatibility of units. Indeed, the same argument as above shows that we have natural isomorphisms $\R_1\F_1\L_1\simeq \R\F_2\F_1\L\F_2\simeq \R\F\L\F_2$ which are compatible with the units:
\[
\begin{tikzcd}[column sep=large]
\R_1 \ar[r, "\R_1\eta_{\L_1}"] \ar[d, "\wr"'] & \R_1\F_1\L_1 \ar[r, "\gamma_{\R_1}\L_1"] \ar[d, "\wr"] & \C_1\L_1[1] \ar[d, "\wr"]\\
\R\F_2 \ar[r, "\R\eta_\L\F_2"'] & \R\F\L\F_2 \ar[r, "\gamma_\R\L\F_2"'] & \C\L\F\F_2[1].
\end{tikzcd}
\] 
In particular, since $\F_2\colon \ker \Q\to \cB$ is faithful, we see that $\phi_1\colon \R_1\to \C_1\L[1]$ coincides with $\phi\colon  \R\to \C\L[1]$ on the subcategory $\ker \Q$, that is, $\phi_1=\phi\F_2$. Moreover, since $\phi$ is an isomorphism on $\ker \Q$ it follows that $\phi_1$ is as well.

For maximality, we let $\smash{\F}_1\coloneqq \F\cores\cC\colon \cA\to \cC$ be a corestriction of $\F$. If $\smash{\tilde\F}_2\colon \cC\to\cB$ denotes the fully faithful embedding then a similar argument as above shows that we have $\smash{\tilde\phi}_1=\phi \smash{\tilde\F}_2$. Moreover, if $\smash{\F}_1$ is spherical then $\smash{\tilde\phi}_1(B)=\phi(\smash{\tilde\F}_2(B))$ is an isomorphism for all $B\in\cC$ which is equivalent to $\Q(\smash{\tilde\F}_2(B))=0$. Therefore, we see that $\cC\subset\ker \Q=:\Sph \F$.
\end{proof}

\begin{remark}
\label{rmk:kerQ=kerP}
\label{rmk:QandQ'}
Instead of using the triangle $\Q \to \R \to \C\L[1]$, 
we could have started this section also with the triangle $\L\T[-1]\to\R\to\Q'$.
By the same line of arguments as in \autoref{prop:sph}, we arrive at the statement that the corestriction $\F\cores{\ker\Q'}$ is spherical, since $\psi \colon \L\T[-1] \to \R$ becomes an isomorphism on $\ker\Q'$. Moreover, $\ker\Q'$ is maximal with this property.
By \autoref{thm:twoimplyfour}, we also have an isomorphism $\phi \colon \R \to \C\L[1]$ on $\ker\Q'$, so by the maximality property of both kernels we arrive at $\ker\Q' = \Sph \F =  \ker\Q$.
\end{remark}

\subsection{Spherical \nbhds}
\label{sec:spherelike-local}
In close analogy to \autoref{sec:frobenius-local}, we can also look at spherical \nbhds of objects under spherelike functors.

\begin{definition}
\label{def:spherical-nbhd}
Let $\F \colon \cA \to \cB$ be a spherelike functor and $A \in \cA$.
The \emph{spherical \nbhd} of $A$ under $\F$ is 
\[
\SphO{\F}A \coloneqq \{ B \in \cB \mid \Hom^*(A,\Q B)=0 \}.
\]
\end{definition}

\begin{remark}
To avoid confusion, we stress that in general $\F A$ will \emph{not} be a spherical object inside its spherical \nbhd $\FrbO{\F}A$.
In order that $\F A$ can be a spherical object inside $\FrbO{\F}A$ it is necessary that $\F A$ is a spherelike object in $\cB$.
\end{remark}

\begin{remark}
\label{rem:spherical-nbhd-finitely}
The spherical \cod of $\F$ is again the intersection of the spherical \nbhds of the objects in $\cA$ by Yoneda:
\[
\Sph \F = \ker\Q = \bigcap_{A \in\cA} \SphO{\F}A.
\]
If $A \in \cA$ is a weak generator, then we also find that $\Sph{\F} = \SphO{\F}A$. To see this note that $B \in \SphO{\F}A$ if $\Hom^*(A,\Q B)=0$, which in turn implies that $\Q B = 0$ as $A$ is a weak generator, hence $B \in \ker \Q = \Sph{\F}$.
Here we only use that $A$ is a weak generator for $\im \Q$.
\end{remark}

\begin{proposition}
If $\F \colon \cA \to \cB$ is a spherelike functor and $A \in \cA$ then $\SphO{\F}A$ is the maximal full subcategory of $\cB$ such that 
\[
\Hom^*(A,\R\res{\SphO \F A}(\_)) \simeq \Hom^*(A,\C\L\res{\SphO \F A}(\_))[-1].
\]
\end{proposition}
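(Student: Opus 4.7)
The plan is to mirror the proof of the Frobenius analogue (\autoref{prop:frobenius-nbhd-object}), substituting the defining triangle $\Q\to\R\xra{\phi}\C\L[1]$ from \eqref{Qtriangle} for the triangle $\R\T'\to\R\to\L$ used there. As a preliminary sanity check I would verify that $\F A\in\SphO\F A$, so that the subcategory is non-trivial and, in particular, contains $\im\F$: by \autoref{prop:sph} we have $\im\F\subset\Sph\F=\ker\Q$, and any object of $\ker\Q$ trivially satisfies $\Hom^*(A,\Q B)=0$, hence lies in every $\SphO\F A$.

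Next I would apply $\Hom^*(A,\blank)$ to the triangle $\Q\to\R\to\C\L[1]$ to get the triangle of graded vector spaces
\[
\Hom^*(A,\Q(\blank))\to\Hom^*(A,\R(\blank))\xra{\phi_*}\Hom^*(A,\C\L(\blank)[1]).
\]
For $B\in\SphO\F A$ the left-hand term vanishes in every degree by the definition of $\SphO\F A$, so $\phi_*$ becomes an isomorphism after restricting the source argument to $\SphO\F A$. This delivers the claimed isomorphism on $\SphO\F A$.

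For the maximality clause I would take any full triangulated subcategory $\cC\subset\cB$ on which the displayed isomorphism holds, and feed an arbitrary $C\in\cC$ back into the long exact sequence arising from the triangle above. Since $\phi_*$ is an isomorphism (in every degree, because we are working with $\Hom^*$), the left-hand term must vanish, i.e.\ $\Hom^*(A,\Q C)=0$, so $C\in\SphO\F A$. Hence $\cC\subset\SphO\F A$, proving maximality.

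I do not expect a serious obstacle: the argument is formally parallel to the Frobenius case, and the only point requiring a moment of care is that the hypothesis of being an isomorphism on $\Hom^*$ and the definition of $\SphO\F A$ both involve \emph{graded} vanishing, so that the long exact sequence implication goes through cleanly in all degrees at once.
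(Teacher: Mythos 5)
Your proposal is correct and follows exactly the paper's argument: the paper's proof simply says to mimic the Frobenius case using the triangle $\Hom^*(A,\Q(\_)) \to \Hom^*(A,\R(\_)) \to \Hom^*(A,\C\L(\_))[-1]$ obtained from \eqref{Qtriangle}, which is precisely what you do (your preliminary check that $\im\F\subset\SphO\F A$ via \autoref{prop:sph} matches the corresponding step in \autoref{prop:frobenius-nbhd-object}). The only cosmetic discrepancy is the placement of the shift on the third term, which is a convention issue and does not affect the argument.
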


\begin{proof}
The proof of this statement is very similar to \autoref{prop:frobenius-nbhd-object}.
Indeed, the triangle to use is:
\[
\Hom^*(A,\Q(\_)) \to \Hom^*(A,\R(\_)) \to \Hom^*(A,\C\L(\_))[-1]. 
\qedhere
\]
\end{proof}

\subsubsection{In presence of Serre functors}
We specialise to the case that $\cA$ and $\cB$ admit Serre functors.

\begin{theorem}
\label{prop:spherelike-serre-Q}
Let $\F \colon \cA \to \cB$ be a spherelike functor.
If $\cA$ and $\cB$ admit Serre functors, then there is a natural triangle
\[
\F\S_\cA\C^{-1}[-1] \to \S_\cB\F \to \Q^r\S_\cA 
\]
where $\Q^r$ is the right adjoint of $\Q$.
In particular, we obtain $\SphO \F A = \lorth \Q^r\S_\cA A $ for $A \in \cA$
and that $\F\S_\cA\C^{-1}A[-1]$ is a Serre dual for $\F A$ inside $\SphO \F A$.
\end{theorem}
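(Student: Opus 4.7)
The plan is to mirror the argument of \autoref{prop:one-natural-serre}, replacing the exceptional triangle $\R\T' \to \R \to \L$ by the spherelike triangle $\Q \to \R \to \C\L[1]$ from \eqref{Qtriangle}, and then manipulating it via adjunctions and Serre duality. The overall strategy is: rewrite the third term using $\L \simeq \S_\cA^{-1}\R\S_\cB$, pass to right adjoints so that all functors go from $\cA$ to $\cB$, and finally precompose with $\S_\cA$ to match the stated form.

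First, I would substitute $\L \simeq \S_\cA^{-1} \R \S_\cB$ on the right-hand term and commute $\C$ past $\S_\cA^{-1}$, using that any autoequivalence of $\cA$ commutes with the Serre functor up to natural isomorphism (by uniqueness of Serre functors, since $\C\S_\cA\C^{-1}$ is itself a Serre functor on $\cA$). This rewrites the triangle as
\[
\Q \to \R \to \S_\cA^{-1}\C\R\S_\cB[1].
\]
Taking right adjoints reverses the arrows; using $\L^r = \F$, $\R^r \simeq \S_\cB\F\S_\cA^{-1}$, and once more commuting $\C^{-1}$ past $\S_\cA$, precomposing the resulting triangle with $\S_\cA$ produces the desired natural triangle
\[
\F\S_\cA\C^{-1}[-1] \to \S_\cB\F \to \Q^r\S_\cA.
\]

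The orthogonality statement then follows immediately from Serre duality: for $A \in \cA$ and $B \in \cB$,
\[
\Hom^*(A,\Q B) \cong \Hom^*(\Q B, \S_\cA A)^\vee \cong \Hom^*(B,\Q^r\S_\cA A)^\vee,
\]
so the vanishing condition defining $\SphO \F A$ is equivalent to $B \in \lorth\Q^r\S_\cA A$. For the Serre dual claim, I would apply $\Hom^*(B,\blank)$ to the triangle evaluated at $A$; for $B \in \SphO \F A = \lorth\Q^r\S_\cA A$ the right-hand term vanishes, yielding the functorial isomorphism
\[
\Hom^*(B,\F\S_\cA\C^{-1}A[-1]) \simeq \Hom^*(B,\S_\cB\F A) \simeq \Hom^*(\F A, B)^\vee,
\]
which is precisely the universal property characterising $\F\S_\cA\C^{-1}A[-1]$ as a Serre dual of $\F A$ inside $\SphO \F A$.

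The only delicate point is the commutation $\C \S_\cA \simeq \S_\cA \C$, together with careful bookkeeping of the right adjoints of shifts and of the autoequivalence $\C$; neither is a substantive obstacle. Everything else is a formal manipulation entirely parallel to the exceptional case already treated in \autoref{prop:one-natural-serre}.
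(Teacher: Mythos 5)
Your proposal is correct and follows essentially the same route as the paper: take right adjoints of the triangle $\Q \to \R \to \C\L[1]$, identify $\R^r \simeq \S_\cB\F\S_\cA^{-1}$ (the paper uses $\L^r = \F$ directly rather than first substituting $\L \simeq \S_\cA^{-1}\R\S_\cB$, but this is an immaterial detour), precompose with $\S_\cA$ using that $\C$ commutes with the Serre functor, and then deduce the orthogonality and Serre-dual statements exactly as you do.
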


\begin{proof}
Taking right adjoints of $\Q \to \R \to \C\L[1]$ gives
$\F \C^{-1}[-1] \to \R^r \to \Q^r$.
Here we use that $\R^r = \S_\cB \F \S_{\cA}^{-1}$, in particular $\Q^r$ also exists. 
We continue our calculation
\begin{align*}
\Q \to \R \to \C\L[1]
&\iff \F \C^{-1}[-1] \to \S_\cB \F \S_{\cA}^{-1} \to \Q^r \tag{taking right adjoints}\\
&\iff \F \S_{\cA} \C^{-1}[-1] \to \S_\cB \F \to \Q^r\S_\cA \tag{precomposing with $\S_\cA$}
\end{align*}
In the last step we used that Serre functors commute with autoequivalences.
For $A \in \cA$ we have
\[
\Hom^*(A,\Q(\_)) = \Hom^*(\Q(\_),\S_\cA(A))^\vee = 
\Hom^*(\_,\Q^r\S_\cA(A))^\vee,
\]
so $\SphO \F A = \ker\Hom^*(A,\Q(\_)) = \lorth\Q^r\S_\cA(A)$.
With the same reasoning as in \autoref{rem:frobenius-nbhd-object-Serre} we complete the proof.
\end{proof}

\begin{remark}
Let $\F\colon \cA \to \cB$ be a spherelike functor.
Note that if $\S_{\cA} \C^{-1}A = A[d]$ for some $d$,
then $\SphO \F A$ is the maximal full subcategory of $\cB$ where $\F A$ is $d$-Calabi-Yau.
In such a case, we call $\SphO \F A$ the \emph{Calabi-Yau \nbhd} of $\F A$ in $\cB$.
\end{remark}

\subsubsection{Dual spherical \nbhds}

If we use the triangle $\Q'\to\L\T[-1]\to\R$ instead then we arrive at the following definition and statements:
\begin{enumerate}
\item $\SphOd \F A \coloneqq \{ B \in \cB : \Hom^*(\Q',A)^\vee = 0 \}$,
\item $\Hom^*(\L\T|_{\SphOd \F A},A)^\vee[-1] \xra\sim \Hom^*(\R|_{\SphOd \F A},A)^\vee)$,
\item $\SphOd \F A = (\Q'^\ell \S_\cA^{-1}A)^\perp$ and the anti-Serre dual of $\F A$ is $\F\S_\cA^{-1}\C A[1]$.
\end{enumerate}

\subsection{They go together}
Most of our examples will be a composition of a spherical functor with an exceptional one.

\begin{proposition}
\label{prop:composition-cotwists-twists}
Suppose $\F_1 \colon \cA \to \cB$ and $\F_2 \colon \cB \to \cC$ are functors with both adjoints $\L_1, \R_1$ and $\L_2, \R_2$, as usual, and let $\T_i$ and $\C_i$ be the twist and cotwist associated to $\F_i$ for $i=1,2$. If we consider the composition $\F = \F_2 \circ \F_1 \colon \cA \to \cC$ together with its twist $\T$ and cotwist $\C$ then we have the following triangles:
\[
\C_1 \to \C \to \R_1 \C_2 \F_1 
\qquad\text{and}\qquad 
\F_2 \T_1 \R_2 \to \T \to \T_2.
\]
In particular, if $\F_2$ is exceptional, then there is an isomorphism $\C_1 \simeq \C$. So in this case, if $\F_1$ is exceptional or spherelike  then also $\F$ is exceptional or spherelike, respectively.
\end{proposition}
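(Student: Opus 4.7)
The plan is to derive both triangles from the octahedral axiom applied to a three-term composition, and then use fully faithfulness of $\F_2$ to collapse one term in the exceptional case. First I would recall the standard fact that the unit and counit of the composed adjunction $\F_2\F_1 \dashv \R_1\R_2$ factor through the adjunctions of the two pieces: concretely, $\eta = \R_1 \eta_2 \F_1 \circ \eta_1 \colon \id_\cA \to \R_1\F_1 \to \R_1\R_2\F_2\F_1$, and dually $\eps = \eps_2 \circ \F_2 \eps_1 \R_2 \colon \F_2\F_1\R_1\R_2 \to \F_2\R_2 \to \id_\cC$. This is the essential compatibility I need.

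For the cotwist triangle, I would consider the composable pair
\[
\id_\cA \xra{\eta_1} \R_1\F_1 \xra{\R_1\eta_2\F_1} \R_1\R_2\F_2\F_1 = \R\F
\]
whose composite is $\eta$. The cone of $\eta_1$ is $\C_1[1]$ by definition, the cone of $\eta$ is $\C[1]$, and the cone of $\R_1\eta_2\F_1$ is $\R_1\C_2\F_1[1]$ because $\R_1(\blank)\F_1$ is exact and cones of $\eta_2$ give $\C_2[1]$. The octahedral axiom then produces a triangle $\C_1[1] \to \C[1] \to \R_1\C_2\F_1[1]$, which after desuspending is exactly $\C_1 \to \C \to \R_1\C_2\F_1$.

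For the twist triangle, I would run the dual argument on the factorisation of $\eps$: apply the octahedron to
\[
\F\R = \F_2\F_1\R_1\R_2 \xra{\F_2\eps_1\R_2} \F_2\R_2 \xra{\eps_2} \id_\cC,
\]
whose cones are, respectively, $\F_2\T_1\R_2$ (from $\F_2(\blank)\R_2$ applied to $\F_1\R_1 \to \id_\cB \to \T_1$), $\T_2$, and $\T$. The resulting octahedral triangle is $\F_2\T_1\R_2 \to \T \to \T_2$.

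Finally, for the ``in particular'' statement, if $\F_2$ is exceptional then it is fully faithful, so $\eta_2 \colon \id_\cB \to \R_2\F_2$ is an isomorphism and hence $\C_2 = 0$; the triangle $\C_1 \to \C \to \R_1\C_2\F_1$ then forces $\C_1 \simeq \C$. Since $\F = \F_2\F_1$ inherits both adjoints $\L_1\L_2 \dashv \F \dashv \R_1\R_2$ from its factors, the vanishing or autoequivalence property of $\C$ transfers directly from $\C_1$, so $\F$ is exceptional (resp. spherelike) whenever $\F_1$ is. The only delicate step is the verification of the unit/counit compatibility above; once that is in hand, everything else is a routine application of the octahedral axiom.
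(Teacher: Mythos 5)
Your proposal is correct and matches the paper's argument: the paper likewise applies the octahedral axiom to the factorisation $\eta = \R_1\eta_2\F_1\circ\eta_1$ (and dually for $\eps$), yielding exactly the two $3\times 3$ diagrams of triangles you describe, and then sets $\C_2=0$ when $\F_2$ is fully faithful. No gaps.
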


\begin{proof}
Naturality of units and counits together with the octahedral axiom provides us with the following commutative diagrams of triangles:
\[
\begin{tikzcd}
\C_1 \ar[r] \ar[d, equal] & \C \ar[r] \ar[d] & \R_1\C_2\F_1 \ar[d] 
  & \F\R \ar[r] \ar[d, equal] & \F_2\R_2 \ar[r] \ar[d] & \F_2\T_1\R_2 \ar[d] \\ 
\C_1 \ar[r] & \id_\cA \ar[r] \ar[d] & \R_1\F_1 \ar[d] 
  & \F\R \ar[r] & \id_\cC \ar[r] \ar[d] & \T \ar[d]\\ 
& \R\F \ar[r, equal] & \R_1\R_2\F_2\F_1 
  & & \T_2 \ar[r, equal] & \T_2.
\end{tikzcd}
\]
Now observe that if $\F_2$ is exceptional then $\C_2=0$, and hence $\C \simeq \C_1$. 
\end{proof}

\begin{proposition}
\label{prop:compare-asphs}
Let $\F_1 \colon \cA \to \cB$ be a functor with both adjoints and $\F_2 \colon \cB \to \cC$ be an exceptional functor.
Then there is the triangle
\[
\R_1 \P_2 \to \Q \to \Q_1 \L_2.
\]
In particular, we get $\Q\F_2 \simeq \Q_1$ and $\R_1 \P_2 \simeq \Q \T_2'$,
and consequently $\F_2(\ker\Q_1) \subset \ker\Q$.
\end{proposition}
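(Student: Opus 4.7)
The plan is to reduce the construction of the triangle to an application of the octahedral axiom. Since $\F_2$ is exceptional we have $\C_2 = 0$, so by \autoref{prop:composition-cotwists-twists} the composite $\F = \F_2\F_1$ satisfies $\R = \R_1\R_2$, $\L = \L_1\L_2$ and $\C \simeq \C_1$. Under these identifications, the canonical map $\phi \colon \R \to \C\L[1]$ becomes a morphism $\R_1\R_2 \to \C_1\L_1\L_2[1]$, and the key technical step is to verify that it factors as
\[
\R_1\R_2 \xra{\R_1\phi_2} \R_1\L_2 \xra{\phi_1\L_2} \C_1\L_1\L_2[1].
\]
This is the main obstacle. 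To establish it, I would unpack $\phi = \gamma_\R\L \circ \R\eta_\L$, decompose the unit as $\eta_\L = \F_2\eta_{\L_1}\L_2 \circ \eta_{\L_2}$, and use the identification $\R\F = \R_1\R_2\F_2\F_1 \simeq \R_1\F_1$ coming from $\eta_{\R_2} \colon \id \xra\sim \R_2\F_2$ (under which $\gamma_\R$ corresponds to $\gamma_{\R_1}$). A Godement-style interchange argument then rearranges the resulting composition into $(\phi_1\L_2) \circ (\R_1\phi_2)$.

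Granting this factorisation, the octahedral axiom applied to the composable pair $(\R_1\phi_2, \phi_1\L_2)$ yields a triangle among their cocones: the cocone of $\R_1\phi_2$ is $\R_1\P_2$ (apply the exact functor $\R_1$ to $\P_2 \to \R_2 \to \L_2$), the cocone of $\phi_1\L_2$ is $\Q_1\L_2$ (apply $(\blank)\L_2$ to $\Q_1 \to \R_1 \to \C_1\L_1[1]$), and the cocone of their composite $\phi$ is $\Q$ by definition. The octahedron then produces the desired triangle
\[
\R_1\P_2 \to \Q \to \Q_1\L_2.
\]

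For the three consequences I would argue as follows. Precomposing the main triangle with $\F_2$ gives $\R_1\P_2\F_2 \to \Q\F_2 \to \Q_1\L_2\F_2$; since $\P_2\F_2 = 0$ by \autoref{prop:frobenius-nbhd-functor} (as $\im\F_2 \subset \ker\P_2$) and $\L_2\F_2 \simeq \id_\cB$ by the fully faithfulness of $\F_2$, this collapses to $\Q\F_2 \simeq \Q_1$. The inclusion $\F_2(\ker\Q_1) \subset \ker\Q$ is then immediate. For $\R_1\P_2 \simeq \Q\T_2'$ I would avoid tracing through the octahedron and argue directly: \autoref{cor:two-naturals} applied to $\F_2$ gives $\P_2 \simeq \R_2\T_2'$, hence $\R_1\P_2 \simeq \R\T_2'$; and precomposing the defining triangle $\Q \to \R \to \C\L[1]$ with $\T_2'$ gives $\Q\T_2' \to \R\T_2' \to \C_1\L_1\L_2\T_2'[1]$, whose third term vanishes because $\L_2\T_2' \simeq 0$ (obtained by applying $\L_2$ to $\T_2' \to \id \to \F_2\L_2$ and using that $\L_2\eta_{\L_2}$ is an isomorphism, by \autoref{lem:exc-Leta-iso}). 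Hence $\Q\T_2' \simeq \R\T_2' \simeq \R_1\P_2$.
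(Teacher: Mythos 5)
Your proposal is correct and follows essentially the same route as the paper: the heart of both arguments is the factorisation $\phi \simeq (\phi_1\L_2)\circ(\R_1\phi_2)$ (verified, as you indicate, via the composite-unit identity and the compatibility of $\gamma_\R$ with $\gamma_{\R_1}$ under $\R\F\simeq\R_1\F_1$), followed by the octahedral axiom applied to this composition. Your derivation of $\R_1\P_2\simeq\Q\T_2'$ from $\L_2\T_2'=0$ applied to the defining triangle of $\Q$ is a slightly more direct shortcut than the paper's (which precomposes the main triangle with $\T_2'$ and uses $\Q_1\L_2\T_2'\simeq\Q_1\C_2'\L_2[-2]=0$ together with $\T_2'{}^2\simeq\T_2'$), but the substance is the same.
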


\begin{proof}
We start with the following diagram of triangles, which compares $\Q=\Q_\F$ and $\Q_1 = \Q_{\F_1}$:
\[
\begin{tikzcd}
\Q \ar[r] & \R_1\R_2 = \R \ar[r] \ar[d, "\R_1 \phi_2"'] \ar[dr, phantom, "(\ast)"] & \C\L[1] \ar[d, "{c\L[1]}", "\wr"'] \\
\Q_1 \L_2 \ar[r] & \R_1\L_2 \ar[r] & \C_1\L_1\L_2[1]
\end{tikzcd}
\]
where $c \colon \C \to \C_1$ is the isomorphism of \autoref{prop:composition-cotwists-twists} as $\F_2$ is exceptional.
We focus on the square $(\ast)$, 
which we expand a bit:
\[
\begin{tikzcd}[column sep=large]
\R_1\R_2 \ar[r, "\R_1\R_2 \eta_\L"] \ar[d, "\R_1\R_2 \eta_{\L_2}"'] & \R_1\R_2\F_2\F_1\L_1\L_2 \ar[d, "\R_1 \eta_{\R_2}^{-1} \F_1 \L_1 \eta_{\R_2} \L_2", "\wr"'] \ar[r, "\gamma_\R \L"] & \C\L[1] \ar[dd, "{c \L[1]}", "\wr"'] \\
\R_1\R_2\F_2\L_2 \ar[d, "\wr", "\R_1 \eta_{\R_2}^{-1}\L_2"'] & \R_1\F_1\L_1\R_2\F_2\L_2 \ar[d, "\R_1\F_1\L_1 \eta_{\R_2}^{-1}\L_2", "\wr"']\\
\R_1\L_2 \ar[r, "\R_1 \eta_{\L_1} \L_2"'] & \R_1\F_1\L_1\L_2 \ar[r, "\gamma_{\R_1} \L_1 \L_2"'] & \C_1\L_1\L_2[1]
\end{tikzcd}
\]
Here the left diagram commutes as it is the composition of adjoints, see \cite[Thm. IV.8.1]{maclane1971categories}.
The commutativity of the right diagram follows from the octahedron axiom as in the left diagram in the proof of \autoref{prop:composition-cotwists-twists}.
This shows that the square $(\ast)$ commutes, so with another application of the octahedron axiom we arrive at
\[
\begin{tikzcd}
\R_1 \P_2 \ar[r, equal] \ar[d] & \R_1 \R_2 \T_2' \ar[d] \\
\Q \ar[r] \ar[d] & \R_1\R_2 = \R \ar[r] \ar[d]  & \C\L[1] \ar[d, equal] \\
\Q_1 \L_2 \ar[r] & \R_1\L_2 \ar[r] & \C_1\L_1\L_2[1]
\end{tikzcd}
\]

Now precompose the obtained triangle with $\F_2$:
\[
\R_1 \P_2 \F_2 \to \Q \F_2 \to \Q_1 \L_2 \F_2.
\]
As $\R_1 \P_2 \F_2 \simeq \R \T_2' \F_2 \simeq \R \F_2\C_2'[-2] = 0$, we get therefore $\Q \F_2 \simeq \Q_1 \L_2 \F_2 \simeq \Q_1$ as $\F_2$ is exceptional.
Similarly precomposing with $\T'_2$ yields the triangle:
\[
\R_1\P_2\T'_2 \to \Q\T'_2 \to \Q_1\L_2\T'_2
\]
As $\Q_1\L_2\T'_2 \simeq \Q_1 \C'_2 \L_2[-2] = 0$, we hence get
$\Q\T'_2 \simeq \R_1\P_2\T'_2 \simeq \R\T'_2{}^2 \simeq \R\T'_2$.

Finally note that $\F_2(\ker \Q_1) = \{ \F_2 B \mid \Q_1(B)=0\}$, hence for such an $\F_2 B$ holds $\Q \F_2 B = \Q_1 B = 0$, as well.
\end{proof}

\begin{theorem}
\label{prop:sph-frb}
Let $F_1 \colon \cA \to \cB$ be a spherical functor and $\F_2 \colon \cB \to \cC$ be an exceptional functor.
Then the spherical \cod of the spherelike functor $\F = \F_2\F_1$ has the semiorthogonal decomposition
\[
\Sph \F = \sod{\im\F_2,\ker\L_2 \cap \ker\R}
\]
and for $A$ in $\cA$ its spherical \nbhd is
\[
\SphO \F A = \sod{\im\F_2,\ker\L_2 \cap \F A^\perp} = \FrbO{\F_2}{\F_1 A}
\]
\end{theorem}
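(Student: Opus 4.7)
The plan is to exploit Proposition \ref{prop:compare-asphs} together with the exceptional semiorthogonal decomposition $\cC = \sod{\im\F_2,\ker\L_2}$, and to identify the local version of the spherical \nbhd with a Frobenius \nbhd via Theorem \ref{prop:weaksod}.

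First I would extract from Proposition \ref{prop:compare-asphs}, applied to the spherical $\F_1$ (so $\Q_1=0$), two facts: that $\Q\F_2 \simeq \Q_1 = 0$, giving $\im\F_2 \subset \ker\Q = \Sph\F$; and that $\Q \simeq \R_1\P_2$. Combining this with $\P_2 \simeq \R_2\T_2'$ from Lemma \ref{cor:two-naturals} (exceptional theory applied to $\F_2$) yields the key identification $\Q \simeq \R\T_2'$, which bridges the spherelike and exceptional worlds.

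Second I would establish the semiorthogonal decomposition. For the inclusion $\sod{\im\F_2,\ker\L_2 \cap \ker\R} \subset \Sph\F$, it suffices to note that if $D \in \ker\L_2$ then the triangle $\P_2 D \to \R_2 D \to \L_2 D$ collapses to $\P_2 D \simeq \R_2 D$, so $\Q D \simeq \R D$, which vanishes on $\ker\R$. For the reverse inclusion and the decomposition triangle, I would take $D \in \Sph\F$ and apply the exceptional decomposition $\T_2' D \to D \to \F_2\L_2 D$; since $\F_2\L_2 D \in \im\F_2 \subset \Sph\F$, the cocone $\T_2' D$ lies in $\Sph\F \cap \ker\L_2$, and the identification $\R\T_2' \simeq \Q$ then forces $\T_2' D \in \ker\R$. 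Semiorthogonality of the two pieces is automatic from $\ker\L_2 = \lorth\im\F_2$.

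For the local statement, I would run the same argument with the vanishing condition $\Hom^*(A,\Q(\blank))$. Iterated adjunction gives
\[
\Hom^*(A,\Q D) \simeq \Hom^*(A,\R_1\P_2 D) \simeq \Hom^*(\F_1 A,\R_2\T_2' D) \simeq \Hom^*(\F A,\T_2' D),
\]
so the same decomposition argument identifies $\SphO\F A = \sod{\im\F_2,\ker\L_2 \cap (\F A)^\perp}$, and this is exactly $\FrbO{\F_2}{\F_1 A}$ by Theorem \ref{prop:weaksod}. The main obstacle I anticipate is making sure that the isomorphism $\Q \simeq \R_1\P_2$ furnished by Proposition \ref{prop:compare-asphs} is induced by the natural comparison maps, so that replacing $\Q$ by $\R\T_2'$ in Hom-spaces gives an honest natural isomorphism rather than an abstract one; this is what allows the decomposition of $D \in \SphO\F A$ to split cleanly between $\im\F_2$ and $\ker\L_2 \cap (\F A)^\perp$.
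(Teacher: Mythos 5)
Your proposal is correct and follows essentially the same route as the paper: both hinge on \autoref{prop:compare-asphs} giving the natural isomorphism $\Q \simeq \R_1\P_2$ once $\Q_1=0$, and on converting the vanishing of $\Hom^*(A,\Q(\blank))$ into the defining condition of $\FrbO{\F_2}{\F_1 A}$ by adjunction, after which \autoref{prop:weaksod} supplies the semiorthogonal decompositions. The only difference is organisational — the paper deduces the global statement by intersecting the local ones over all $A$ via Yoneda, whereas you prove the global decomposition directly with a cone argument — and your worry about naturality is already settled because \autoref{prop:compare-asphs} produces a triangle of natural transformations.
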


\begin{proof}
By assumption $\F_1$ is spherical, so $\Q_1=0$.
Therefore the triangle of \autoref{prop:compare-asphs} becomes an isomorphism
$\R_1 \P_2 \xra\sim \Q$.
In particular, we get
\[
\Sph \F = \ker\Q = \ker\R_1\P_2
\]
and unraveling this with Yoneda and using \autoref{prop:weaksod}
\begin{align*}
\ker\R_1\P_2 
&= \{C \in \cC \mid \forall A \in \cA\colon \Hom^*(A,\R_1\P_2 C) = 0 \} \\
&= \{C \in \cC \mid \forall A \in \cA\colon \Hom^*(\F_1 A,\P_2 C) = 0 \} \\
&= \bigcap_{A \in \cA} \{ C \in \cC \mid \Hom^*(\F_1 A,\P_2 C) = 0 \} \\
&= \bigcap_{B \in \im\F_1} \{ C \in \cC \mid \Hom^*(B,\P_2 C) = 0 \} \\
&= \bigcap_{B \in \im\F_1} \FrbO{\F_2}{B} \\
&= \bigcap_{B \in \im\F_1} \sod{\im\F_2,\ker\L_2 \cap \F_2 B^\perp} \\
&= \sod{\im\F_2,\ker\L_2 \cap \bigcap_{B \in \im\F_1} \F_2 B^\perp} \\
&= \sod{\im\F_2,\ker\L_2 \cap \im\F^\perp} \\
&= \sod{\im\F_2,\ker\L_2 \cap \ker\R}.
\end{align*}
Implicit in this chain of equalities we have
\[
\SphO \F A = \FrbO{\F_2}{\F_1 A} = \sod{\im\F_2,\ker\L_2 \cap \F A^\perp}. \qedhere
\]
\end{proof}

\begin{remark}
\label{rem:sph-poset}
Similar to the the case of exceptional functors, we can also define the \emph{spherical poset} $\Sphposet \F$ of a spherelike functor $\F$:
\[
\Sphposet\F \coloneqq \{ \SphO{\F}A \mid A \in \cA \}.
\]
ordered by inclusion.

The proposition above shows that if $\F = \F_2\F_1$ with $\F_1$ spherical and $\F_2$ exceptional, then we have an inclusion of posets:
\[
\Sphposet\F \subseteq \Frbposet{\F_2}.
\]
\end{remark}

\begin{example}
\label{ex:segal}
Let $\cC = \sod{\cB, \lorth\cB}$ be a semiorthogonal decomposition and let $\T_1\colon \cB \to \cB$ be an autoequivalence.
Then by \cite{segal2016all}, there is a spherical functor $\F_1 \colon \cA \to \cB$ with $\T_1$ as its associated twist.

By \autoref{prop:composition-cotwists-twists}, the composition $\F \colon \cA \to \cC$ of $\F_1$ with $\F_2\colon\cB \to \cC$ is a spherelike functor, whose twist $\T$ restricts to $\T_1$ on $\cB$ and the identity on $\lorth\cB$.
\end{example}

\subsection{Comparison to spherical subcategories}
\label{sec:spherelike-compare}
This article generalises results from \cite{hochenegger2016spherical, hochenegger2019spherical,hochenegger2016rigid} about \emph{spherical subcategories}.
In this section we show how these results fit into the language of exceptional and spherelike functors.

We first recall the central notions and results from \cite{hochenegger2016spherical}.
To simplify some arguments, we will assume that $\cD$ has a Serre functor.
Given a $d$-spherelike object $A$ in a triangulated category $\cD$,
then there is a canonical map $A \to \S_\cD A[-d]$ which can be completed to  the \emph{asphericity triangle}
\begin{equation}
\label{eq:asphericity}
A \to \S_\cD A[-d] \to Q_A.
\end{equation}
The \emph{spherical subcategory} of $A$ in $\cD$ is then
\[
\cD_A \coloneqq \lorth Q_A
\]
and the main result is the following.

\begin{proposition}[{\cite[Thm.\ 4.4 \& 4.6]{hochenegger2016spherical}}]
The spherical subcategory $\cD_A$ is the maximal full triangulated subcategory of $\cD$ where $A$ is $d$-spherical.
\end{proposition}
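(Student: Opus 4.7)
The plan is to deduce both parts of the maximality statement from the asphericity triangle \eqref{eq:asphericity}. I split the proof into (i) showing $A$ is $d$-spherical inside $\cD_A$ and (ii) showing any full triangulated subcategory $\cC \subset \cD$ containing $A$ on which $A$ is $d$-spherical is contained in $\cD_A$. Note that the $d$-spherelike condition $\Hom^*(A,A) \simeq \field[t]/t^2$ is intrinsic to $A$ and is therefore automatic in any full subcategory containing $A$; the only real content at each step is the Calabi--Yau part.

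For (i), I would first verify $A \in \cD_A = \lorth Q_A$. Applying $\Hom^*(A, -)$ to \eqref{eq:asphericity} and using Serre duality to compute $\Hom^i(A, \S_\cD A[-d]) \simeq \Hom^{d-i}(A, A)^\vee$ yields that both $\Hom^*(A, A)$ and $\Hom^*(A, \S_\cD A[-d])$ equal $\field \oplus \field[-d]$. By construction of the asphericity triangle the canonical map $A \to \S_\cD A[-d]$ is precisely such that the induced maps in degrees $0$ and $d$ are nonzero and hence isomorphisms; therefore $\Hom^*(A, Q_A) = 0$, giving $A \in \cD_A$. To show $A[d]$ is a Serre dual of $A$ in $\cD_A$, I would apply $\Hom^*(B, -)$ to the shifted triangle $A[d] \to \S_\cD A \to Q_A[d]$ for $B \in \cD_A$ and use $\Hom^*(B, Q_A) = 0$ together with Serre duality in $\cD$ to obtain
\[
\Hom^*_{\cD_A}(B, A[d]) \xra\sim \Hom^*_\cD(B, \S_\cD A) \simeq \Hom^*_\cD(A, B)^\vee = \Hom^*_{\cD_A}(A, B)^\vee,
\]
functorially in $B$, so that $A[d]$ represents $\Hom_{\cD_A}(A, -)^\vee$.

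For (ii), fix $B \in \cC$. The $d$-Calabi--Yau hypothesis in $\cC$, combined with Serre duality in $\cD$ and fullness of $\cC \hookrightarrow \cD$, yields natural isomorphisms
\[
\Hom_\cC(B, A[d]) \simeq \Hom_\cC(A, B)^\vee = \Hom_\cD(A, B)^\vee \simeq \Hom_\cD(B, \S_\cD A).
\]
The step that requires care — and what I expect to be the main obstacle — is arguing that this composite isomorphism is induced by the canonical morphism $A[d] \to \S_\cD A$ from the asphericity triangle. By Yoneda this reduces to a compatibility check between two traces $\Hom^d(A, A) \to \field$, one arising from Serre duality in $\cD$ (implicit in the construction of the asphericity triangle) and the other from the Calabi--Yau structure on $\cC$. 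Both are nonzero functionals on the one-dimensional space $\Hom^d(A, A)$, and once their agreement up to nonzero scalar is established, the long exact sequence of the asphericity triangle applied to $\Hom(B, -)$ forces $\Hom^*(B, Q_A) = 0$, hence $B \in \lorth Q_A = \cD_A$, as required.
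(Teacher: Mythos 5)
Your proof is correct, but it is not the route this paper takes: the proposition is only recalled here from \cite{hochenegger2016spherical}, and the paper's own derivation of it goes through the general machinery — \autoref{prop:sphsubcat-sphnbhd-coincide} identifies $\cD_A$ with the spherical \cod $\Sph{\F_A}$ of the spherelike functor $\F_A\colon\Db(\field\mod)\to\cD$ and identifies the asphericity triangle with the triangle of \autoref{prop:spherelike-serre-Q}, so that both the Calabi--Yau property of $A$ on $\cD_A$ and the maximality follow from \autoref{prop:sph}. Your direct, object-level argument is essentially the original proof in the cited reference; it is self-contained, whereas the paper's approach trades that for generality, since it applies to arbitrary spherelike functors rather than only to $V^\bullet\mapsto V^\bullet\otimes A$.

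The two steps you flag as delicate do close, and in exactly the way you anticipate, so neither is a genuine gap — but both deserve one explicit line. In (i), the nonvanishing of the degree-$d$ component of $\Hom^*(A,w)$ is not literally ``by construction'': it holds because the Serre pairing $\Hom(A,A[d])\times\Hom(A[d],\S_\cD A)\to\field$ is perfect and $w[d]\neq 0$, whence $w[d]\circ t\neq 0$ for the generator $t$ of $\Hom^d(A,A)$. In (ii), by Yoneda the given natural isomorphism $\Hom_\cC(\blank,A[d])\xra{\sim}\Hom_\cD(\blank,\S_\cD A)$ on $\cC$ is post-composition with some $v\in\Hom(A[d],\S_\cD A)\cong\Hom^d(A,A)^\vee\cong\field$; since the isomorphism is nonzero on $\Hom(A,A[d])\cong\field$ we get $v\neq 0$, so $v$ is a nonzero scalar multiple of $w[d]$, post-composition with $w[d]$ is therefore an isomorphism in every degree for every $B\in\cC$, and the long exact sequence of the asphericity triangle yields $\Hom^*(B,Q_A)=0$, i.e.\ $\cC\subset\lorth Q_A=\cD_A$.
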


To translate this result, note that a $d$-spherelike $A$ defines the spherelike functor $\F_A \colon \Db(\field\mod) \to \cD$, see \autoref{ex:spherelike-object}.

\begin{proposition}
\label{prop:sphsubcat-sphnbhd-coincide}
The spherical subcategory $\cD_A$ of $A$ and the spherical \cod $\Sph{\F_A}$ of $\F A$ coincide.
\end{proposition}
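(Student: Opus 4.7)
The plan is to identify both $\cD_A$ and $\Sph{\F_A}$ with the left orthogonal of the same object, namely $\Q^r(\field)$, by recognising the asphericity triangle as a shift of the natural triangle provided by \autoref{prop:spherelike-serre-Q}.

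First, I would specialise \autoref{prop:spherelike-serre-Q} to the spherelike functor $\F_A \colon \Db(\field\mod) \to \cD$ from \autoref{ex:spherelike-object}. The source $\Db(\field\mod)$ has Serre functor $\S_\cA = \id$, and the cotwist is $\C = [-d-1]$: indeed, $\R_A\F_A \simeq \Hom^*(A,A) \otimes (\blank) \simeq \id \oplus [-d]$, and the triangle $\C \to \id \to \R_A\F_A$ forces $\C = [-d-1]$. Consequently $\F\S_\cA\C^{-1}[-1] = \F_A[d]$, so evaluating the natural triangle $\F\S_\cA\C^{-1}[-1] \to \S_\cB\F \to \Q^r\S_\cA$ at $\field$ yields
\[
A[d] \longrightarrow \S_\cD A \longrightarrow \Q^r(\field),
\]
which, after shifting by $[-d]$, takes the form $A \to \S_\cD A[-d] \to \Q^r(\field)[-d]$.

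Next I would compare this with the asphericity triangle \eqref{eq:asphericity}. Since $A$ is $d$-spherelike, $\Hom(A, \S_\cD A[-d]) \simeq \Hom^d(A,A)^\vee$ is one-dimensional, so both connecting maps are proportional. To show the adjunction-theoretic map $A[d] \to \S_\cD A$ is nonzero, note that $A \in \im\F_A \subset \Sph{\F_A} \subset \SphO{\F_A}{\field} = \lorth \Q^r(\field)$, by \autoref{prop:sph} and \autoref{prop:spherelike-serre-Q}; if the map vanished, then $\Q^r(\field)$ would split as $\S_\cD A \oplus A[d+1]$, so $A \in \lorth A[d+1]$ would give $\Hom(A, A[d]) = 0$, contradicting the $d$-spherelike condition. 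Hence the two triangles agree up to isomorphism and $Q_A \simeq \Q^r(\field)[-d]$.

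Finally, since left orthogonals are shift-invariant, $\cD_A = \lorth Q_A = \lorth \Q^r(\field) = \SphO{\F_A}{\field}$, and because $\field$ is a weak generator of $\Db(\field\mod)$, \autoref{rem:spherical-nbhd-finitely} yields $\SphO{\F_A}{\field} = \Sph{\F_A}$, completing the identification. The main subtlety is the comparison of the two connecting maps; I resolve this above by a simple orthogonality argument, although a more conceptual route would trace the adjunction-theoretic map through the unit--counit data and identify it directly with the Serre dual of the canonical generator of $\Hom^d(A,A)$.
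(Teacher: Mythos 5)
Your proposal is correct and follows essentially the same route as the paper: specialise \autoref{prop:spherelike-serre-Q} to $\F_A$, compute $\S_\cA=\id$ and $\C=[-d-1]$ to recover the asphericity triangle up to shift, and then pass to left orthogonals, using that $\field$ is a weak generator of $\Db(\field\mod)$. Your only addition is the explicit verification that the two connecting maps $A\to\S_\cD A[-d]$ agree up to scalar (via one-dimensionality of the Hom space and the non-splitting argument), a point the paper's proof leaves implicit when asserting $Q_A\cong\Q^r(\field)[-d]$.
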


\begin{proof}
We set $\cA \coloneqq \Db(\field\mod)$ and $\F \coloneqq \F_A \colon \cA \to \cD$.
This proposition follows already from maximality, see \cite[Thm.\ 4.6]{hochenegger2016spherical} and \autoref{prop:sph}. We show here a bit more, namely that the triangle
\[
\F \S_\cA \C^{-1}[-1] \to \S_\cD \F \to \Q^r \S_\cA
\]
of \autoref{prop:spherelike-serre-Q} is essentially the asphericity triangle \eqref{eq:asphericity}.
Note that $\S_\cA = \id_\cA$ and $\C = [-d-1]$, so the triangle simplifies to
\[
\F [d] \to \S_\cD \F \to \Q^r.
\]
Now applying this triangle to the (strong) generator $\field \in \cA$, we get after shifting with $[-d]$:
\[
A \to \S_\cD A[-d] \to \Q^r A[-d]
\]
since $\F \field = A$.
In particular, we conclude that $Q_A \cong \Q^r A [-d]$.
Hence we get that 
\[
\cD_A = \lorth Q_A = \lorth \Q^r A = \bigcap_{V \in \cA} \lorth \Q^r \F V = \bigcap_{V \in \cA} \SphO{\F}V = \Sph \F. \qedhere
\]
\end{proof}

The next proposition is about comparing \cite[Thm.\ 4.7]{hochenegger2016spherical} and \autoref{prop:sph-frb}.

\begin{proposition}
\label{prop:sphsubcat-frbnbhd}
Let $A \in \cC$ be a spherical object, and $\iota \colon \cC \to \cD$ be an exceptional functor.
Then 
\[
\cD_{\iota A} = \sod{(\iota \cC)^\perp \cap \lorth \iota A, \iota \cC} = \FrbO{\iota}{A}.
\]
\end{proposition}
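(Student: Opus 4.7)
The plan is to realise $A \in \cC$ via a spherical functor from $\Db(\field\mod)$ and then exploit \autoref{prop:sph-frb}. Since $A$ is spherical, it is $d$-spherelike and $d$-Calabi-Yau, so \autoref{ex:spherelike-object} supplies the spherical functor $\F_A \colon \Db(\field\mod) \to \cC$, $V \mapsto V \otimes A$. Because $\iota$ is fully faithful, $\iota A$ remains $d$-spherelike in $\cD$, and $\F_{\iota A} \cong \iota \circ \F_A$ is spherelike; this also follows from \autoref{prop:composition-cotwists-twists}.

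The identification $\cD_{\iota A} = \FrbO{\iota}{A}$ would then be a short chain of rewrites. First, \autoref{prop:sphsubcat-sphnbhd-coincide} gives $\cD_{\iota A} = \Sph{\F_{\iota A}} = \Sph{\iota \F_A}$. Since $\field$ is a strong generator of $\Db(\field\mod)$, \autoref{rem:spherical-nbhd-finitely} collapses the defining intersection to the single neighbourhood $\Sph{\iota \F_A} = \SphO{\iota \F_A}{\field}$. Finally, \autoref{prop:sph-frb}, applied with spherical $\F_1 = \F_A$ and exceptional $\F_2 = \iota$, converts this into $\FrbO{\iota}{\F_A \field} = \FrbO{\iota}{A}$, and simultaneously delivers the semiorthogonal decomposition $\sod{\iota \cC, \lorth \iota \cC \cap (\iota A)^\perp}$.

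To match this to the form $\sod{(\iota \cC)^\perp \cap \lorth \iota A, \iota \cC}$ appearing in the statement, I would invoke the dual Frobenius neighbourhood of \autoref{sec:frobenius-object-dual}. Item~(4) there reads $\FrbOd{\iota}{A} = \sod{\ker \R \cap \lorth \iota A, \im \iota}$, which for exceptional $\iota$ becomes $\sod{(\iota \cC)^\perp \cap \lorth \iota A, \iota \cC}$ using $\ker \R = (\iota \cC)^\perp$. Because $A$ is $d$-Calabi-Yau, $\S_\cC^{-1} A \cong A[-d]$, so item~(3) together with the manifest shift-invariance of Frobenius neighbourhoods gives $\FrbOd{\iota}{A} = \FrbO{\iota}{\S_\cC^{-1} A} = \FrbO{\iota}{A}$, finishing the proof.

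The main obstacle I anticipate is precisely this last identification. If one prefers to bypass the only sketched dual section, the direct route uses $\FrbO{\iota}{A} = \lorth \T \S_\cD \iota A$ from \autoref{prop:one-natural-serre} together with the triangle $\iota A[d] \to \S_\cD \iota A \to \T \S_\cD \iota A$ coming from $\S_\cC A = A[d]$; for $D \in (\iota \cC)^\perp$ the middle term is killed by Serre duality on $\cD$, and one reads off $\FrbO{\iota}{A} \cap (\iota \cC)^\perp = (\iota \cC)^\perp \cap \lorth \iota A$, which produces the desired SOD directly.
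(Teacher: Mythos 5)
Your proposal is correct, and for the central identity $\cD_{\iota A} = \FrbO{\iota}{A}$ it follows exactly the paper's route: realise $A$ as $\F_A\field$ for the spherical functor $\F_A\colon\Db(\field\mod)\to\cC$, apply \autoref{prop:sphsubcat-sphnbhd-coincide} to get $\cD_{\iota A}=\Sph{\iota\F_A}$, collapse to $\SphO{\iota\F_A}{\field}$ because $\field$ generates, and finish with \autoref{prop:sph-frb}. Where you diverge is the first equality, i.e.\ the specific presentation $\sod{(\iota\cC)^\perp\cap\lorth\iota A,\,\iota\cC}$: the paper simply cites \cite[Thm.~4.7]{hochenegger2016spherical} for it, whereas you derive it internally, either from the dual Frobenius neighbourhood (items (3) and (4) of \autoref{sec:frobenius-object-dual}, using that $A$ is Calabi--Yau so $\FrbOd{\iota}{A}=\FrbO{\iota}{\S_\cC^{-1}A}=\FrbO{\iota}{A}$) or directly from $\FrbO{\iota}{A}=\lorth\T\S_\cD\iota A$ and the triangle $\iota A[d]\to\S_\cD\iota A\to\T\S_\cD\iota A$. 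This makes your argument more self-contained, at the mild cost of relying on the dual section whose proofs the paper leaves as an exercise; your fallback direct route avoids even that, though to actually produce the semiorthogonal decomposition (rather than just the identification of $\FrbO{\iota}{A}\cap(\iota\cC)^\perp$) you should add the standard closing step: for $B\in\FrbO{\iota}{A}$ the decomposition triangle of $\cD=\sod{\ker\R,\im\iota}$ has both outer terms in $\FrbO{\iota}{A}$, so its $\ker\R$-component lands in $(\iota\cC)^\perp\cap\lorth\iota A$, exactly as in the proof of \autoref{prop:weaksod}.
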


\begin{proof}
The first equality is just the statement of \cite[Thm.\ 4.7]{hochenegger2016spherical}.
By \autoref{prop:sphsubcat-sphnbhd-coincide}, we obtain that $\cD_{\iota A} = \Sph{\iota \F_A}$ where $\F_A \colon \Db(\field\mod) \to \cC, \field \mapsto A$.
As $\Db(\field\mod)$ are just graded vector spaces, we get
\[
\Sph{\iota \F_A} = \SphO{\iota \F_A}{\field} = \FrbO{\iota}{\F_A \field} = \FrbO{\iota}{A}
\]
where we use \autoref{prop:sph-frb} in the middle.
\end{proof}

\begin{remark}
Most examples of spherelike objects in \cite{hochenegger2016spherical, hochenegger2019spherical} are of the shape: spherical object $A \in \cC$ embbeded by an exceptional functor $\iota \colon \cC \to \cD$.
So the spherical subcategory of $\iota A$ in $\cD$ is actually the Frobenius \nbhd of $A$ under $\iota$.
In particular, the spherical subcategory of $\iota A$ becomes part of the Frobenius poset of $\iota$, which sometimes has a richer structure.
\end{remark}

\subsubsection{Geometric examples}

\begin{example}[{\cite[\Sec 5.3]{hochenegger2016spherical}}]
Let $\pi \colon X \to C$ be a ruled surface, where $C$ is a smooth, projective curve. There is a section $C_0 \subset X$, which allows us to write $X = \bbP_C(V)$ with $V \coloneqq \pi_* \cO_X(C_0)$.
Then for a spherical object $S \in \Db(C)$ we obtain
\[
\cD_{\pi^* S} = \sod{\pi^*( \lorth (S \otimes V)) \otimes \cO_X(-C_0), \pi^* \Db(C)}.
\]
In particular for the sperical $S = \sky{P}$ with $P \in C$ a point, we get
\[
\cD_{\pi^* \sky{P}} = \sod{ \pi^* \Db_{U}(C) \otimes \cO_X(-C_0), \pi^* \Db(C)}
\]
where $\Db_{U}(C)$ is the subcategory of objects of $\Db(C)$ supported on $U = C \setminus \{P\}$.

Since $\cD_{\pi^* S} = \FrbO{\pi^*}{S}$ by \autoref{prop:sphsubcat-frbnbhd}, there is no need to restrict only to spherelike objects. Hence this example becomes a special case of the Frobenius \nbhds calculated in \autoref{sec:bundle}, see \autoref{rem:p1bundle} there.
\end{example}

\begin{example}[{\cite[\Sec 5.2]{hochenegger2016spherical}}]
\label{ex:hkp-blowup}
Let $\pi \colon \tiX \to X$ be the blowup of a smooth projective variety in a point $P$.
\cite[Prop.\ 5.2]{hochenegger2016spherical} states that if $S \in \Db(X)$ is spherical with $P \in \Supp(S)$ then $\Db(X)_{\pi^* S} = \pi^* \Db(X)$.

In light of the calculation in \autoref{sec:blowup}, this turns out to be \emph{wrong} as soon as $\dim(X) > 2$: 
in this case, 
\[
\Db(X)_{\pi^* S} = \FrbO{\pi^*}{S} \supset \Frb{\pi^*} = \pi^* \Db(X) \oplus \ker \pi_* \cap \ker \pi_!
\]
where $\ker \pi_* \cap \ker \pi_!$ is non-zero for $\dim(X)>2$, see \autoref{prop:blowup-highcodim}.
In the proof of \cite[Prop.\ 5.2]{hochenegger2016spherical}, it was shown that the $\cO_E(-k)$ do not lie inside $\Db(X)_{\pi^* S}$ for $k=1,\ldots,\codim_X(Z)-1$, where $E$ is the exceptional divisor.
But this does not imply that the subcategory generated by these objects has non-zero intersection with $\Db(X)_{\pi^* S}$.
Only in the case of a single exceptional object (that is, if $X$ is a surface) such a conclusion is true.
For higher dimensional $X$, the proof of \autoref{prop:blowup-highcodim} shows that $i_* \Omega_q^k(k) \in \Db(X)_{\pi^* S}$ for $k=1,\ldots,\codim_X(Z)-2$.
Therefore, \cite[Prop.\ 5.2]{hochenegger2016spherical} is only valid for blowing up a point on a surface.
\end{example}

Unfortunately, the mistake in the proof has consequences for \cite[Cor.\ 5.3 \& Prop. 5.5]{hochenegger2016spherical} about iterated blowups.
It turns out that the statements there are even wrong for iterated blowups on surfaces, the reason is again that the orthogonal of $\pi^* \Db(X)$ is generated by more than one object.
This problem appears already when blowing up twice, see \cite[Prop.\ 5.5]{hochenegger2016rigid}. Again, even though the proposition there is about the pullback of a spherical object, it can be easily generalised to the following statement about Frobenius \nbhds.

\begin{example}[{c.f.\ \cite[Prop.\ 5.5]{hochenegger2016rigid}}]
Let $X$ be a smooth projective surface. Let $\pi \colon \tiX \to X$ be the composition of a blowup in a point $P$ and a second blowup in a point on the exceptional divisor of the first blowup. Then the exceptional locus of $\pi$ consists of a $(-2)$-curve $C$ and a $(-1)$-curve $E$ which meet transversally in a point. 
For $A \in \Db(X)$, the Frobenius \nbhd under $\pi^*$ is then given by
\[
\FrbO{\pi^*}A = 
\begin{cases}
\Db(\tiX) & \text{if $P \not\in \Supp(A)$;}\\
\pi^* \Db(X) \oplus \sod{\cO_C(-1)} & \text{if $P \in \Supp(A)$.}
\end{cases}
\]
Note that $\sod{\cO_C(-1)} \subset \Db(\tiX)$ is not admissible, as $\cO_C(-1)$ is spherical.
\end{example}

\begin{remark}
In \cite[\Sec 5.5]{Krug2018derived}, Calabi--Yau neighbourhoods are introduced as a generalisation of spherical subcategories. We believe that with a suitable exceptional functor, they can be written as Frobenius neighbourhoods. In particular, the Calabi--Yau property there does not seem necessary. For example, we think that in \cite[Prop. 5.15]{Krug2018derived}, $Y$ can be any projective variety with rational Gorenstein singularities and there is no need for a trivial canonical bundle.
\end{remark}

\subsubsection{Algebraic examples}

In \cite{hochenegger2019spherical}, some examples from representation theory of finite dimensional algebras are treated. There, two constructions are presented -- \emph{insertion} and \emph{tacking} -- which attaches to an algebra $\Lambda$ a quiver $\Gamma$ without oriented loops, yielding a new algebra $\Lambda'$ and an exceptional functor
\[
\jmath \colon \Db(\Lambda\mod) \to \Db(\Lambda'\mod).
\]
As in the geometric examples, the spherical subcategory of $\jmath A$ is computed in $\Db(\Lambda'\mod)$, where $A$ is spherical in $\Db(\Lambda\mod)$.
Since the spherical subcategory is actually a Frobenius \nbhd, we can consider arbitrary objects $A$.

\begin{example}[{c.f.\ \cite[Thm. 3.12 \& 3.18]{hochenegger2019spherical}}]
Let $\Lambda'$ be an algebra which is obtained from $\Lambda$ by tacking on or insertion of a quiver $\Gamma$ without oriented cycles.
Then there is a simple module $S$ in $\Lambda'\mod$ such that the Frobenius \nbhd of $A \in \Db(\Lambda\mod)$ under $\jmath$ is
\[
\FrbO{\F}A =
\begin{cases}
\Db(\Lambda'\mod) & \text{if $\Hom^*(S, \jmath A) = 0$;} \\
\jmath \Db(\Lambda\mod) \oplus \cC & \text{if $\Hom^*(S, \jmath A) = 0$,}
\end{cases}
\]
where $\cC \cong \Db(\field\Gamma'\mod)$ and $\Gamma' \subset \Gamma$ is a subquiver where a single vertex (corresponding to $S$) is removed.
\end{example}

\begin{remark}
The problematic argument of \autoref{ex:hkp-blowup} makes no problems here, as only a single exceptional object (namely $S$) is removed.

We want to highlight that as the simple module $S$ is exceptional, we obtain in particular that $\Frb\jmath = \jmath \Db(\Lambda\mod) \oplus \cC$ is admissible in $\Db(\Lambda'\mod)$.
This is in contrast to geometric examples, where the Frobenius \cod tends to be non-admissible.
\end{remark}

\subsubsection{Posets}

\begin{remark}
In \cite[\Sec 2]{hochenegger2019spherical}, the notion of a spherical poset of $\cD$ is introduced:
it is defined as the poset
\[
\{ \cD_A \mid \text{$A \in \cD$ spherelike} \}.
\]
In contrast, we define the spherical poset in \autoref{rem:sph-poset} as the poset of spherical \nbhds under a fixed spherelike functor. So these two posets will be very different in general and we sincerely hope that this does not cause confusion.
\end{remark}

We want to highlight the last remark by an example.

\begin{example}
By \cite{zube1997enriques,li2019enriques}, there are exceptional line bundles $L_1,\ldots,L_{10}$ on a generic Enriques surface $X$, which are mutually orthogonal, that is $\Hom^*(L_i,L_j) = 0$ for $i\neq j$.
This induces a semiorthogonal decomposition
\[
\Db(X) = \sod{\cA_X,L_1,\ldots,L_{10}}.
\]
By Serre duality, there is a morphism $L_i \to \S_X L_i$ unique up to scalars, which we extend to a triangle
\[
S_i \to L_i \to \S_X L_i.
\]
By \cite[Lem. 3.6 \& Prop. 3.7]{li2019enriques}, these $S_i$ are $3$-spherical objects inside $\cA_X$ and any $3$-spherical object inside $\cA_X$ is isomorphic to a shift of an $S_i$.
Additionally, it was observed in the proof of \cite[Prop. 3.7]{li2019enriques} that $S_i$ fits into the triangle
\[
S_i \to \S_X S_i[-3] \to \S_X L_i \oplus \S_X L_i[-3]
\]
which is the asphericity triangle of $S_i$. 
Therefore the spherical subcategory of $S_i$ is $\lorth \S_X L_i = L_i^\perp = \sod{\cA_X, L_j \mid j \neq i }$.  
In particular, the spherical poset in the sense of \cite{hochenegger2019spherical} of $\Db(X)$ contains
\[
\{ \sod{\cA_X, L_j \mid j \neq i } \}
\]
where any two elements are not comparable.

For a spherelike object $S_i$ the spherical poset of the corresponding spherelike functor $\F_i \colon \Db(\field) \to \Db(X)$ consists of just two elements:
\[
\{\sod{\cA_X, L_j \mid j \neq i },\ \Db(X) \},
\]
where the maximal element is obtained by the zero object, and the minimal one by any non-zero object in $\Db(\field)$.

The richest structure, we obtain by looking at the exceptional functor $\iota \colon \cA_X \to \Db(X)$. The above discussion shows now that
$\FrbO{\iota}{S_i} =  \sod{\cA_X, L_j \mid j \neq i }$.
Using that the $L_i$ are mutually orthogonal, one can check that therefore the Frobenius poset is
\[
\Frbposet \iota = \{ \sod{\cA_X, L_j \in J} \mid J \subset \{1,\ldots,n\} \}.
\]
\end{example}

\subsection{Examples}
In \cite{krug2015enriques}, a functor was called spherelike for the first time:

\begin{example}
Let $X$ be an Enriques surface and $\pi \colon \tilde X \to X$ its canonical cover, so $\tilde X$ is an K3 surface.
Note that $\pi_* \colon \Db(\tilde X) \to \Db(X)$ is a spherical functor, whose cotwist is $\tau^*$ with $\tau$ the deck transformation. 
Let $X^{[n]}$ be the Hilbert scheme of $n$ points on $X$.
As $\cO_X$ is exceptional, the Fourier-Mukai transform $\F \colon \Db(X) \to \Db(X^{[n]})$ associated to the universal ideal sheaf is an exceptional functor.

It was observed in \cite[Rem. 3.7]{krug2015enriques}, that the composition $\F \pi_*$ should be called spherelike functor. And indeed, by \autoref{prop:composition-cotwists-twists}, $\F \pi_*$ is a spherelike functor as the composition of a spherical and exceptional functor.
By \autoref{prop:sph-frb}, we find that 
\[
\SphO{\F\pi_*}A = \FrbO{\F}{\pi_*A}
\]
In particular, as $\pi_*$ is essentially surjective, we obtain that
\[
\Sph{\F\pi_*} = \Frb{\F} = \im\F \oplus (\ker\R \oplus \ker\L)
\]
where $\R$ and $\L$ are the adjoints of $\F$.
\end{example}

\begin{question}
Are there meaningful spherelike functors which are \emph{not} the composition of a spherical and an exceptional functor?
\end{question}

Obviously, the answer to this question depends on the taste of the reader, as the following example shows.

\begin{example}
Let $S$ be a bielliptic surface. Then its structure sheaf $\cO_S$ is a (properly) $1$-spherelike object in $\Db(S)$.
By \cite[Prop. 4.1]{kawatani2015nosods}, $\Db(S)$ admits no nontrivial semiorthogonal decomposition. 
In particular, the spherical subcategory of $\cO_S$ is not admissible. 

Note that a spherical object in the derived category of a $d$-dimensional variety is automatically $d$-Calabi--Yau. 
In contrast, $\cO_S$ is a $1$-spherelike object in the derived category of surface. 
It would be interesting to know, whether the cotwist of a spherical functor between categories of geometric origin is always of a specific shape.
\end{example}

We end with two examples of spherelike objects from \cite{hochenegger2016rigid}.
The first is still given by the inclusion of a spherical object via an exceptional functor into some bigger category. 
The second one is not of this kind, but to us, the second example seems rather a numerical accident than a meaningful example.

\begin{example}
Let $X$ be a surface containing three rational curves $B,E,C$ with the following dual intersection graph:
\raisebox{-0.35ex}{\tikz{
\draw (0,0) to (1.2,0);
\node[curve] (p3) at (0,0) {\tinynum 3};
\node[curve] (p1) at (0.6,0) {\tinynum 1};
\node[curve] (p2) at (1.2,0) {\tinynum 2};
}}%
, so $B^2=-3$, $E^2=-1$ and $C^2=-2$.
Then $\cO_{B+E+C}$ is not the pullback of some spherical object using some birational morphism $\pi \colon X \to Y$. Still, $C$ is a $(-2)$-curve, so $\cO_C(-1)$ is spherical, and actually $\cO_{B+E+C} = \T_{\cO_C(-1)}(\cO_{B+E})$, see \cite[Prop. 4.6]{hochenegger2016rigid}.
So after applying this autoequivalence, $\cO_{B+E}$ becomes contractible to a $(-2)$-curve.
In particular, denoting by $\pi_E \colon X \to Y$ the contraction of $E$, we obtain an exceptional functor
\[
\F \colon \T_{\cO_C(-1)}(\pi^* \Db(Y)) \to \Db(X) = \sod{\T_{\cO(-1)}(\cO_E(-1)), \T_{\cO_C(-1)}(\pi^* \Db(Y))}
\]
and $\cO_{B+E+C}$ becomes the image of a spherical object under this $\F$.
\end{example}

\begin{example}
Let $X$ be a surface containing five rational curves $B,C_1,C_2,E_1,E_2$ with the following dual intersection graph:
\raisebox{-0.35ex}{\tikz{
\draw (0.9,0.5) to (0,0);
\draw (0.9,0.5) to (0.6,0);
\draw (0.9,0.5) to (1.2,0);
\draw (0.9,0.5) to (1.8,0);
\node[curve] (p0) at (0.9,0.5) {\tinynum 3};
\node[curve] (p1) at (0,0) {\tinynum 2};
\node[curve] (p2) at (0.6,0) {\tinynum 2};
\node[curve] (p3) at (1.2,0) {\tinynum 1};
\node[curve] (p4) at (1.8,0) {\tinynum 1};
}}%
, where $B^2=-3$, $C_i^2=-2$ and $E_i^2=-1$.
Consider the divisor $D=2B+C_1+C_2+E_1+E_2$.
Then $\cO_D$ is a spherelike divisor and it seems that it does not arise as the image of any spherical object under an exceptional functor. See  \cite[Ex. 5.11]{hochenegger2016rigid} for further discussion.
\end{example}

\addtocontents{toc}{\protect\setcounter{tocdepth}{-1}}     

\bibliographystyle{alpha}
\bibliography{references}

\begin{thebibliography}{KMM10}

\bibitem[Add16]{addington2011new}
Nicolas Addington.
\newblock New derived symmetries of some hyperk{\"a}hler varieties.
\newblock {\em Alg. Geom.}, 3(2):223--260, 2016.
\newblock Also \arXiv{1112.0487}.

\bibitem[AL17]{anno2013spherical}
Rina Anno and Timothy Logvinenko.
\newblock Spherical {DG}-functors.
\newblock {\em J. Eur. Math. Soc.}, 19:2577--2656, 2017.
\newblock Also \arXiv{1309.5035}.

\bibitem[BK89]{bondal1989representable}
Alexei Bondal and Mikhail Kapranov.
\newblock Representable functors, {S}erre functors, and reconstructions.
\newblock {\em Izv. Akad. Nauk SSSR Ser. Mat.}, 53(6):1183--1205, 1337, 1989.

\bibitem[Bon89]{bondal1989representations}
Alexei Bondal.
\newblock Representations of associative algebras and coherent sheaves.
\newblock {\em Izv. Akad. Nauk SSSR Ser. Mat.}, 53(1):25--44, 1989.

\bibitem[Bon13]{bondal2013tstructures}
Alexei Bondal.
\newblock Operations on {$t$}-structures and perverse coherent sheaves.
\newblock {\em Izv. Ross. Akad. Nauk Ser. Mat.}, 77(4):5--30, 2013.
\newblock Also \arXiv{1308.2549}.

\bibitem[BvdB03]{bondal2003generators}
Alexei Bondal and Michel van~den Bergh.
\newblock Generators and representability of functors in commutative and
  noncommutative geometry.
\newblock {\em Mosc. Math. J.}, 3(1):1--36, 258, 2003.
\newblock Also \arXiv{math/0204218}.

\bibitem[CW10]{caldararu2010mukai}
Andrei C{\u{a}}ld{\u{a}}raru and Simon Willerton.
\newblock The {M}ukai pairing. {I}. {A} categorical approach.
\newblock {\em New York J. Math.}, 16:61--98, 2010.
\newblock Also \arXiv{0707.2052}.

\bibitem[HKP16]{hochenegger2016spherical}
Andreas Hochenegger, Martin Kalck, and David Ploog.
\newblock Spherical subcategories in algebraic geometry.
\newblock {\em Math. Nachr.}, 289(11-12):1450--1465, 2016.
\newblock Also \arXiv{1208.4046}.

\bibitem[HKP19]{hochenegger2019spherical}
Andreas Hochenegger, Martin Kalck, and David Ploog.
\newblock Spherical subcategories in representation theory.
\newblock {\em Math. Z.}, 291(1-2):113--147, 2019.
\newblock Also \arXiv{1502.06838}.

\bibitem[HP20]{hochenegger2016rigid}
Andreas Hochenegger and David Ploog.
\newblock Rigid divisors on surfaces.
\newblock {\em Izv. Math.}, 84(1):146--185, 2020.
\newblock Also \arXiv{1607.08198}.

\bibitem[Huy06]{huybrechts2006fourier}
Daniel Huybrechts.
\newblock {\em Fourier-{M}ukai transforms in algebraic geometry}.
\newblock Oxford Mathematical Monographs. The Clarendon Press, 2006.

\bibitem[Joh02]{johnstone2002sketches}
Peter Johnstone.
\newblock {\em Sketches of an elephant: a topos theory compendium. {V}ol. 1},
  volume~43 of {\em Oxford Logic Guides}.
\newblock The Clarendon Press, 2002.

\bibitem[KM09]{kuznetsov2009atiyah}
Alexander Kuznetsov and Dimirtri Markushevich.
\newblock Symplectic structures on moduli spaces of sheaves via the {A}tiyah
  class.
\newblock {\em J. Geom. Phys.}, 59(7):843--860, 2009.
\newblock Also \arXiv{math/0703264}.

\bibitem[KMM10]{kuznetsov2010abel}
Alexander Kuznetsov, Laurent Manivel, and Dimitri Markushevich.
\newblock Abel-{J}acobi maps for hypersurfaces and noncommutative
  {C}alabi-{Y}au's.
\newblock {\em Commun. Contemp. Math.}, 12(3):373--416, 2010.
\newblock Also \arXiv{0806.1154}.

\bibitem[KO15]{kawatani2015nosods}
Kotaro Kawatani and Shinnosuke Okawa.
\newblock Nonexistence of semiorthogonal decompositions and sections of the
  canonical bundle.
\newblock {\em Preprint \arXiv{1508.00682}}, 2015.

\bibitem[KPS18]{Krug2018derived}
Andreas Krug, David Ploog, and Pawel Sosna.
\newblock Derived categories of resolutions of cyclic quotient singularities.
\newblock {\em Q. J. Math.}, 69(2):509--548, 2018.
\newblock Also \arXiv{1701.01331}.

\bibitem[KS15]{krug2015enriques}
Andreas Krug and Pawel Sosna.
\newblock On the derived category of the {H}ilbert scheme of points on an
  {E}nriques surface.
\newblock {\em Selecta Math. (N.S.)}, 21(4):1339--1360, 2015.
\newblock Also \arXiv{1404.2105}.

\bibitem[KS19]{krause2017projective}
Henning Krause and Greg Stevenson.
\newblock The derived category of the projective line.
\newblock In M.~Baake, F.~G\"otze, and W.~Hoffmann, editors, {\em Spectral
  Structures and Topological Methods in Mathematics}, pages 275--298. European
  Mathematical Society, 2019.
\newblock Also \arXiv{1709.01717}.

\bibitem[Kuz04]{kuznetsov2004threefold}
Alexander Kuznetsov.
\newblock Derived category of a cubic threefold and the variety {$V_{14}$}.
\newblock {\em Tr. Mat. Inst. Steklova}, 246:183--207, 2004.
\newblock Also \arXiv{math/0303037}.

\bibitem[Kuz07]{kuznetsov2007homological}
Alexander Kuznetsov.
\newblock Homological projective duality.
\newblock {\em Publ. Math. Inst. Hautes \'Etudes Sci.}, (105):157--220, 2007.
\newblock Also \arXiv{math/0507292}.

\bibitem[Kuz10]{kuznetsov2010derived}
Alexander Kuznetsov.
\newblock Derived categories of cubic fourfolds.
\newblock In F.~Bogomolov and Y.~Tschinkel, editors, {\em Cohomological and
  geometric approaches to rationality problems}, pages 219--243. Birkh\"auser,
  2010.
\newblock Also \arXiv{0808.3351}.

\bibitem[Kuz11]{kuznetsov2011basechange}
Alexander Kuznetsov.
\newblock Base change for semiorthogonal decompositions.
\newblock {\em Compos. Math.}, 147(3):852--876, 2011.
\newblock Also \arXiv{0711.1734}.

\bibitem[Kuz15]{kuznetsov2015derived}
Alexander Kuznetsov.
\newblock Derived categories view on rationality problems.
\newblock In A.~Beauville, B.~Hassett, A.~Kuznetsov, and A.~Verra, editors,
  {\em Rationality problems in algebraic geometry}, pages 67--104.
  Springer-Verlag, 2015.
\newblock Also \arXiv{1509.09115}.

\bibitem[LNSZ19]{li2019enriques}
Chunyi Li, Howard Nuer, Paolo Stellari, and Xiaolei Zhao.
\newblock A refined derived {T}orelli theorem for {E}nriques surfaces.
\newblock {\em Preprint \arXiv{1912.04332}}, 2019.

\bibitem[Mac71]{maclane1971categories}
Saunders MacLane.
\newblock {\em Categories for the working mathematician}, volume~5 of {\em
  Graduate Texts in Mathematics}.
\newblock Springer-Verlag, 1971.

\bibitem[Mea16]{meachan2016note}
Ciaran Meachan.
\newblock A note on spherical functors.
\newblock {\em Preprint \arXiv{1606.09377}}, 2016.

\bibitem[Orl92]{orlov1992projective}
Dmitri Orlov.
\newblock Projective bundles, monoidal transformations, and derived categories
  of coherent sheaves.
\newblock {\em Izv. Ross. Akad. Nauk Ser. Mat.}, 56(4):852--862, 1992.

\bibitem[Orl09]{orlov2009generators}
Dmitri Orlov.
\newblock Remarks on generators and dimensions of triangulated categories.
\newblock {\em Mosc. Math. J.}, 9(1):153--159, 2009.
\newblock Also \arXiv{0804.1163}.

\bibitem[Seg18]{segal2016all}
Ed~Segal.
\newblock All autoequivalences are spherical twists.
\newblock {\em Int. Math. Res. Not.}, 2018(10):3137--3154, 2018.
\newblock Also \arXiv{1603.06717}.

\bibitem[Zub97]{zube1997enriques}
Severinas Zube.
\newblock Exceptional vector bundles on {E}nriques surfaces.
\newblock {\em Mathematical Notes}, 61:693--699, 1997.
\newblock Also \arXiv{alg-geom/9410026}.

\end{thebibliography}


\end{document}